\documentclass[12pt]{article}
\usepackage[top=2.5cm,bottom=2.5cm,left=2.9cm,right=2.9cm]{geometry}
\usepackage{amssymb}
\usepackage{amsmath,amsthm}
\usepackage{enumerate}
\usepackage{tikz}
\usepackage{verbatim}
\usepackage{upgreek}
 \usepackage{mathptmx}
\usetikzlibrary{calc}




\setlength{\parindent}{0.3in}
\newtheorem{remark}{Remark}[section]

\newtheorem{lemma}[remark]{Lemma}
\newtheorem{theorem}[remark]{Theorem}
\newtheorem{proposition}[remark]{Proposition}

\newtheorem{corollary}[remark]{Corollary}

\title{From (secure) $w$-domination in graphs to protection of lexicographic product graphs}
\author{A. Cabrera Mart\'{\i}nez, A. Estrada-Moreno, J. A. Rodr\'{\i}guez-Vel\'{a}zquez\\
 {\small Universitat Rovira i Virgili }\\{\small Departament d'Enginyeria Inform\`atica i Matem\`atiques } \\  {\small Av. Pa\"{\i}sos Catalans 26, 43007 Tarragona, Spain.} \\{\small
  abel.cabrera\@@urv.cat, alejandro.estrada\@@urv.cat, juanalberto.rodriguez\@@urv.cat}\\
}

\date{ }
\begin{document}
\maketitle

\begin{abstract}
 Let  $w=(w_0,w_1, \dots,w_l)$ be a vector of nonnegative integers such that $ w_0\ge 1$. Let $G$ be a graph and $N(v)$  the open neighbourhood of $v\in V(G)$.  We say that a function $f: V(G)\longrightarrow \{0,1,\dots ,l\}$ is a  $w$-dominating function if  $f(N(v))=\sum_{u\in N(v)}f(u)\ge w_i$ for every vertex $v$ with $f(v)=i$. The weight of $f$ is defined to be $\omega(f)=\sum_{v\in V(G)} f(v)$. 
Given a $w$-dominating function $f$ and any pair of adjacent vertices $v, u\in V(G)$ with $f(v)=0$ and $f(u)>0$, the  function $f_{u\rightarrow v}$ is defined by $f_{u\rightarrow v}(v)=1$, $f_{u\rightarrow v}(u)=f(u)-1$ and $f_{u\rightarrow v}(x)=f(x)$ for every $x\in V(G)\setminus\{u,v\}$. 
We say that a  $w$-dominating function $f$ is a secure  $w$-dominating function if   for every $v$ with $f(v)=0$,  there exists $u\in N(v)$ such that $f(u)>0$ and  $f_{u\rightarrow v}$ is a  $w$-dominating function as well. The  (secure) $w$-domination number of $G$, denoted by ($\gamma_{w}^s(G)$) $\gamma_{w}(G)$, is defined as the minimum weight among all (secure) $w$-dominating functions. \\
  In this paper,  we show how
the secure (total) domination number and the (total) weak  Roman domination number of lexicographic product graphs $G\circ H$ are related to $\gamma_w^s(G)$ or $\gamma_w(G)$. For the case of the secure domination number and the weak Roman domination number, the decision on whether $w$ takes specific components will depend on the value of $\gamma_{(1,0)}^s(H)$,   while in the case of the total version of these parameters, the decision will depend on the value of $\gamma_{(1,1)}^s(H)$. 
\end{abstract}

\noindent {\bf Keywords}: Secure $w$-domination, $w$-domination, weak Roman domination, secure domination, lexicographic product. 

\vspace{0,5cm}
\noindent {\bf MSC2020}:  05C69, 05C76

\section{Introduction}
As usual,  $\mathbb{Z}^+=\{1,2,3,\dots\}$ and $\mathbb{N}=\mathbb{Z}^+\cup\{0\}$ denote the sets  of positive and nonnegative integers, respectively.
Let $G$ be a graph,  $l\in \mathbb{Z}^+$ an integer, and $f: V(G)\longrightarrow \{0,\dots , l\}$ a function. Let $V_i=\{v\in V(G):\; f(v)=i\}$ for every $i\in \{0,\dots , l\}$. We will identify $f$ with the subsets $V_0,\dots,V_l$ associated with it, and so we will use the unified notation $f(V_0,\dots , V_l)$ for the function and these associated subsets.  The \emph{weight} of $f$ is defined as $$\omega(f)=f(V(G))=\sum_{i=1}^l i|V_i| .$$

Let $w=(w_0, \dots,w_l)\in \mathbb{Z}^+\times \mathbb{N}^l$ such that $ w_0\ge 1$. 
As defined in \cite{w-domination}, a function $f(V_0,\dots , V_l)$ is a  $w$-\emph{dominating function} if  $f(N(v))\geq w_i$ for every $v\in V_i$. 
The  $w$-\emph{domination number} of $G$, denoted by $\gamma_{w}(G)$, is the minimum weight among all $w$-dominating functions. For simplicity, a  $w$-dominating function $f$  of weight $\omega(f)=\gamma_{w}(G)$ will be called a $\gamma_{w}(G)$-function. For fundamental results on the $w$-domination number of a graph, we refer the interested readers to \cite{w-domination}; the paper where the theory of  $w$-domination in graphs was introduced.

%
 
For any function $f(V_0,\dots , V_l)$ and any pair of adjacent vertices $v\in V_0$ and $u\in V(G)\setminus V_0$, the  function $f_{u\rightarrow v}$ is defined by $f_{u\rightarrow v}(v)=1$, $f_{u\rightarrow v}(u)=f(u)-1$ and $f_{u\rightarrow v}(x)=f(x)$ whenever $x\in V(G)\setminus\{u,v\}$. 

The authors of this paper \cite{sym12121948} introduced the approach of secure $w$-domination as follows.
A  $w$-dominating function $f(V_0,\dots , V_l)$ is a \emph{secure}  $w$-\emph{dominating function} if   for every $v\in V_0$ there exists $u\in N(v)\setminus V_0$ such that $f_{u\rightarrow v}$ is a  $w$-dominating function as well. The  \emph{secure} $w$-\emph{domination number} of $G$, denoted by $\gamma_{w}^s(G)$, is the minimum weight among all secure $w$-dominating functions. For simplicity, a secure  $w$-dominating function $f$  of weight $\omega(f)=\gamma_{w}^s(G)$ will be called a $\gamma_{w}^s(G)$-function. 
This approach  to the theory of secure domination covers the different versions of secure domination known so far. For instance,
we would emphasize the following cases of known parameters that we define here in terms of secure $w$-domination.

\begin{itemize}
\item The \emph{secure domination number} of $G$ is defined to be $\gamma_s(G)=\gamma_{(1,0)}^s(G)$. In this case, for any secure  $(1,0)$-dominating function $f(V_0,V_1)$, the set  $V_1$ is known as a \emph{secure dominating set}. This concept  was introduced  by Cockayne et al. in \cite{MR2137919}  and studied further in several papers, including among others, \cite{MR3355313,MR2529132,
MR3258160,Cockayne2003,MR2477230,Valveny2018}.

\item The \emph{secure total domination number} of a graph $G$ of minimum degree at least one  is defined to be $\gamma_{st}(G)=\gamma_{(1,1)}^s(G)$. In this case, for any  secure $(1,1)$-dominating function $f(V_0,V_1)$, the set $V_1$ is known as a \emph{secure total dominating set} of $G$.
This concept was introduced  by Benecke et al.\ in \cite{MR2364004} and studied further in several papers, including among others,  \cite{STDS2019,TWRDF2018,MR3624798,MR2477230,MR3773180}.

\item The \emph{weak Roman domination number} of a graph  $G$ is defined to be  $\gamma_{r}(G)=\gamma_{(1,0,0)}^s(G)$. 
 This concept was introduced by Henning and Hedetniemi \cite{MR1991720} and studied further in several papers, including among others, \cite{Cabrera-weak-tree-2020,MR3258160,Cockayne2003,Valveny2017}.

\item The \emph{total weak Roman domination number} of a graph  $G$ of minimum degree at least one is defined to be $\gamma_{tr}(G)=\gamma_{(1,1,1)}^s(G)$. This concept was introduced by Cabrera et al.\  in \cite{TWRDF2018} and studied further in \cite{TPlexicographic-2019}.

\item The \emph{secure Italian domination number} of $G$ is defined to be $\gamma_{_I}^s(G)=\gamma_{(2,0,0)}^s(G)$. This parameter was introduced by Dettlaff et al.\ in \cite{SecureItalian}.
\end{itemize}


 In this paper  we show how
the secure (total) domination number and the (total) weak  Roman domination number of lexicographic product graphs $G\circ H$ are related to $\gamma_w^s(G)$ or $\gamma_w(G)$. For the case of the secure domination number and the weak Roman domination number, the decision on whether $w$ takes specific components will depend on the value of $\gamma_{(1,0)}^s(H)$,   while in the case of the total version of these parameters, the decision will depend on the value of $\gamma_{(1,1)}^s(H)$.

We assume that the reader is familiar with the basic concepts, notation  and terminology of domination in graph. If this is not the case, we suggest the textbooks \cite{Haynes1998,Haynes1998a}.  For the remainder of the paper, definitions will be introduced whenever a concept is needed.

\section{Some tools}\label{NewSecureDomination}

 Given a  $w$-dominating function $f(V_0,\dots , V_l)$ and $v\in V_0$, we define $$M_f(v)=\{u\in V(G)\setminus V_0: \, f_{u\rightarrow v} \text{ is a } w\text{-dominating function}\}.$$
Obviously, if $f$ is a secure  $w$-dominating function, then $M_f(v)\ne \varnothing$ for every $v\in V_0$.

\begin{theorem}{\rm \cite{sym12121948}}\label{PreliminaryBounds-w-Secure}
Let $G$ be a graph of minimum degree $\delta$, and let $w=(w_0,\dots ,w_l)\in \mathbb{Z}^+\times \mathbb{N}^l$ such that  $w_i\ge w_{i+1}$ for every $i\in \{0, \dots , l-1\}$. If $l\delta\ge w_l$, then following statements hold.

\begin{enumerate}
\item[{\rm (i)}] $\gamma_{w}(G)\le \gamma_{w}^s(G).$
\\
\item[{\rm (ii)}] If $k\in \mathbb{Z}^+$, then $\gamma_{(k+1,k=w_1,\dots, w_l)}(G)\leq \gamma_{(k,k=w_1,\dots, w_l)}^s(G)$.
\end{enumerate}
\end{theorem}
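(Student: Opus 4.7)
The plan is to dispatch (i) as an immediate consequence of the definitions and then focus on (ii), where the actual content lies. Before anything, note that the hypothesis $l\delta\ge w_l$ guarantees that the constant assignment $f\equiv l$ is a $w$-dominating function, since $f(N(v))=l\deg(v)\ge l\delta\ge w_l$ and $V_0=\varnothing$ removes any security condition to check. Hence both $\gamma_w(G)$ and $\gamma_w^s(G)$ are finite under the stated conditions.

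For (i), I would simply observe that, by the very definition, every secure $w$-dominating function is in particular a $w$-dominating function; minimizing the weight over the (a priori larger) family of all $w$-dominating functions can only yield something no larger than the minimum over the secure ones.

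The more interesting step is (ii). Here the plan is to take a $\gamma_{(k,k,w_2,\dots,w_l)}^s(G)$-function $f(V_0,V_1,\dots,V_l)$ and show that $f$ itself is already a $(k+1,k,w_2,\dots,w_l)$-dominating function, which would immediately give $\gamma_{(k+1,k,w_2,\dots,w_l)}(G)\le \omega(f)$. Since the only coordinate that differs between the two weight vectors is $w_0$ (it grows from $k$ to $k+1$), the constraints at vertices of $V_i$ with $i\ge 1$ are unchanged and need no verification. The whole argument therefore reduces to proving $f(N(v))\ge k+1$ for every $v\in V_0$. To obtain this, I would invoke the security of $f$: for each such $v$, pick $u\in N(v)\setminus V_0$ with $f_{u\to v}$ a $w$-dominating function, and then evaluate the $w$-dominating condition of $f_{u\to v}$ at $v$. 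Since $v\in V_1$ under $f_{u\to v}$, this condition reads $f_{u\to v}(N(v))\ge w_1=k$; combined with the identity
$$f_{u\to v}(N(v))=(f(u)-1)+\sum_{x\in N(v)\setminus\{u\}}f(x)=f(N(v))-1,$$
valid because $u\in N(v)$ and $v\notin N(v)$, this yields $f(N(v))\ge k+1$, as required.

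The main subtlety, such as it is, is the ``off-by-one'' observation that the security of $f$ encodes the constraint at the shifted vertex $v$ through $w_1$ rather than through $w_0$, precisely because $v$ lands in $V_1$ (not $V_0$) under $f_{u\to v}$. This is what allows the first coordinate of the weight vector to be promoted from $k$ to $k+1$ at no extra cost. The monotonicity $w_i\ge w_{i+1}$ does not enter the verification above; its role is implicit, only to keep the two weight vectors within the class the theory is set up for.
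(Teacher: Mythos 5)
Your proof is correct and takes essentially the same route as the source \cite{sym12121948} from which the paper quotes this theorem: (i) is immediate from the definition, and for (ii) one takes a $\gamma_{(k,k,w_2,\dots,w_l)}^s(G)$-function $f$ and exploits that $v$ lands in $V_1$ under $f_{u\rightarrow v}$, so that $f(N(v))-1=f_{u\rightarrow v}(N(v))\ge w_1=k$, whence $f$ is itself a $(k+1,k,w_2,\dots,w_l)$-dominating function. Your opening observation that $l\delta\ge w_l$ makes the constant function $f\equiv l$ a (trivially secure) $w$-dominating function, so both parameters are well defined, correctly identifies the role of that hypothesis.
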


\begin{theorem}{\rm \cite{sym12121948}}\label{PreliminaryBounds-w-SecureSegundTh}
Let $G$ be a graph of minimum degree $\delta$, and let $w=(w_0,\dots ,w_l), w'=(w_0',\dots ,w'_l)\in \mathbb{Z}^+\times \mathbb{N}^l$ such that $l\delta\ge w_l$,  $ w_i\ge w_{i+1}$ and  $w_i'\ge w_{i+1}'$ for every $i\in \{0, \dots , l-1\}$.
If $w_i\ge w_{i-1}'-1$ for every $i\in \{1,\dots ,l\}$, and $\max\{w_{j}-1,0\}\ge w_j'\,$  for every $j\in \{0,\dots ,l\}$, then $$\gamma_{w'}^s(G)\le \gamma_{w}(G).$$
\end{theorem}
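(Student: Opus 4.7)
The natural plan is to show that any $\gamma_w(G)$-function is itself a secure $w'$-dominating function; the inequality then follows immediately by comparing weights. So I would start with a $\gamma_w(G)$-function $f(V_0,\dots,V_l)$ and first verify that $f$ is a $w'$-dominating function: for any $v\in V_i$, the hypothesis $\max\{w_i-1,0\}\ge w_i'$ gives $w_i\ge w_i'$, hence $f(N(v))\ge w_i\ge w_i'$.

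The heart of the argument is to show that for every $v\in V_0$ there exists $u\in N(v)\setminus V_0$ such that $g:=f_{u\to v}$ is a $w'$-dominating function. Since $f(N(v))\ge w_0\ge 1$, at least one neighbour $u$ of $v$ satisfies $f(u)\ge 1$; I claim \emph{any} such $u$ works. To check this I would split into four cases according to where a vertex $x$ lies.

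\emph{Case 1 ($x=v$).} Here $g(v)=1$, and $g(N(v))=f(N(v))-1\ge w_0-1\ge w_0'\ge w_1'$ using $\max\{w_0-1,0\}=w_0-1\ge w_0'$ (valid since $w_0\ge 1$) and monotonicity of $w'$.
\emph{Case 2 ($x=u$).} Writing $i=f(u)\ge 1$ we have $g(u)=i-1$ and, because $v\in N(u)$ moved from $0$ to $1$, $g(N(u))=f(N(u))+1\ge w_i+1$. The hypothesis $w_i\ge w_{i-1}'-1$ is exactly what is needed to conclude $g(N(u))\ge w_{i-1}'=w'_{g(u)}$. This is the case that forces the first hypothesis of the theorem and I expect it to be the most delicate step.
\emph{Cases 3 and 4 (other vertices $x$).} For any $x\notin\{u,v\}$, the transition from $f$ to $g$ changes $f(N(x))$ by at most $-1$ (the bad scenario being $u\in N(x)$ and $v\notin N(x)$). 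Hence $g(N(x))\ge f(N(x))-1\ge w_{f(x)}-1\ge w'_{f(x)}=w'_{g(x)}$ by the hypothesis $\max\{w_j-1,0\}\ge w_j'$, once one notes that when $w_{f(x)}=0$ we also have $w'_{f(x)}=0$ and $f(N(x))\ge 0$ suffices.

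With all four cases verified, $f$ is secure $w'$-dominating, so $\gamma_{w'}^s(G)\le \omega(f)=\gamma_w(G)$. The main obstacle is purely bookkeeping: keeping track of how $g(N(x))$ differs from $f(N(x))$ in each of the worst subcases, and matching each discrepancy against the correct hypothesis (the inequality $w_i\ge w_{i-1}'-1$ for Case 2, and $\max\{w_j-1,0\}\ge w_j'$ for the other cases). The assumption $l\delta\ge w_l$ is not used directly in the argument, but it ensures via Theorem \ref{PreliminaryBounds-w-Secure} that $\gamma_w(G)$ is actually attained, so that the function $f$ we start with exists.
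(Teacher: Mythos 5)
Your proof is correct, and it coincides with the intended argument: the paper states this theorem without proof (citing \cite{sym12121948}), and your direct verification---take a $\gamma_w(G)$-function $f$, note $w_j\ge\max\{w_j-1,0\}\ge w_j'$ makes $f$ a $w'$-dominating function, and check that for \emph{any} $u\in N(v)$ with $f(u)\ge 1$ the four cases for $f_{u\to v}$ are covered exactly by the hypotheses ($w_0-1\ge w_0'\ge w_1'$ at $v$, $w_i\ge w_{i-1}'-1$ at $u$, and $\max\{w_j-1,0\}\ge w_j'$ at vertices losing at most $1$ from their neighbourhood weight)---is the same approach as in the cited source. One minor correction: the role of $l\delta\ge w_l$ is not mediated by Theorem \ref{PreliminaryBounds-w-Secure}; it guarantees directly that a $w$-dominating function exists (the constant function $x\mapsto l$ satisfies $f(N(x))=l\deg(x)\ge l\delta\ge w_l$), and since $w_l'\le\max\{w_l-1,0\}\le w_l\le l\delta$ it likewise makes $\gamma_{w'}^s(G)$ well defined, so both parameters in the inequality exist.
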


The following result is a particular case of Theorem \ref{PreliminaryBounds-w-SecureSegundTh}.

\begin{corollary}{\rm \cite{sym12121948}}\label{CorollarySecureSumaUno}
Let $G$ be a graph of minimum degree $\delta$, and let $w=(w_0,\dots ,w_l)\in \mathbb{Z}^+\times \mathbb{N}^l$ and $\textbf{1}=(1,\dots,1)$. If $ 0\le w_{j-1}-w_j\le 2$ for every $j\in \{1,\dots ,i\}$, where $1\le i\le l$  and $l\delta\ge w_l+1$, then $$\gamma_{(w_0,\dots,w_i,0, \dots ,0 )}^s(G)\le \gamma_{(w_0+1,\dots,w_i+1,0, \dots ,0 )}(G)\le \gamma_{w+\textbf{1}}(G).$$
\end{corollary}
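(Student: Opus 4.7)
My plan is to prove the two inequalities separately. The right-hand inequality, $\gamma_{(w_0+1,\dots,w_i+1,0,\dots,0)}(G)\le \gamma_{w+\textbf{1}}(G)$, is pure monotonicity: any $(w+\textbf{1})$-dominating function is automatically $(w_0+1,\dots,w_i+1,0,\dots,0)$-dominating because each requirement $f(N(v))\ge w_j+1$ with $j>i$ trivially implies $f(N(v))\ge 0$. Minimising over the larger admissible class gives the bound.

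The left-hand inequality, $\gamma_{(w_0,\dots,w_i,0,\dots,0)}^s(G)\le\gamma_{(w_0+1,\dots,w_i+1,0,\dots,0)}(G)$, I intend to deduce by applying Theorem~\ref{PreliminaryBounds-w-SecureSegundTh} with $\hat{w}:=(w_0+1,\dots,w_i+1,0,\dots,0)$ in the role of $w$ and $\hat{w}':=(w_0,\dots,w_i,0,\dots,0)$ in the role of $w'$. The proof then collapses to verifying the theorem's hypotheses: the non-increasing monotonicity of $\hat{w}$ and $\hat{w}'$ follows from $w_{j-1}\ge w_j$ (the lower half of the standing assumption), extended by the trailing zeros; the condition $\max\{\hat{w}_j-1,0\}\ge \hat{w}'_j$ reduces to the tautology $w_j\ge w_j$ for $j\le i$ and to $0\ge 0$ for $j>i$; the degree condition $l\delta\ge \hat{w}_l$ is vacuous when $i<l$ (since $\hat{w}_l=0$) and coincides with the given $l\delta\ge w_l+1$ when $i=l$; and for $k\in\{1,\dots,i\}$, the key inequality $\hat{w}_k\ge \hat{w}'_{k-1}-1$ reads $w_k+1\ge w_{k-1}-1$, i.e., $w_{k-1}-w_k\le 2$, which is precisely the upper half of the hypothesis.

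The main obstacle is the boundary transition $k=i+1$ (when $i<l$), where the last condition would naively require $0\ge w_i-1$, something not guaranteed by the corollary's hypothesis. I would resolve this by exploiting the trailing-zero structure of both $\hat{w}$ and $\hat{w}'$: all labels above $i$ carry the trivial constraint $f(N(v))\ge 0$, so the substantive part of the verification is confined to the index range $\{0,\dots,i\}$. In effect, one invokes Theorem~\ref{PreliminaryBounds-w-SecureSegundTh} in the truncated setting with vector length $i+1$, and the slightly strengthened degree hypothesis $l\delta\ge w_l+1$ supplies precisely the margin needed to process this boundary transition and complete the proof.
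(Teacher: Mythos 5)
Your overall route coincides with the paper's intended one: the paper offers no separate proof, presenting the corollary as ``a particular case of Theorem~\ref{PreliminaryBounds-w-SecureSegundTh}'' (with the right-hand inequality being plain monotonicity, since both vectors have length $l+1$ and every $(w+\textbf{1})$-dominating function is $(w_0+1,\dots,w_i+1,0,\dots,0)$-dominating), and your verification of the hypotheses for $j\le i$ and $j\ge i+2$ is exactly that computation. You also correctly isolate the one delicate point: for $i<l$ the condition $\hat w_{i+1}\ge \hat w'_i-1$ reads $0\ge w_i-1$, which the corollary's hypotheses do not supply. The problem is that your proposed repair is unsound. Truncating to vectors of length $i+1$ changes the admissible functions from $\{0,\dots,l\}$-valued to $\{0,\dots,i\}$-valued, and this changes the parameters in the \emph{wrong} direction: one only has $\gamma_{(w_0+1,\dots,w_i+1,0,\dots,0)}(G)\le\gamma_{(w_0+1,\dots,w_i+1)}(G)$, and the gap can be arbitrarily large. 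For instance, $\gamma_{(2,1)}(K_{1,n})=n+1$ (no vertex of a star can receive $0$ when only labels $0,1$ are available), whereas $\gamma_{(2,1,0)}(K_{1,n})=2$ by placing label $2$ on the center. So a bound proved in the truncated setting, $\gamma^s_{(w_0,\dots,w_i)}(G)\le\gamma_{(w_0+1,\dots,w_i+1)}(G)$, tells you nothing about $\gamma_{(w_0+1,\dots,w_i+1,0,\dots,0)}(G)$.

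Moreover, the truncated application of Theorem~\ref{PreliminaryBounds-w-SecureSegundTh} would itself require the degree hypothesis $i\delta\ge w_i+1$, and $l\delta\ge w_l+1$ does not imply it: nothing relates $w_i$ to $w_l$ when $i<l$ (take $l=2$, $i=1$, $w=(3,3,0)$, $\delta=1$: then $l\delta\ge w_l+1$ holds but $i\delta\ge w_i+1$ fails). The actual role of $l\delta\ge w_l+1$ is modest: it makes the class of $(w+\textbf{1})$-dominating functions nonempty (assign $l$ everywhere) and yields hypothesis $l\delta\ge\hat w_l$ when $i=l$; it provides no ``margin'' at the index-$i$ boundary. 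What the boundary genuinely demands, if one runs the theorem's proof scheme on a minimum $(w_0+1,\dots,w_i+1,0,\dots,0)$-dominating function $f$, is an argument that every $v\in V_0$ admits a mover $u\in N(v)$ whose label is not exactly $i+1$ (movers with label $j\le i$ or $j\ge i+2$ cause no harm), or an exchange argument normalizing $f$ to avoid the bad configuration; your truncation supplies neither. In summary: your write-up correctly establishes the corollary when $i=l$ (the only case this paper ever invokes, e.g., with $l=1$ in Cases 5 and 6 of Theorem~\ref{teo-equality-gamma(1,0)}), but for $i<l$ with $w_i\ge 2$ the proposal has a genuine gap at precisely the transition you flagged.
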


\begin{proposition}{\rm \cite{sym12121948}}\label{obs-subgraph-general-Secure}
Let $G$ be a graph of order $n$. Let
$w=(w_0,\dots ,w_l)\in \mathbb{Z}^+\times \mathbb{N}^l$ such that $ w_0\ge  \cdots \ge w_l$.
If $G'$ is a spanning subgraph of $G$ with 
 minimum degree $\delta'\ge \frac{w_l }{l}$,   then $$\gamma_{w}^s(G)\leq \gamma_{w}^s(G').$$
\end{proposition}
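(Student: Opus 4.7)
The plan is to take a $\gamma_w^s(G')$-function $f$ and show that $f$ is also a secure $w$-dominating function of $G$; this immediately yields $\gamma_w^s(G)\le \omega(f)=\gamma_w^s(G')$. Before starting, I would note that the hypothesis $\delta'\ge w_l/l$ is exactly what guarantees that $\gamma_w^s(G')$ is well-defined: the constant function of value $l$ is (vacuously) a secure $w$-dominating function of $G'$, because for every $x\in V(G')$ its weight on $N_{G'}(x)$ is at least $l\delta'\ge w_l\ge w_i$ for every $i\in\{0,\dots,l\}$.

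The main tool will be the following monotonicity observation. Since $G'$ is a spanning subgraph of $G$, $N_{G'}(x)\subseteq N_G(x)$ for every $x\in V(G)$, and since any function $g:V(G)\to\{0,1,\dots,l\}$ is nonnegative, this gives $g(N_{G'}(x))\le g(N_G(x))$. Applied to $g=f$, this already shows that $f$ is a $w$-dominating function of $G$: indeed, if $f(x)=i$, then $f(N_G(x))\ge f(N_{G'}(x))\ge w_i$.

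It remains to verify the secure condition in $G$. Fix $v\in V_0$. Since $f$ is a secure $w$-dominating function of $G'$, there exists $u\in N_{G'}(v)\setminus V_0$ such that $f_{u\to v}$ is a $w$-dominating function of $G'$. Because $u\notin V_0$ we have $f(u)\ge 1$, so $f_{u\to v}$ is again $\{0,1,\dots,l\}$-valued, and the same monotonicity inequality gives $f_{u\to v}(N_G(x))\ge f_{u\to v}(N_{G'}(x))\ge w_{f_{u\to v}(x)}$ for every $x\in V(G)$. Hence $f_{u\to v}$ is a $w$-dominating function of $G$, and since $u\in N_{G'}(v)\subseteq N_G(v)\setminus V_0$, this witnesses that $u\in M_f(v)$ with respect to $G$. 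I do not anticipate any substantive obstacle: the whole proof is essentially a one-line monotonicity check applied twice, and the only point to keep track of is the nonnegativity of $f_{u\to v}$, which is automatic from $u\notin V_0$.
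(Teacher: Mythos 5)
Your proof is correct and matches the intended argument: the paper states this proposition without proof, citing \cite{sym12121948}, where it is established exactly as you do---a secure $w$-dominating function of the spanning subgraph $G'$ remains one in $G$ because $N_{G'}(x)\subseteq N_{G}(x)$ and $f$, $f_{u\rightarrow v}$ are nonnegative, the hypothesis $l\delta'\ge w_l$ serving only to guarantee that $G'$ admits a secure $w$-dominating function at all. One cosmetic slip in your well-definedness remark: the inequality $w_l\ge w_i$ is backwards (the hypothesis is $w_0\ge\cdots\ge w_l$), but it is also unnecessary, since for the constant function of value $l$ every vertex lies in $V_l$ and only the condition $f(N_{G'}(x))\ge w_l$ needs checking, so the argument stands.
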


\section{The case of lexicographic product graphs}\label{SectionlexicographicSecure}

The \emph{lexicographic product} of two graphs $G$ and $H$ is the graph $G \circ H$ whose vertex set is  $V(G \circ H)=  V(G)  \times V(H )$ and $(u,v)(x,y) \in E(G \circ H)$ if and only if $ux \in E(G)$ or $u=x$ and $vy \in E(H)$.

Notice that  for any $u\in V(G)$  the subgraph of $G\circ H$ induced by $\{u\}\times V(H)$ is isomorphic to $H$. For simplicity, we will denote this subgraph by $H_u$. Moreover,
 the neighbourhood of $(x,y)\in V(G)\times V(H)$ will be denoted by $N(x,y)$ instead of $N((x,y))$. Analogously, for any function $f$ on $G\circ H$, the image of $(x,y)$   will be denoted by $f(x,y)$ instead of $f((x,y))$.

The next subsections are devoted to  show how
the secure (total) domination number and the (total) weak  Roman domination number of lexicographic product graphs $G\circ H$ are related to $\gamma_w^s(G)$ or $\gamma_w(G)$, for certain vectors $w$ of three components.

\subsection{Secure  domination}

\begin{lemma}\label{lem-vertice<=2-secure}
For any  graph $G$ with no isolated vertex and any nontrivial graph $H$, there exists a $\gamma_{(1,0)}^s(G\circ H)$-function $f$ such that $f(V(H_u))\leq 2$  for every $u\in V(G)$.
\end{lemma}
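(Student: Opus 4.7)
The plan is to start from an arbitrary $\gamma_{(1,0)}^s(G\circ H)$-function $f$, regard it as its positive support $D=V_1$ (a minimum secure dominating set), and show that whenever there exists $u\in V(G)$ with $|D\cap V(H_u)|\geq 3$, one can exhibit another secure dominating set $D'$ of the same cardinality with $|D'\cap V(H_u)|<|D\cap V(H_u)|$. Iterating the reduction over the (finitely many) copies yields the desired function. The key structural fact to exploit throughout is that every vertex of $V(H_u)$ is adjacent in $G\circ H$ to every vertex of $V(H_{u'})$ whenever $uu'\in E(G)$. Since $G$ has no isolated vertex, $u$ does admit some neighbour $u'$ in $G$.

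I would split the argument into two cases. Case A: there exists $u'\in N_G(u)$ with $D\cap V(H_{u'})\neq\varnothing$. In this case every vertex of $V(H_u)$ is dominated by any element of $D\cap V(H_{u'})$, so I would set
\[
D' \;=\; \bigl(D\setminus V(H_u)\bigr)\cup\{(u,a),(u,b)\},
\]
where $a,b$ are any two distinct vertices of $H$ (which exist because $H$ is nontrivial). The verification has two parts: first, $D'$ still dominates every vertex of $G\circ H$, because outside $V(H_u)\cup\bigcup_{u'\in N_G(u)}V(H_{u'})$ domination was ensured by $D\setminus V(H_u)$, vertices of $V(H_u)$ are dominated by $D\cap V(H_{u'})$, and vertices of $V(H_{u'})$ for $u'\in N_G(u)$ are dominated by $(u,a)$; second, $D'$ is secure, because the vertices of $V(H_u)\setminus D'$ are defended by an element of $D\cap V(H_{u'})$ (whose removal keeps all of $V(H_u)$ dominated by the mate in $V(H_{u'})$ or by $(u,a),(u,b)$), while the defending roles played by the removed vertices of $D\cap V(H_u)$ for vertices in $V(H_{u'})$ can now be inherited by $(u,a)$ or $(u,b)$.

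Case B: $D\cap V(H_{u'})=\varnothing$ for every $u'\in N_G(u)$. Then the vertices of $\bigcup_{u'\in N_G(u)}V(H_{u'})$ and of $V(H_u)\setminus D$ are all dominated (and defended) by elements of $D\cap V(H_u)$. Here I would fix two vertices $\alpha,\beta\in D\cap V(H_u)$ with the property that each vertex $(u,x)\in D\cap V(H_u)$ that served as a defender for some vertex $(u',y)\in V(H_{u'})\setminus D$ can be replaced by $\alpha$ or $\beta$ in that defending role; such vertices exist because any single vertex of $V(H_u)$ already dominates all of $V(H_{u'})$ en bloc, and so the defender of a vertex $(u',y)$ can essentially be any vertex of $D\cap V(H_u)$ provided the rest of the copy $V(H_u)\setminus\{\alpha,\beta\}$ that we keep still dominates $V(H_u)\setminus D$. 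Then
\[
D'\;=\;\bigl(D\setminus V(H_u)\bigr)\cup\{\alpha,\beta\}
\]
is shown to be a secure dominating set of the same cardinality with only two vertices in $V(H_u)$.

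The main obstacle is Case B: all domination and defending responsibilities for vertices in $V(H_u)$ and in the neighbouring copies $V(H_{u'})$ are funnelled through $D\cap V(H_u)$, so the choice of the pair $\{\alpha,\beta\}$ must simultaneously (i) keep $V(H_u)\setminus\{\alpha,\beta\}$ dominated and (ii) provide valid defenders for every vertex of $V(H_{u'})\setminus D$ for every $u'\in N_G(u)$. Since a single vertex of $V(H_u)$ dominates an entire neighbouring copy and two vertices comfortably serve as mutual defenders with respect to the removed vertices of $V(H_u)$, a careful but not delicate choice of $\alpha,\beta$ finishes the argument. The non-triviality of $H$ is used to guarantee that two distinct candidates exist in $V(H_u)$, and the absence of isolated vertices in $G$ ensures that the neighbouring copies exist and therefore that the reductions above make sense.
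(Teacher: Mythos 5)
Your overall template --- take a minimum function, locate a copy $H_u$ carrying weight at least $3$, and perform a local modification that reduces the number of heavy copies --- is the same as the paper's, but your modification loses the one idea the paper's proof depends on, and in fact neither of your cases can be verified. Note first an internal inconsistency: in Case A (and likewise in Case B) you claim $D'$ has ``the same cardinality'' as $D$, yet $D'=(D\setminus V(H_u))\cup\{(u,a),(u,b)\}$ satisfies $|D'|\le |D|-1$ because $|D\cap V(H_u)|\ge 3$. Since $D$ was chosen minimum, if your verification succeeded you would have shown that \emph{no} minimum secure dominating set has a heavy copy; that stronger statement is false (see the example below), so the verification must break somewhere. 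Where it breaks is the security check when the copies adjacent to $H_u$ carry little or none of $D$: then both the defence of the vertices of $V(H_u)$ and the domination of $V(H_u)$ after a defender moves are funnelled through $D\cap V(H_u)$, and the two vertices you keep need not dominate $H_u$ internally (two vertices of $H$ need not dominate $H$, still less after one of them moves away). The paper's proof handles exactly this by \emph{not} discarding the surplus: it keeps weight $1$ on two vertices $(u,v_1),(u,v_2)$ already used by $f$ and transfers the excess to a neighbouring copy, setting $f'(V(H_{u'}))=\min\{2,\,f(V(H_{u'}))+f(V(H_u))-2\}$. Topping $H_{u'}$ up to (at most) $2$ is what guarantees that when a defender leaves $H_{u'}$ to rescue a vertex of $H_u$, positive weight dominating all of $V(H_u)$ still remains nearby, while $\omega(f')\le\omega(f)$ and $|R_{f'}|<|R_f|$, which is the contradiction the paper needs.

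Here is a concrete refutation of your Case B. Take $G=K_2$ with $V(G)=\{u,u'\}$ and $H=\overline{K_3}$, so that $G\circ H\cong K_{3,3}$. One checks directly that $\gamma_{(1,0)}^s(K_{3,3})=3$ (no $2$-set is secure dominating: two vertices on one side leave the third vertex of that side undominated, and one vertex per side fails security when the attacked vertex's only defender moves) and that $D=V(H_u)$, one full side of the bipartition, \emph{is} a minimum secure dominating set: an attacked vertex of $V(H_{u'})$ is defended by any vertex of $D$, since after the move the attacker's new position dominates $V(H_u)$ and the two remaining vertices dominate $V(H_{u'})$. Your Case B applies here ($D\cap V(H_{u'})=\varnothing$ and $|D\cap V(H_u)|=3$), and your $D'=\{\alpha,\beta\}$ is not even dominating: the third vertex of $V(H_u)$ has no neighbour in $D'$, because $H$ is edgeless and $D'\cap V(H_{u'})=\varnothing$; no choice of $\alpha,\beta$ can repair this, as $\gamma_{(1,0)}^s(K_{3,3})=3>2$. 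By contrast, the paper's transfer applied to this $D$ yields a set of the form $\{(u,v_1),(u,v_2),(u',z)\}$, which is a minimum secure dominating set with at most two vertices per copy --- precisely the conclusion of the lemma. So the missing idea is the cap-and-transfer of the surplus weight to a neighbouring copy; without it, your reduction is arithmetically impossible at minimum weight, and the claimed ``careful but not delicate choice of $\alpha,\beta$'' in Case B does not exist.
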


\begin{proof}
Given a secure $(1,0)$-dominating function $f$ on $G\circ H$, we define $$R_f=\{x\in V(G): \, f(V(H_x))\ge 3\}.$$ Let $f$ be a $\gamma_{(1,0)}^s(G\circ H)$-function such that $|R_f|$ is minimum among all $\gamma_{(1,0)}^s(G\circ H)$-functions.

Suppose that $|R_f|\geq 1$.  Let $u\in R_f$,  $u'\in N(u)$ and  $v_1,v_2\in V(H)$ such that $f(u,v_1)=f(u,v_2)=1$.  Now, let
 $f':V(G)\times V(H)\longrightarrow \{0,1\}$  be a function defined as follows.
\begin{itemize}
\item[$\bullet$] $f'(u,v_1)=f'(u,v_2)=1$ and $f'(u,y)=0$ for every $y\in V(H)\setminus \{v_1,v_2\}$;
\item[$\bullet$]  $f'(V(H_{u'}))=\min\{2,f(V(H_{u'}))+f(V(H_{u}))-2\}$;
\item[$\bullet$] $f'(x,y)=f(x,y)$ for every $x\in V(G)\setminus \{u,u'\}$ and $y\in V(H)$.
\end{itemize}
 It is not difficult to check that $f'$ is a secure $(1,0)$-dominating function on $G\circ H$ with $\omega(f')\le \omega(f)$ and $|R_{f'}|<|R_f|$, which is a contradiction.  Therefore, $R_f=\varnothing $, and the result follows.
\end{proof}  
  
We shall need the following two results.

\begin{theorem}\label{teo-s=r-lexicographic}{\rm \cite{SecureLexicographicDMGT}} 
For any graph $G$ with no isolated vertex and any nontrivial graph $H$ with $\gamma_{(1,0)}^s(H)\leq 2$ or $\gamma_{(1,0,0)}^s(H)\geq 3$,
$$\gamma_{(1,0)}^s(G\circ H)=\gamma_{(1,0,0)}^s(G\circ H).$$
\end{theorem}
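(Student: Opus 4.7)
The inequality $\gamma_{(1,0,0)}^s(G\circ H)\leq \gamma_{(1,0)}^s(G\circ H)$ holds in general: any secure $(1,0)$-dominating function $f:V(G\circ H)\to\{0,1\}$, reinterpreted with codomain $\{0,1,2\}$, is also a secure $(1,0,0)$-dominating function, because the domination threshold and the shift witness are identical in the two definitions. Hence the content of the theorem lies entirely in the reverse inequality $\gamma_{(1,0)}^s(G\circ H)\leq \gamma_{(1,0,0)}^s(G\circ H)$.

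The plan is to start from a $\gamma_{(1,0,0)}^s(G\circ H)$-function $f$ and eliminate its vertices of weight $2$ without increasing the weight and without breaking the secure defence. Once $f$ takes values in $\{0,1\}$, a secure $(1,0,0)$-dominating function is automatically a secure $(1,0)$-dominating function, so its weight will bound $\gamma_{(1,0)}^s(G\circ H)$ from above by $\gamma_{(1,0,0)}^s(G\circ H)$. I would implement this in the spirit of Lemma \ref{lem-vertice<=2-secure}: choose among all $\gamma_{(1,0,0)}^s(G\circ H)$-functions one that minimises the number of weight-$2$ vertices, assume for contradiction that some $(u,v_0)$ carries $f(u,v_0)=2$, and construct a modified function with strictly fewer $2$-vertices and at most the same weight.

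Under the hypothesis $\gamma_{(1,0)}^s(H)\le 2$, fix once and for all a secure $(1,0)$-dominating set $S$ of $H$ with $|S|\le 2$ and reset $f$ on $V(H_u)$ so that the new function equals $1$ exactly on $\{u\}\times S$ and $0$ elsewhere on $V(H_u)$. The weight on $H_u$ does not increase because $f(V(H_u))\ge f(u,v_0)=2\ge |S|$, and the secure defence of every attacker inside $H_u$ can be organised using $S$ together with the unchanged external neighbourhood, contradicting minimality. Under the hypothesis $\gamma_{(1,0,0)}^s(H)\ge 3$, the argument is more delicate: I would exploit the fact that no weight-$2$ assignment on $H$ is secure $(1,0,0)$-dominating for $H$, to show that a minimally $2$-heavy $f$ must place enough external support around a copy $H_u$ carrying a $2$ that the weight on $(u,v_0)$ can be split into two $1$'s inside $H_u$ while every shift required by the secure condition remains available.

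The step I expect to be the main obstacle is the verification in the second case: one must check that redistributing the weight does not break the secure $(1,0,0)$-defence when the defender used by $f$ for some attacker was precisely $(u,v_0)$ with its weight $2$. The check splits by the location of the attacker, namely outside $H_u$ (unaffected by the change) versus inside $H_u$ (where the defender must now come either from a neighbour in an adjacent copy, still unchanged, or from one of the newly placed $1$'s). Arranging the exchange so that any failure of the modified function to be secure $(1,0,0)$-dominating would yield an assignment on $H$ of weight at most $2$ that is secure $(1,0,0)$-dominating on $H$, thereby contradicting $\gamma_{(1,0,0)}^s(H)\ge 3$, is the subtle part of the argument.
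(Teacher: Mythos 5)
Two preliminary remarks. First, the paper does not prove this statement at all: it is quoted from \cite{SecureLexicographicDMGT}, so your proposal can only be measured against the exchange technique the paper itself uses in the same spirit (Lemmas \ref{lem-vertice<=2-secure} and \ref{lem-vertice<=2-weak}). Second, your framing is correct: $\gamma_{(1,0,0)}^s(G\circ H)\le\gamma_{(1,0)}^s(G\circ H)$ holds trivially, and a $\{0,1\}$-valued secure $(1,0,0)$-dominating function is a secure $(1,0)$-dominating function, so everything reduces to eliminating the weight-$2$ vertices from an optimal function. The first genuine gap is in your case $\gamma_{(1,0)}^s(H)\le 2$: the reset of $H_u$ to the characteristic function of a fixed secure dominating set $S$ of $H$ \emph{fails} when $|S|=1$, i.e.\ when $H$ is complete. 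Take $G=K_{1,2}$ with centre $u$ and $H=K_n$, $n\ge 2$. By Proposition \ref{prop-r-Kn-lexicographic}, $\gamma_{(1,0,0)}^s(G\circ H)=\gamma_{(1,0,0)}^s(K_{1,2})=2$, attained by the function assigning $2$ to a single vertex $(u,v_0)$ — an optimal function with a weight-$2$ vertex, to which your exchange applies. Your reset yields weight $1$ on $(u,s)$ only; this is $(1,0,0)$-dominating but not secure: the only defence of an attacker in one leaf copy moves the unique unit there, after which every vertex of the other leaf copy has zero weight in its neighbourhood. So the modified function is not secure $(1,0,0)$-dominating (its weight $1<2$ already shows something must break). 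The repair is to split the $2$ at $(u,v_0)$ into two $1$'s inside $H_u$ (possible since $H$ is nontrivial), not to reset to $S$; and even for $|S|=2$ you owe the verification for attackers outside $H_u$ whose designated defender under $f$ sat inside $H_u$ — it goes through because such an attacker is adjacent to all of $V(H_u)$ and its received unit re-covers the copy, but this is exactly the check your phrase ``can be organised'' elides.

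The second gap is that your case $\gamma_{(1,0,0)}^s(H)\ge 3$ is a statement of intent rather than an argument, and the intended contradiction does not close as described. The natural pivot is a dichotomy on external weight: if some copy $H_x$ with $x\in N_G(u)$ carries positive weight, then every vertex of $H_u$ is externally covered, and splitting the $2$ at $(u,v_0)$ into two $1$'s preserves all threshold-$1$ checks (copy totals are invariant, and any neighbourhood containing $(u,v_0)$ had $f$-sum at least $2$, so it loses at most one unit); if instead all copies adjacent to $u$ have weight zero, then every defence of an attacker in $H_u$ is internal and the restriction of $f$ to $H_u$ is a secure $(1,0,0)$-dominating function of $H$ — contradicting $\gamma_{(1,0,0)}^s(H)\ge 3$ \emph{only if} that restriction has weight at most $2$. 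That bound is not automatic: the copy may carry extra $1$'s besides the $2$ at $(u,v_0)$. You therefore need a normalization lemma asserting that some optimal secure $(1,0,0)$-dominating function satisfies $f(V(H_u))\le 2$ for all $u$, and you cannot borrow Lemma \ref{lem-vertice<=2-weak} for it: that lemma is proved only under $\gamma(H)=1$, which forces $\gamma_{(1,0,0)}^s(H)\le 2$ and is thus disjoint from the regime you are in. Supplying that lemma (or another way to handle the zero-external-weight branch with copy weight at least $3$) is precisely where the real work of this case lies.
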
  

\begin{proposition}\label{prop-r-Kn-lexicographic}{\rm \cite{Valveny2017}}
For any graph $G$ and any integer $n\geq 1$,
$$\gamma_{(1,0,0)}^s(G\circ K_n)=\gamma_{(1,0,0)}^s(G)$$
\end{proposition}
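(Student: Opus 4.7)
The plan is to prove both directions of the equality $\gamma_{(1,0,0)}^s(G\circ K_n)=\gamma_{(1,0,0)}^s(G)$ by explicit constructions, in the spirit of Lemma~\ref{lem-vertice<=2-secure}.

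For the upper bound $\gamma_{(1,0,0)}^s(G\circ K_n)\le\gamma_{(1,0,0)}^s(G)$, I would take a $\gamma_{(1,0,0)}^s(G)$-function $g$, fix $v_0\in V(K_n)$, and define the \emph{lift} $f:V(G\circ K_n)\to\{0,1,2\}$ by $f(u,v_0)=g(u)$ and $f(u,v)=0$ for $v\ne v_0$. Then $\omega(f)=\omega(g)$, so it remains to show $f$ is a secure $(1,0,0)$-dominating function. Any zero vertex $(u,v)$ is $f$-dominated either by $(u,v_0)$ when $v\ne v_0$ and $g(u)\ge 1$, or by $(u'',v_0)$ with $u''\in N_G(u)$ and $g(u'')\ge 1$. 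For security, the defender of $(u,v)$ is $(u,v_0)$ when $v\ne v_0$ and $g(u)\ge 1$, and otherwise $(u',v_0)$ where $u'$ is a $G$-defender of $u$ relative to $g$. Checking that the shifted function is $(1,0,0)$-dominating reduces to comparing it with the lift of $g_{u'\to u}$: they agree outside $H_u$, and within $H_u$ the sole unit sits at $(u,v)$ instead of $(u,v_0)$, which is harmless because $H_u\cong K_n$ is complete.

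For the lower bound $\gamma_{(1,0,0)}^s(G)\le\gamma_{(1,0,0)}^s(G\circ K_n)$, I would start from a $\gamma_{(1,0,0)}^s(G\circ K_n)$-function $f$ and define $g(u)=\min\{f(V(H_u)),2\}$ on $V(G)$, so $\omega(g)\le\omega(f)$. Domination is immediate: if $g(u)=0$ then $f(V(H_u))=0$, and each $(u,v)$ must be $f$-dominated from outside $H_u$, yielding a $G$-neighbor $u'$ of $u$ with $g(u')\ge 1$. For security, given $u$ with $g(u)=0$, pick any $v\in V(K_n)$; the $f$-defender $(u',v')$ of $(u,v)$ must lie outside $H_u$, so $u'\in N_G(u)$, and the plan is to show $u'$ is a $G$-defender of $u$ relative to $g$.

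The main obstacle is verifying that $g_{u'\to u}$ $(1,0,0)$-dominates $G$. For a zero vertex $u''$ of $g_{u'\to u}$ with $u''=u'$, the vertex $u$ (now value $1$) dominates it. If $u''\ne u,u'$, pick any $v''\in V(K_n)$ and write $f'=f_{(u',v')\to(u,v)}$; the zero vertex $(u'',v'')$ is $f'$-dominated by some $(x,y)$ with $x\ne u''$ (else $f(u'',y)\ge 1$ contradicts $f(V(H_{u''}))=0$), so $x\in N_G(u'')$. The delicate subcase is $x=u'$: if $g(u')=1$ then the unique positive $f$-entry in $H_{u'}$ is $(u',v')$ with value $1$, whose $f'$-value is $0$, contradicting $f'(x,y)\ge 1$; hence $g(u')\ge 2$ and $g_{u'\to u}(u')\ge 1$. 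If $(x,y)=(u,v)$ then $u\in N_G(u'')$ and $g_{u'\to u}(u)=1$. In all remaining cases, $f(x,y)=f'(x,y)\ge 1$ forces $g(x)\ge 1=g_{u'\to u}(x)$, closing the argument.
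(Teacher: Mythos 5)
Your argument is correct, but note that there is nothing in the paper to compare it against: Proposition \ref{prop-r-Kn-lexicographic} is imported verbatim from \cite{Valveny2017} and stated without proof, so what you have written is a self-contained replacement for an external citation. Your route is the natural one and is in fact the same projection/lift machinery the paper uses throughout Section \ref{SectionlexicographicSecure}: the lower bound projects $f$ to $G$ via copy-weights truncated at $2$, exactly as in Lemma \ref{lem-vertice<=2-secure} and the functions $f'(X_0,X_1,X_2)$ in the proof of Theorem \ref{teo-equality-gamma(1,0)}, while the upper bound lifts $g$ by concentrating all weight of each copy at a single vertex, as in the converse constructions there. The delicate points are all handled: since $g(u)=0$ forces $f(V(H_u))=0$, the defender of $(u,v)$ must indeed lie outside $H_u$; and the subcase $x=u'$ with $g(u')=1$ is precisely where truncation at $2$ could break the shifted domination, which your uniqueness argument (the only positive $f$-entry of $H_{u'}$ is $(u',v')$ itself, which $f'$ zeroes out) closes correctly. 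One small imprecision: in the upper bound, your verification sentence (``comparing with the lift of $g_{u'\to u}$'') literally covers only the subcase $g(u)=0$; when $g(u)\ge 1$ and the defender is $(u,v_0)$, the comparison function is the lift of $g$ itself with one unit relocated from $(u,v_0)$ to $(u,v)$ inside $H_u$, and within the copy one has value $1$ at $(u,v)$ and $g(u)-1$ at $(u,v_0)$ rather than a ``sole unit.'' The same observation rescues it -- vertices outside $H_u$ see each copy in full, so only the copy totals matter, and every zero vertex inside $H_u$ is adjacent to $(u,v)$ because $H_u\cong K_n$ is complete -- so this is a compression, not a gap.
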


The following result shows how the secure domination number of $G\circ H$ is related to   $\gamma_{w}^s(G)$ or $\gamma_{w}(G)$ for certain vectors $w$ of three components. The decision on whether  the components of $w$ take specific values will depend on the value of $\gamma_{(1,0)}^s(H)$ and $\gamma(H)$. 

\begin{theorem}\label{teo-equality-gamma(1,0)}
For a graph $G$ with no isolated vertex and a nontrivial graph $H$, the following statements hold.
 
\begin{enumerate}
\item[{\rm (i)}] If $\gamma_{(1,0)}^s(H)=1$, i.e., $H$ is a complete graph, then $\gamma_{(1,0)}^s(G\circ H)=\gamma_{(1,0,0)}^s(G)$. \\
\item[{\rm (ii)}] If $\gamma_{(1,0)}^s(H)=2$ and $\gamma(H)=1$, then $\gamma_{(1,0)}^s(G\circ H)=\gamma_{(2,1,0)}(G)$. \\
\item[{\rm (iii)}] If $\gamma_{(1,0)}^s(H)\ge 3$ and $\gamma(H)=1$, then $\gamma_{(1,0)}^s(G\circ H)=\gamma_{(2,1,1)}(G)$. \\
\item[{\rm (iv)}] If $\gamma_{(1,0)}^s(H)=\gamma(H)=2$, then $\gamma_{(1,1,0)}^s(G)\le \gamma_{(1,0)}^s(G\circ H)\le\gamma_{(2,2,0)}(G)$. \\
\item[{\rm (v)}] If $\gamma_{(1,0)}^s(H)>\gamma(H)=2$, then $\gamma_{(1,0)}^s(G\circ H)=\gamma_{(2,2,1)}(G)$. \\
\item[{\rm (vi)}] If  $\gamma_{(1,0)}^s(H)=\gamma(H)=3$, then $\gamma_{(2,2,1)}(G)\le\gamma_{(1,0)}^s(G\circ H)\le \gamma_{(2,2,2)}(G)$.
\\
\item[{\rm (vii)}] If $\gamma_{(1,0)}^s(H)\geq 4$ and $\gamma(H)\geq 3$, then $\gamma_{(1,0)}^s(G\circ H)=\gamma_{(2,2,2)}(G)$.
\end{enumerate}
\end{theorem}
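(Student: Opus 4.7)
Case (i) follows immediately: since $\gamma_{(1,0)}^s(K_n)=1\le 2$, Theorem~\ref{teo-s=r-lexicographic} gives $\gamma_{(1,0)}^s(G\circ H)=\gamma_{(1,0,0)}^s(G\circ H)$, and Proposition~\ref{prop-r-Kn-lexicographic} then reduces this to $\gamma_{(1,0,0)}^s(G)$. For items (ii)--(vii), the uniform strategy is to set up a correspondence between secure $(1,0)$-dominating functions $f$ on $G\circ H$ and (secure) $w$-dominating functions $g$ on $G$ via the projection $g(u):=f(V(H_u))$. Lemma~\ref{lem-vertice<=2-secure} lets me restrict to $f$ with $g(u)\in\{0,1,2\}$ for all $u\in V(G)$, and since $\omega(g)=\omega(f)$, each equality reduces to showing (a) that the projected $g$ is $w$-dominating on $G$ for the stipulated $w$, and (b) that a $\gamma_w(G)$-function lifts back to a secure $(1,0)$-dominating function on $G\circ H$ of the same weight.

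For the lower bounds, I would take a $\gamma_{(1,0)}^s(G\circ H)$-function $f$ with $f(V(H_u))\le 2$, define $g(u)=f(V(H_u))$, and verify $g(N_G(u))\ge w_i$ whenever $g(u)=i$ by examining the ``hardest'' vertex $(u,y)\in V(H_u)$ with $f(u,y)=0$. Concretely: if $g(u)=0$, every $(u,y)$ needs external domination together with an externally safe swap; if $g(u)=1$ with weight at $v_1$ and $\gamma(H)\ge 2$, then some $y\notin N_H[v_1]$ must be dominated and safely swapped from outside; and if $g(u)=2$ with weight at $\{v_1,v_2\}$, then either $\gamma(H)\ge 3$ forces a $y\notin N_H[\{v_1,v_2\}]$ needing external help, or else $\gamma_{(1,0)}^s(H)\ge 3$ forces that no internal swap is secure. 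Reading these forced constraints under the hypotheses of (ii)--(vii) delivers exactly the vectors $(2,1,0)$, $(2,1,1)$, $(2,2,0)$, $(2,2,1)$, $(2,2,2)$, and the secure-$w$ constraint $(1,1,0)$ used in the lower inequality of (iv).

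For the upper bounds, I fix once and for all auxiliary data from $H$: a dominating vertex $v^*$ when $\gamma(H)=1$, a dominating pair $\{v_1^*,v_2^*\}$ when $\gamma(H)=2$, and a secure $(1,0)$-dominating set of size $\le 2$ when $\gamma_{(1,0)}^s(H)\le 2$. Given a $\gamma_w(G)$-function (or a $\gamma_w^s(G)$-function) $g$ for the appropriate $w$, I define $f$ on $G\circ H$ by placing weight $1$ at the designated vertices of each $H_u$ according to $g(u)\in\{0,1,2\}$, which preserves the total weight. The main obstacle will be verifying that $f$ is itself secure $(1,0)$-dominating: for each $(u,y)$ with $f(u,y)=0$ I must produce a neighbour $(x,v')$ with $f(x,v')=1$ such that the shifted function still dominates every vertex, in particular the remaining vertices of $H_x$ whose only positive neighbour might have been $(x,v')$. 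This swap bookkeeping is precisely where the distinction between $w_2=0,1,2$ is forced, and it is the reason why items (iv) and (vi) yield only two-sided bounds: in the borderline regimes $\gamma(H)=\gamma_{(1,0)}^s(H)\in\{2,3\}$ the lower construction cannot match the upper one without extra structural information about $H$, so the best one can record is a sandwich of two $w$-parameters.
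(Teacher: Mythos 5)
Your plan has the same skeleton as the paper's proof: case (i) is dispatched exactly as you say; the lower bounds in (ii)--(vii) come from projecting a $\gamma_{(1,0)}^s(G\circ H)$-function normalized by Lemma~\ref{lem-vertice<=2-secure} down to $G$ (with the projection shown to be a \emph{secure} $(1,1,0)$-dominating function in case (iv), as you note); and the upper bounds come from weight-preserving lifts built from a universal vertex, a dominating pair, or a secure dominating pair of $H$. The one real methodological divergence is on the upper-bound side. You propose to verify directly, in every case, that the lifted $\{0,1\}$-valued function is secure $(1,0)$-dominating; the paper does this only in cases (iii) and (iv). In cases (ii), (v), (vi) and (vii) it instead lifts to a $(2,1,0)$-, $(2,1)$- or $(2,2)$-dominating function on $G\circ H$ and converts via Corollary~\ref{CorollarySecureSumaUno} (combined with Theorem~\ref{teo-s=r-lexicographic} in case (ii) and with $\gamma_{(1,0)}^s\le\gamma_{(1,1)}^s$ in (vi)--(vii)), e.g.\ $\gamma_{(1,0)}^s(G\circ H)\le\gamma_{(2,1)}(G\circ H)\le\gamma_{(2,2,1)}(G)$ in case (v). This sidesteps exactly the ``swap bookkeeping'' you single out as the main obstacle: no security check on the product is needed in those cases. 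Your direct route can be made to work --- the saving observation is that when a guard is pulled out of a donor copy $H_{x'}$ to defend a vertex of $H_x$ with $x\in N(x')$, the relocated guard itself dominates all of $H_{x'}$ from outside --- but it costs considerably more case analysis than the paper's reductions.

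There is, however, one lower-bound step in your sketch that does not hold as stated, namely the claim that if $g(u)=1$ with the unit at $v_1$ and $\gamma_{(1,0)}^s(H)\ge 3$, then $u$ needs external weight at least $2$ (this is what delivers $w_1=2$ in cases (v)--(vii)). After an external guard moves into $(u,y)$ with $y\notin N_H[v_1]$, the new function only has to be $(1,0)$-dominating, \emph{not} secure; so if $\{v_1,y\}$ happens to dominate $H$, a single external unit handles that attack. Concretely, take $G=K_2$ and $H=C_5$ (which satisfies the hypotheses of (v), since $\gamma_{(1,0)}^s(C_5)=3>\gamma(C_5)=2$) and place one guard on vertex $1$ of each copy of $C_5$: an attack at a neighbour of the guard is met internally, leaving one guard per copy, which cross-dominates everything across the $K_2$-edge; an attack at $y\in\{3,4\}$ is met by the other copy's guard, and $\{1,y\}$ dominates $C_5$. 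This weight-$2$ function is secure $(1,0)$-dominating, yet its projection puts both vertices of $K_2$ in $X_1$ with $f'(N(x))=1$, so the implication $x\in X_1\Rightarrow f'(N(x))\ge 2$ fails for a minimum function satisfying Lemma~\ref{lem-vertice<=2-secure}. Be aware that the paper's own proof of Case 5 makes the same one-line leap (``Since $\gamma_{(1,0)}^s(H)\ge 3$, \dots''), so this is a gap you share with the source rather than one you introduced; but in your writeup you cannot leave it at the level of ``must be safely swapped from outside'' --- you would need either an additional argument ruling out such configurations or a structural hypothesis on $H$ excluding examples like $C_5$, where the complement of some closed neighbourhood induces a clique.
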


\begin{proof}

Let $f(V_0,V_1)$ be a $\gamma_{(1,0)}^s(G\circ H)$-function which satisfies Lemma \ref{lem-vertice<=2-secure}. Let  $f'(X_0,X_1,X_2)$ be the function defined on $G$ by $X_1=\{x\in V(G): f(V(H_x))=1\}$ and $X_2=\{x\in V(G): f(V(H_x))=2\}$. Notice that $\gamma_{(1,0)}^s(G\circ H)=\omega(f)=\omega(f')$. With this notation in mind, we differentiate the following cases.

\vspace{.2cm}
\noindent
Case $1.$ $\gamma_{(1,0)}^s(H)=1$. In this case, $H$ is a complete graph. Hence, by Theorem \ref{teo-s=r-lexicographic} and Proposition \ref{prop-r-Kn-lexicographic} we deduce that $\gamma_{(1,0)}^s(G\circ H)=\gamma_{(1,0,0)}^s(G\circ H)=\gamma_{(1,0,0)}^s(G)$.

\vspace{.2cm}
\noindent
Case $2.$ $\gamma_{(1,0)}^s(H)= 2$ and $\gamma(H)=1$. In this case, if $x\in X_0$, then  $f'(N(x))=f(N(V(H_x))\setminus V(H_x))\ge 2$. Now, since $\gamma_{(1,0)}^s(H)= 2$,  if $x\in X_1$, then $f'(N(x))=f(N(V(H_x))\setminus V(H_x))\ge 1$. Therefore,  $f'$ is a  $(2,1,0)$-dominating function on $G$, which implies that $\gamma_{(1,0)}^s(G\circ H)=\omega(f)=\omega(f')\ge \gamma_{(2,1,0)}(G)$.

On the other side, 
for any $\gamma_{(2,1,0)}(G)$-function  $g(W_0,W_1,W_2)$ and any universal vertex $v$ of $H$, the function $g'(W_0',W_1',W_2')$, defined by $W_1'=W_1\times \{v\}$ and $W_2'=W_2\times \{v\}$, is a $(2,1,0)$-dominating function on $G\circ H$. 
Hence, $\gamma_{(2,1,0)}(G\circ H)\le \omega(g')=\omega(g)=\gamma_{(2,1,0)}(G)$. Therefore, by  
Theorem \ref{teo-s=r-lexicographic}  and Corollary \ref{CorollarySecureSumaUno} we conclude that $\gamma_{(1,0)}^s(G\circ H)=\gamma_{(1,0,0)}^s(G\circ H)\leq \gamma_{(2,0,0)}(G\circ H)\le  \gamma_{(2,1,0)}(G\circ H)\le \gamma_{(2,1,0)}(G)$.

\vspace{.2cm}
\noindent
Case $3.$ $\gamma_{(1,0)}^s(H)\ge 3$ and $\gamma(H)=1$. As above, if $x\in X_0$, then  $f'(N(x))=f(N(V(H_x))\setminus V(H_x))\ge 2$ and, since $\gamma_{(1,0)}^s(H)\ge 3$, if $x\in X_1\cup X_2$, then  $f'(N(x))=f(N(V(H_x))\setminus V(H_x))\ge 1$. Hence,  $f'$ is a  $(2,1,1)$-dominating function on $G$. Therefore, $\gamma_{(1,0)}^s(G\circ H)=\omega(f)=\omega(f')\ge \gamma_{(2,1,1)}(G)$.

On the other side, 
for any $\gamma_{(2,1,1)}(G)$-function  $g(W_0,W_1,W_2)$, any universal vertex $v$ of $H$ and  any $v'\in V(H)\setminus \{v\}$, the function $g'(W_0',W_1')$, defined by $W_1'= W_1\times \{v\} \cup W_2\times \{v,v'\}$, is a  $(1,0)$-dominating function on $G\circ H$. Now, for any $(x,y)\in W_0'$ with $x\in W_1\cup W_2$, we can see that $g'_{(x,v)\rightarrow (x,y)}$ is a $(1,0)$-dominating function on $G\circ H$, while for any $(x,y)\in W_0\times V(H)$ there exists $x'\in W_2\cap N(x)$ or $x',x''\in W_1\cap N(x)$, and so $g'_{(x',v)\rightarrow (x,y)}$ is a $(1,0)$-dominating function on $G\circ H$. Therefore, $g'$ is 
a secure $(1,0)$-dominating function on $G\circ H$ and, as a consequence,  $\gamma_{(1,0)}(G\circ H)\le \omega(g')=\omega(g)=\gamma_{(2,1,1)}(G)$.


\vspace{.2cm}
\noindent
Case $4.$ $\gamma_{(1,0)}^s(H)=\gamma(H)=2$. If $x\in X_0\cup X_1$, then  $f'(N(x))=f(N(V(H_x))\setminus V(H_x))\ge  1$, which implies that  $f'$ is a  $(1,1,0)$-dominating function on $G$. Now, for any  $(x,y)\in X_0\times  V(H)$,  there exists $(x',y')\in M_f(x,y)$ with $x'\in N(x)\cap(X_1\cup X_2)$. Hence, for any $u\in X_0\cup \{x'\}$ we have that $f'_{x'\rightarrow x}(N(u))=f_{(x',y')\rightarrow (x,y)}(N(V(H_u))\setminus V(H_u))\ge 1$. Now, if $u\in X_1$, then as $\gamma_{(1,0)}^s(H_u)=2$ we have that $f'_{x'\rightarrow x}(N(u))=f_{(x',y')\rightarrow (x,y)}(N(V(H_u))\setminus V(H_u))\ge 1$. Hence, $f'$ is a secure $(1,1,0)$-dominating function on $G$. Therefore, $\gamma_{(1,0)}^s(G\circ H)=\omega(f)=\omega(f')\ge \gamma_{(1,1,0)}^s(G)$.

On the other side,
for any $\gamma_{(2,2,0)}(G)$-function  $g(W_0,W_1,W_2)$ and any $\gamma_{(1,0)}^s(H)$-function $h(Y_0,Y_1)$ with  $Y_1=\{v_1,v_2\}$, the function $g'(W_0',W_1')$, defined by $W_1'=W_1\times \{v_1\}\cup W_2\times Y_1$, is a $(1,0)$-dominating function on $G\circ H$. Now,  for any  $(x,y)\in W_0'\setminus W_2\times V(H)$ and $x'\in N(x)\cap (W_1\cup W_2)$, we can see that $g'_{(x',v_1)\rightarrow (x,y)}$ is a $(1,0)$-dominating function on $G\circ H$.
Furthermore, for every $(x,y)\in W_0'\cap W_2\times V(H)$  there exists $v_i\in \{Y_1\}\cap M_h(y)$ such that $g'_{(x,v_i)\rightarrow (x,y)}$ is a $(1,0)$-dominating function on $G\circ H$.
Therefore, $\gamma_{(1,0)}^s(G\circ H)\leq \omega(g')=\omega(g)=\gamma_{(2,2,0)}(G)$.

\vspace{.2cm}
\noindent
Case $5.$ $\gamma_{(1,0)}^s(H)>\gamma(H)=2$. Since $\gamma_{(1,0)}^s(H)\geq 3$, if $x\in X_0\cup X_1$, then  $f'(N(x))=f(N(V(H_x))\setminus V(H_x))\ge 2$, while  if $x\in X_2$, then $f'(N(x))=f(N(V(H_x))\setminus V(H_x))\ge 1$. Hence,  $f'$ is a  $(2,2,1)$-dominating function on $G$. Therefore, $\gamma_{(1,0)}^s(G\circ H)=\omega(f)=\omega(f')\ge \gamma_{(2,2,1)}(G)$.

In order to prove that $\gamma_{(1,0)}^s(G\circ H)\le \gamma_{(2,2,1)}(G)$, let $g(W_0,W_1,W_2)$ be a $\gamma_{(2,2,1)}(G)$-function.  
If $\gamma_{(1,0)}^s(H)>\gamma(H)=2$, then for any dominating set $S=\{v_1,v_2\}$ of $H$, the function $g'(W_0',W_1')$, defined by $W_1'=W_1\times \{v_1\}\cup W_2\times S$, is a $(2,1)$-dominating function on $G\circ H$. Hence, $\gamma_{(2,1)}(G\circ H)\leq \omega(g')=\omega(g)=\gamma_{(2,2,1)}(G)$, and so 
Corollary \ref{CorollarySecureSumaUno} leads to 
$\gamma_{(1,0)}^s(G\circ H)\leq \gamma_{(2,1)}(G\circ H)\leq \gamma_{(2,2,1)}(G).$

\vspace{.2cm}
\noindent
Case $6.$  $\gamma_{(1,0)}^s(H)=\gamma(H)=3$. As in Case 5, we deduce that  $\gamma_{(1,0)}^s(G\circ H)\ge \gamma_{(2,2,1)}(G)$.

In order to prove that $\gamma_{(1,0)}^s(G\circ H)\le \gamma_{(2,2,2)}(G)$, let $g(W_0,W_1,W_2)$ be a $\gamma_{(2,2,2)}(G)$-function and $S=\{v_1,v_2\}\subseteq V(H)$. 
The function $g'(W_0',W_1')$, defined by $W_1'=W_1\times \{v_1\}\cup W_2\times S$, is a $(2,2)$-dominating function on $G\circ H$. Hence, $\gamma_{(2,2)}(G\circ H)\leq \omega(g')=\omega(g)=\gamma_{(2,2,2)}(G)$. Therefore, 
Corollary \ref{CorollarySecureSumaUno} leads to 
$\gamma_{(1,0)}^s(G\circ H)\leq \gamma_{(1,1)}^s(G\circ H)\le \gamma_{(2,2)}(G\circ H)\leq \gamma_{(2,2,2)}(G).$


\vspace{.2cm}
\noindent
Case $7.$ $\gamma_{(1,0)}^s(H)\geq 4$ and $\gamma(H)\geq 3$. In this case, it is easy to check that $f'(N(x))=f(N(V(H_x))\setminus V(H_x))\ge 2$ for every $x\in V(G)$. Therefore,  $f'$ is a $(2,2,2)$-dominating function on $G$, which implies that  $\gamma_{(1,0)}^s(G\circ H)=\omega(f)=\omega(f')\ge \gamma_{(2,2,2)}(G)$.

Finally, as in Case 6, we can deduce that $\gamma_{(2,2)}(G\circ H)\leq  \gamma_{(2,2,2)}(G)$, and so $\gamma_{(1,0)}^s(G\circ H)\leq \gamma_{(1,1)}^s(G\circ H)\le \gamma_{(2,2)}(G\circ H)\leq \gamma_{(2,2,2)}(G).$
\end{proof}

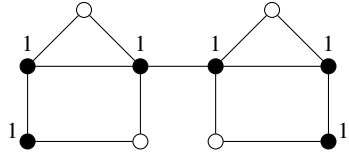
\begin{figure}[ht] 
\centering
\begin{tikzpicture}[scale=.5, transform shape]

\node [draw, shape=circle, fill=black] (a1) at  (0,0) {};
\node at (0,0.6) {\Large $1$};

\node [draw, shape=circle] (a2) at  (-1.5,1.5) {};

\node [draw, shape=circle, fill=black] (a3) at  (-3,0) {};
\node at (-3,0.6) {\Large $1$};

\node [draw, shape=circle, fill=black] (a4) at  (-3,-2) {};
\node at (-3.4,-1.7) {\Large $1$};
\node [draw, shape=circle] (a5) at  (0,-2) {};

\node [draw, shape=circle, fill=black] (b1) at  (2,0) {};
\node at (2,0.6) {\Large $1$};
\node [draw, shape=circle] (b2) at  (3.5,1.5) {};

\node [draw, shape=circle, fill=black] (b3) at  (5,0) {};
\node at (5,0.6) {\Large $1$};
\node [draw, shape=circle, fill=black] (b4) at  (5,-2) {};
\node at (5.4,-1.7) {\Large $1$};

\node [draw, shape=circle] (b5) at  (2,-2) {};

\draw(a1)--(a2)--(a3)--(a4)--(a5)--(a1);
\draw(b1)--(b2)--(b3)--(b4)--(b5)--(b1);
\draw(b3)--(b1)--(a1)--(a3);

\end{tikzpicture}
\caption{A graph $G$, where the labels asigned to the vertices  correspond to the positive weights assigned by a $\gamma_{(1,1,0)}^s(G)$-function.}\label{fig-new}
\end{figure}

\begin{figure}[ht]
\centering
\begin{tikzpicture}[scale=1.7]
\def\edgeLength{.7} 

\node[draw, shape=circle, fill=black, scale=.5] at (0,0cm)(v1){};
\node at ([shift={(0,.2)}]v1) {$2$};

\node[draw, shape=circle, fill=black, scale=.5] at (0:\edgeLength cm)(v2){};
\node at ([shift={(0,.2)}]v2) {$2$};
\node[draw, shape=circle, fill=white, scale=.5] at ([shift={(45:\edgeLength)}]v2) (v3){};
\node[draw, shape=circle, fill=white, scale=.5] at ([shift={(0:\edgeLength/2*sqrt(2))}]v2) (v4){};
\node[draw, shape=circle, fill=white, scale=.5] at ([shift={(-45:\edgeLength)}]v2) (v5){};

\node[draw, shape=circle, fill=black, scale=.5] at (180:\edgeLength cm)(v6){};
\node at ([shift={(0,.2)}]v6) {$2$};
\node[draw, shape=circle, fill=white, scale=.5] at ([shift={(135:\edgeLength)}]v6) (v7){};
\node[draw, shape=circle, fill=white, scale=.5] at ([shift={(180:\edgeLength/2*sqrt(2))}]v6) (v8){};
\node[draw, shape=circle, fill=white, scale=.5] at ([shift={(-135:\edgeLength)}]v6) (v9){};

\foreach \ind in {1,3,4,5}
{
\draw (v2)--(v\ind);
}

\foreach \ind in {1,7,8,9}
{
\draw (v6)--(v\ind);
}

\node at ([shift={(0,-.5)}]v1) {$G_1$};

\end{tikzpicture}
\hspace{.7cm}
\begin{tikzpicture}[scale=1.7]
\def\edgeLength{.7} 

\node[draw, shape=circle, fill=black, scale=.5] at (0,0cm)(v1){};
\node at ([shift={(0,.2)}]v1) {$1$};

\node[draw, shape=circle, fill=black, scale=.5] at (0:\edgeLength cm)(v2){};
\node at ([shift={(0,.2)}]v2) {$2$};
\node[draw, shape=circle, fill=white, scale=.5] at ([shift={(45:\edgeLength)}]v2) (v3){};
\node[draw, shape=circle, fill=white, scale=.5] at ([shift={(0:\edgeLength/2*sqrt(2))}]v2) (v4){};
\node[draw, shape=circle, fill=white, scale=.5] at ([shift={(-45:\edgeLength)}]v2) (v5){};

\node[draw, shape=circle, fill=black, scale=.5] at (180:\edgeLength cm)(v6){};
\node at ([shift={(0,.2)}]v6) {$1$};

\node[draw, shape=circle, fill=black, scale=.5] at (180:2*\edgeLength cm)(v7){};
\node at ([shift={(0,.2)}]v7) {$2$};
\node[draw, shape=circle, fill=white, scale=.5] at ([shift={(135:\edgeLength)}]v7) (v8){};
\node[draw, shape=circle, fill=white, scale=.5] at ([shift={(180:\edgeLength/2*sqrt(2))}]v7) (v9){};
\node[draw, shape=circle, fill=white, scale=.5] at ([shift={(-135:\edgeLength)}]v7) (v10){};

\foreach \ind in {1,3,4,5}
{
\draw (v2)--(v\ind);
}

\foreach \ind in {6,8,9,10}
{
\draw (v7)--(v\ind);
}

\draw (v6)--(v1);

\node at ($(v1)!0.5!(v6)$)(vlabel) {};
\node at ([shift={(0,-.5cm)}]vlabel) {$G_2$};

\end{tikzpicture}
\vspace{.7cm}
\begin{tikzpicture}[scale=1.7]
\def\edgeLength{.7} 

\node[draw, shape=circle, fill=black, scale=.5] at (0,0 cm)(v11){};
\node at ([shift={(0,.2)}]v11) {$2$};
\node[draw, shape=circle, fill=black, scale=.5] at ([shift={(0:\edgeLength)}]v11)(v12){};
\node at ([shift={(0,.2)}]v12) {$2$};
\node[draw, shape=circle, fill=white, scale=.5] at ([shift={(45:\edgeLength)}]v12) (v13){};
\node[draw, shape=circle, fill=white, scale=.5] at ([shift={(0:\edgeLength/2*sqrt(2))}]v12) (v14){};
\node[draw, shape=circle, fill=white, scale=.5] at ([shift={(-45:\edgeLength)}]v12) (v15){};

\node[draw, shape=circle, fill=black, scale=.5] at ([shift={(180:\edgeLength)}]v11)(v16){};
\node at ([shift={(0,.2)}]v16) {$2$};
\node[draw, shape=circle, fill=white, scale=.5] at ([shift={(135:\edgeLength)}]v16) (v17){};
\node[draw, shape=circle, fill=white, scale=.5] at ([shift={(180:\edgeLength/2*sqrt(2))}]v16) (v18){};
\node[draw, shape=circle, fill=white, scale=.5] at ([shift={(-135:\edgeLength)}]v16) (v19){};

\foreach \ind in {11,13,14,15}
{
\draw (v12)--(v\ind);
}

\foreach \ind in {11,17,18,19}
{
\draw (v16)--(v\ind);
}

\node[draw, shape=circle, fill=black, scale=.5] at (5.3*\edgeLength,0cm)(v1){};
\node at ([shift={(0,.2)}]v1) {$1$};
\node[draw, shape=circle, fill=black, scale=.5] at ([shift={(0:\edgeLength)}]v1)(v2){};
\node at ([shift={(0,.2)}]v2) {$2$};
\node[draw, shape=circle, fill=white, scale=.5] at ([shift={(45:\edgeLength)}]v2) (v3){};
\node[draw, shape=circle, fill=white, scale=.5] at ([shift={(0:\edgeLength/2*sqrt(2))}]v2) (v4){};
\node[draw, shape=circle, fill=white, scale=.5] at ([shift={(-45:\edgeLength)}]v2) (v5){};

\node[draw, shape=circle, fill=black, scale=.5] at ([shift={(180:\edgeLength)}]v1)(v6){};
\node at ([shift={(0,.2)}]v6) {$1$};

\node[draw, shape=circle, fill=black, scale=.5] at ([shift={(180:2*\edgeLength)}]v1)(v7){};
\node at ([shift={(0,.2)}]v7) {$2$};
\node[draw, shape=circle, fill=white, scale=.5] at ([shift={(135:\edgeLength)}]v7) (v8){};
\node[draw, shape=circle, fill=white, scale=.5] at ([shift={(180:\edgeLength/2*sqrt(2))}]v7) (v9){};
\node[draw, shape=circle, fill=white, scale=.5] at ([shift={(-135:\edgeLength)}]v7) (v10){};

\foreach \ind in {1,3,4,5}
{
\draw (v2)--(v\ind);
}

\foreach \ind in {6,8,9,10}
{
\draw (v7)--(v\ind);
}

\draw (v6)--(v1);

\draw (v9)--(v14);

\node at ($(v15)!0.5!(v10)$)(vlabel) {};
\node at ([shift={(0,-.3cm)}]vlabel) {$G_3$};
\node at ([shift={(0,0.3cm)}]v13) {};
\end{tikzpicture}

\caption{For any $i\in \{1,2,3\}$, the labels asigned to the vertices of $G_i$ correspond to the positive weights assigned by a $\gamma_{(1,0)}^s(G_i\circ H)$-function to the different copies of $H$ in $G_i\circ H$, where $H$ is any graph with $\gamma_{(1,0)}^s(H)=\gamma(H)=3$.}\label{LEXIC-raro-Secure}
\end{figure}
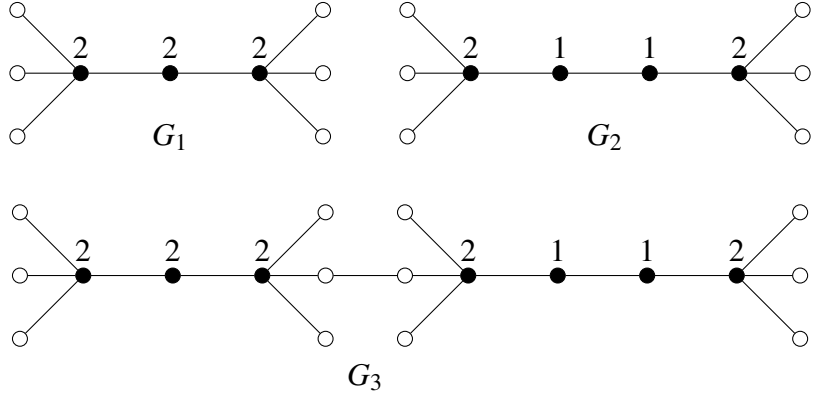

In order to show the behaviour of $\gamma_{(1,0)}^s(G\circ H)$ when $\gamma_{(1,0)}^s(H)=\gamma(H)=2$, we consider the following examples. 
For the graph $G$ shown in Figure \ref{fig-new}, $\gamma_{(1,0)}^s(G\circ H)=\gamma_{(1,1,0)}^s(G)=6<8=\gamma_{(2,2,0)}(G)$, while  
$\gamma_{(1,1,0)}^s(G\cup C_4)=9<10=\gamma_{(1,0)}^s((G\cup C_4)\circ H)<12=\gamma_{(2,2,0)}(G\cup C_4)$
and
$\gamma_{(1,0)}^s(C_4\circ H)=\gamma_{(2,2,0)}(C_4)=4 >3=\gamma_{(1,1,0)}^s(C_4)$.

Analogously, for the case $\gamma_{(1,0)}^s(H)=\gamma(H)=3$, we consider the graphs $G_1$, $G_2$ and $G_3$ illustrated in Figure \ref{LEXIC-raro-Secure}. 
 The weights shown in Figure \ref{LEXIC-raro-Secure} correspond to the weights assigned by a $\gamma_{(1,0)}^s(G_i\circ H)$-function to the different copies of $H$ in $G_i\circ H$. In particular, $\gamma_{(1,0)}^s(G_1\circ H)=\gamma_{(2,2,2)}(G_1)=6$, $\gamma_{(1,0)}^s(G_2\circ H)=\gamma_{(2,2,1)}(G_2)=6$ and $\gamma_{(2,2,1)}(G_3)=11<12=\gamma_{(1,0)}^s(G_3\circ H)<14=\gamma_{(2,2,2)}(G_3).$

We now discuss some particular cases of Theorem \ref{teo-equality-gamma(1,0)}.

\begin{corollary}
The following statements hold for any  integers $n,r\ge 4$ and a nontrivial graph $H$.

\begin{itemize}
\item $\gamma_{(1,0)}^s(K_n\circ H)=\left\{ \begin{array}{ll}
             1 & \text{if } H \text{ is a complete graph},\\[5pt]
             2 & \text{if } \,  \gamma_{(1,0)}^s(H)>\gamma(H)=1 \, \text{ or } \, \gamma_{(1,0)}^s(H)=\gamma(H)=2 ,\\[5pt]
             3 & \mbox{otherwise.}
                                  \end{array}\right.$
                                  \\
     \\                               
\item $\gamma_{(1,0)}^s(K_{1,n-1}\circ H)=\left\{ \begin{array}{ll}
             2 & \text{if } \,  \gamma_{(1,0)}^s(H)\leq 2,\\[5pt]
             4 & \text{if } \,  \gamma_{(1,0)}^s(H)\geq 4 \, \text{ and } \, \gamma(H)\geq 3 ,\\[5pt]
             3 & \mbox{otherwise.}
                                  \end{array}\right.$
                   \\  
                   \\             
                                  \item $\gamma_{(1,0)}^s(K_{2,n}\circ H)=\left\{ \begin{array}{ll}
             2 & \text{if } H \text{ is a complete graph},\\[5pt]
             4 & \text{if } \,  \gamma_{(1,0)}^s(H)\geq 2 \, \text{ and } \, \gamma(H)\geq 2 ,\\[5pt]
             3 & \mbox{otherwise.}
                                  \end{array}\right.$
\\
\\
\item $\gamma_{(1,0)}^s(K_{3,n}\circ H)=\left\{ \begin{array}{ll}
             3 & \text{if } H \text{ is a complete graph},\\[5pt]
                       4 & \mbox{otherwise.}
                                  \end{array}\right.$
\\
\\
\item $\gamma_{(1,0)}^s(K_{n,r}\circ H)=4$.
\end{itemize}
\end{corollary}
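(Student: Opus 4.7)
The plan is to deduce each bullet from Theorem~\ref{teo-equality-gamma(1,0)} by computing, for each particular $G$ in the list, the seven parameters $\gamma_{(1,0,0)}^s(G)$, $\gamma_{(2,1,0)}(G)$, $\gamma_{(2,1,1)}(G)$, $\gamma_{(1,1,0)}^s(G)$, $\gamma_{(2,2,0)}(G)$, $\gamma_{(2,2,1)}(G)$, and $\gamma_{(2,2,2)}(G)$ that appear in cases (i)--(vii). These are elementary optimizations: for $K_n$ a single weighted vertex (possibly aided by one or two weight-$1$ vertices) witnesses the upper bounds; for $K_{1,n-1}$ the center must carry most of the weight; and for the complete bipartite graphs $K_{r,s}$ with $r\ge 2$ each vertex has all its neighbours in the opposite part, which forces any efficient solution to spread weight across both sides of the bipartition. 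The resulting small table substitutes into cases (i), (ii), (iii), (v), (vii) of the theorem to yield each bullet of the corollary immediately.

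What remains are the two ``sandwich'' cases (iv) and (vi) of Theorem~\ref{teo-equality-gamma(1,0)}, which a~priori only provide two-sided bounds. For $G\in\{K_n,K_{3,n},K_{n,r}\}$ the two bounds coincide and nothing further is needed. The only nontrivial work is located in case~(vi) with $G=K_{1,n-1}$, where $\gamma_{(2,2,1)}(K_{1,n-1})=3$ and $\gamma_{(2,2,2)}(K_{1,n-1})=4$ disagree and the answer must be the lower one, and in case~(iv) with $G=K_{2,n}$, where $\gamma_{(1,1,0)}^s(K_{2,n})=3$ and $\gamma_{(2,2,0)}(K_{2,n})=4$ disagree and the answer must be the upper one.

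To push case~(vi) for $K_{1,n-1}$ down to~$3$, I exhibit an explicit secure $(1,0)$-dominating function of weight~$3$ on $K_{1,n-1}\circ H$: letting $c$ be the centre of the star and $S=\{v_1,v_2,v_3\}$ be a $\gamma_{(1,0)}^s(H)$-set (in particular, a dominating set of $H$), I set $f(c,v_i)=1$ for $i\in\{1,2,3\}$ and $f=0$ elsewhere. Domination is immediate, since every vertex of $H_c$ sees a member of $S$ inside $H_c$ and every leaf-copy vertex sees all three weights through $c$; the required secure moves for targets $(c,y)\notin V_0$ are supplied by the secure-dominating property of $S$ in $H$, and for leaf-copy targets $(\ell,y)$ one may move any of the three weights to $(\ell,y)$ and use the fact that the remaining two together with $(\ell,y)$ still dominate $K_{1,n-1}\circ H$. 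To promote case~(iv) for $K_{2,n}$ up to~$4$, I argue by contradiction. Suppose $f$ is a secure $(1,0)$-dominating function on $K_{2,n}\circ H$ of weight~$3$ satisfying Lemma~\ref{lem-vertice<=2-secure}. The induced function $f'$ on $K_{2,n}$ has weight~$3$ and, as in the proof of Theorem~\ref{teo-equality-gamma(1,0)}(iv), is a secure $(1,1,0)$-dominating function; a direct enumeration shows that up to symmetry the only such weight-$3$ configuration is $f'(a_1)=f'(a_2)=f'(b_1)=1$, so $f$ is supported at three singletons $(a_1,\alpha_1),(a_2,\alpha_2),(b_1,\beta_1)$. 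Since $\gamma(H)\ge 2$, the vertex $\alpha_1$ is not universal in $H$, so there exists $y\ne\alpha_1$ with $y\not\sim_H\alpha_1$; the vertex $(a_1,y)\in V_0$ then has $(b_1,\beta_1)$ as its only positive neighbour, because $(a_2,\cdot)$ is never adjacent to $(a_1,\cdot)$ in $K_{2,n}\circ H$. The only admissible secure move is therefore $(b_1,\beta_1)\to(a_1,y)$; but after this move, choosing $y'\ne\alpha_2$ with $y'\not\sim_H\alpha_2$ (which exists by the same reason) leaves $(a_2,y')$ with no positive neighbour, contradicting $(1,0)$-domination.

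The hardest step is the lower bound for $K_{2,n}$ in case~(iv): pinning down the unique (up to symmetry) weight-$3$ secure $(1,1,0)$-dominating function of $K_{2,n}$ requires a short but careful enumeration, and the subsequent contradiction relies on invoking $\gamma(H)\ge 2$ simultaneously on both parts of the bipartition in order to rule out every candidate secure move.
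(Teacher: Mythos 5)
Your proposal is correct and follows the same backbone as the paper's proof: compute the relevant parameters $\gamma_{(1,0,0)}^s(G)$, $\gamma_{(2,1,0)}(G)$, $\gamma_{(2,1,1)}(G)$, $\gamma_{(1,1,0)}^s(G)$, $\gamma_{(2,2,0)}(G)$, $\gamma_{(2,2,1)}(G)$, $\gamma_{(2,2,2)}(G)$ for each host graph and feed them into Theorem \ref{teo-equality-gamma(1,0)}. Where you differ is precisely in the two subcases where the theorem only gives a two-sided sandwich, and there your version is more complete than the published one. For $K_{2,n}$ with $\gamma_{(1,0)}^s(H)=\gamma(H)=2$, the paper disposes of the case with a bare ``it is not difficult to see'' assertion that the answer is $\gamma_{(2,2,0)}(K_{2,n})=4$; you actually prove it, and your argument checks out: the enumeration showing that the unique weight-$3$ secure $(1,1,0)$-dominating function on $K_{2,n}$ (up to symmetry) puts weight $1$ on $a_1,a_2,b_1$ is accurate (the $(2,1)$-splits fail securability or domination, and the supports $\{a_1,b_1,b_2\}$ and $\{b_1,b_2,b_3\}$ fail for $n\ge 4$), and the two-step contradiction is sound because $a_1\not\sim a_2$ in $K_{2,n}$ forces $(b_1,\beta_1)$ to be the only possible defender of $(a_1,y)$ once $y\not\sim_H\alpha_1$, after which non-universality of $\alpha_2$ kills the resulting configuration. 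More significantly, you repair a genuine lacuna in the paper: for $K_{1,n-1}$ with $\gamma_{(1,0)}^s(H)=\gamma(H)=3$, which falls under the ``$3$ otherwise'' clause of the second bullet, the paper's listed facts $\gamma_{(2,2,1)}(K_{1,n-1})=3$ and $\gamma_{(2,2,2)}(K_{1,n-1})=4$ combined with Theorem \ref{teo-equality-gamma(1,0)}(vi) only yield $3\le \gamma_{(1,0)}^s(K_{1,n-1}\circ H)\le 4$, and the published proof never closes this gap; your explicit weight-$3$ function, placing a $\gamma_{(1,0)}^s(H)$-set $S$ on the central copy $H_c$, does close it, and your verification is correct (secure moves within $H_c$ come from the secure-domination property of $S$ in $H$, while a move to any leaf-copy vertex leaves two weights in $H_c$ that, together with the moved guard, dominate all of $K_{1,n-1}\circ H$). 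In short: same route as the paper, but your write-up supplies proofs exactly where the paper is terse or silent, at the modest cost of a short case enumeration for $K_{2,n}$.
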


\begin{proof}
It is not difficult to see that if $\gamma_{(1,0)}^s(H)=\gamma(H)=2$, then $\gamma_{(1,0)}^s(K_{2,n}\circ H)=\gamma_{(2,2,0)}(K_{2,n})=4$.
For the remaining cases, the result follows from Theorem \ref{teo-equality-gamma(1,0)} by considering the following facts.

\begin{itemize}
\item $\gamma_{(1,0,0)}^s(K_n)=1$, $\gamma_{(2,1,0)}(K_n)=\gamma_{(2,1,1)}(K_n)=\gamma_{(1,1,0)}^s(K_n)=\gamma_{(2,2,0)}(K_n)=2$ and $\gamma_{(2,2,1)}(K_n)=\gamma_{(2,2,2)}(K_n)=3$. 
\\
\item  $\gamma_{(1,0,0)}^s(K_{1,n-1})=\gamma_{(2,1,0)}(K_{1,n-1})=\gamma_{(1,1,0)}^s(K_{1,n-1})=\gamma_{(2,2,0)}(K_{1,n-1})=2$, $\gamma_{(2,1,1)}(K_{1,n-1})=\gamma_{(2,2,1)}(K_{1,n-1})=3$ and $\gamma_{(2,2,2)}(K_{1,n-1})=4$. 
\\
\item $\gamma_{(1,0,0)}^s(K_{2,n})=2$, $\gamma_{(2,1,0)}(K_{2,n})=\gamma_{(2,1,1)}(K_{2,n})=3$ and $\gamma_{(2,2,1)}(K_{2,n})=\gamma_{(2,2,2)}(K_{2,n})=4$.
\\
\item $\gamma_{(1,0,0)}^s(K_{3,n})=3$ and $\gamma_{(2,1,0)}(K_{3,n})=\gamma_{(2,1,1)}(K_{3,n})=\gamma_{(1,1,0)}^s(K_{3,n})=\gamma_{(2,2,0)}(K_{3,n})=\gamma_{(2,2,1)}(K_{3,n})=\gamma_{(2,2,2)}(K_{3,n})=4$.
\\
\item $\gamma_{(1,0,0)}^s(K_{n,r})=\gamma_{(2,1,0)}(K_{n,r})=\gamma_{(1,1,0)}^s(K_{n,r})=\gamma_{(2,2,0)}(K_{n,r})=\gamma_{(2,1,1)}(K_{n,r})=\gamma_{(2,2,1)}(K_{n,r})=\gamma_{(2,2,2)}(K_{n,r})=4$.
\end{itemize}
\end{proof}

Next we consider the particular case when $G$ is a path. As we will see, this case  has been partially studied in previous works. 

\newpage

\begin{theorem}\label{teo-equality-gamma(1,0)-Paths}
For any integer $n\ge 6$ and any nontrivial graph $H$, the following statements hold.
 
\begin{enumerate}
\item[{\rm (i)}]\mbox{\rm \cite{MR1991720}} If $\gamma_{(1,0)}^s(H)=1$, i.e., $H$ is a complete graph, then $\gamma_{(1,0)}^s(P_n\circ H)=\gamma_{(1,0,0)}^s(P_n)=
\left\lceil  \frac{3n}{7} \right\rceil$.
\\
\item[{\rm (ii)}]\mbox{\rm \cite{SecureLexicographicDMGT}} If $\gamma_{(1,0)}^s(H)=2$ and $\gamma(H)=1$, then $\gamma_{(1,0)}^s(P_n\circ H)=
2\left\lceil  \frac{n}{3} \right\rceil$.
\\
\item[{\rm (iii)}]\mbox{\rm \cite{SecureLexicographicDMGT}} If $\gamma_{(1,0)}^s(H)\ge 3$ and $\gamma(H)=1$, then $\gamma_{(1,0)}^s(P_n\circ H)= \left\{ \begin{array}{ll}
             \frac{2n}{3}+1 & \text{if } \,  n\equiv 0 \pmod 3,\\[5pt]
             2\lceil\frac{n}{3}\rceil & \mbox{otherwise.}
                                  \end{array}\right.$ 
                                  \\
\item[{\rm (iv)}]\mbox{\rm \cite{SecureLexicographicDMGT}} If $\gamma_{(1,0)}^s(H)=\gamma(H)=2$, then $\gamma_{(1,0)}^s(P_n\circ H)=2\left\lfloor \frac{n+2}{3} \right\rfloor$.
\\
\item[{\rm (v)}] If $\gamma_{(1,0)}^s(H)>\gamma(H)=2$, then $$\gamma_{(1,0)}^s(P_n\circ H)=\gamma_{(2,2,1)}(P_n)=\left\{ \begin{array}{ll}
             n-\lfloor\frac{n}{7}\rfloor+1 & \text{if } \,  n\equiv 1,2 \pmod 7,\\[5pt]
             n-\lfloor\frac{n}{7}\rfloor & \mbox{otherwise.}
                                  \end{array}\right.$$
\item[{\rm (vi)}] If  $\gamma_{(1,0)}^s(H)=\gamma(H)=3$, then $$\gamma_{(1,0)}^s(P_n\circ H)= \left\{ \begin{array}{ll}
             n-\left\lfloor \frac{n}{11}\right \rfloor +1 & \text{if } \,  n\equiv 1,2,5 \pmod {11},\\[5pt]
             n-\left\lfloor \frac{n}{11}\right \rfloor  & \mbox{otherwise.}
                                  \end{array}\right.$$
\item[{\rm (vii)}] If $\gamma_{(1,0)}^s(H)\geq 4$ and $\gamma(H)\geq 3$, then $$\gamma_{(1,0)}^s(P_n\circ H)=\gamma_{(2,2,2)}(P_n)=\left\{ \begin{array}{ll}
n & if \, n\equiv 0\pmod 4,\\[5pt]
n+1 & if \,  n\equiv 1,3\pmod 4,\\[5pt]
n+2  & if \,  n\equiv 2\pmod 4.
\end{array}\right. $$
\end{enumerate}
\end{theorem}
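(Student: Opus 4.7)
Parts (i)--(iv) are cited verbatim from the indicated references, so no new proof is required. For parts (v) and (vii), items (v) and (vii) of Theorem \ref{teo-equality-gamma(1,0)} reduce the problem to computing $\gamma_{(2,2,1)}(P_n)$ and $\gamma_{(2,2,2)}(P_n)$, respectively. For (vi), Theorem \ref{teo-equality-gamma(1,0)}(vi) gives only the sandwich $\gamma_{(2,2,1)}(P_n)\le \gamma_{(1,0)}^s(P_n\circ H)\le \gamma_{(2,2,2)}(P_n)$, and the claimed value lies strictly between these bounds for most residues of $n$ modulo $11$, so a dedicated argument on $P_n\circ H$ is needed.

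For (v), the plan is to establish $\gamma_{(2,2,1)}(P_n)$ by exhibiting the period-$7$ pattern $(0,2,1,0,1,2,0)$ of weight $6$ and verifying the three defining inequalities vertex by vertex; small end adjustments handle $n\bmod 7$, producing the extra $+1$ precisely when $n\equiv 1,2\pmod 7$. The matching lower bound is obtained by partitioning $V(P_n)$ into consecutive blocks of length $7$ and showing, via a finite case analysis on how the $2$'s and $1$'s may be distributed inside a block while meeting the $(2,2,1)$-domination constraints (and their boundary interactions with neighbouring blocks), that each block carries weight at least $6$. For (vii), the treatment is analogous with the period-$4$ pattern $(0,2,2,0)$ of weight $4$; the endpoint constraints $f(v_2)\ge 2$ and $f(v_{n-1})\ge 2$ force the $+1$ or $+2$ boundary corrections for $n\not\equiv 0\pmod 4$, and the lower bound follows by summing the inequalities $f(v_{i-1})+f(v_{i+1})\ge 2$ over a well-chosen index set, together with the endpoint requirements.

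For (vi), the main obstacle, I would argue directly on $P_n\circ H$. The upper bound is by explicit construction: fix a $\gamma_{(1,0)}^s(H)$-set $S_H=\{a,b,c\}$ and build a function on $V(P_n\circ H)$ whose induced weights on $V(P_n)$ follow a period-$11$ pattern of total weight $10$; the labels on the $2$-weighted copies are chosen so that any two consecutive $2$-weighted copies jointly realize $S_H$, and the occasional $1$-weighted copy bridges them, which makes every required defence move preserve domination of the adjacent copies of $H$. Boundary corrections for the residues of $n$ modulo $11$ match the claimed formula. For the lower bound, I would take any secure $(1,0)$-dominating function $f$ satisfying Lemma \ref{lem-vertice<=2-secure}, project $g(x)=f(V(H_x))\in\{0,1,2\}$ onto $V(P_n)$, and observe that $g$ is $(2,2,1)$-dominating. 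The key new input is that $\gamma(H)=3$, so neither $1$-weighted nor $2$-weighted copies dominate $H$ internally; this yields a sharper local constraint on $g$: whenever $g(x_i)=0$ and the secure defence of a vertex of $H_{x_i}$ is provided by a vertex of a $1$-weighted copy $H_{x_{i\pm 1}}$, the copy beyond must carry additional weight in order to re-dominate the vertices of $H_{x_{i\pm 1}}$ exposed after the move.

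The hardest step is precisely this lower bound: one has to translate the stronger local constraint into a complete list of forbidden short patterns for $g$, and then prove the average-rate bound of $10/11$ per copy by a discharging or block-decomposition argument, with boundary terms giving the exact formula. The enumeration must simultaneously respect the $(2,2,1)$-domination inequalities and the extra secure-defence compatibility, and ruling out every denser pattern is where the combinatorial difficulty concentrates.
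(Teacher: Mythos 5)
Parts (i)--(iv) you handle as the paper does (by citation), and for (v) and (vii) your reduction via Theorem \ref{teo-equality-gamma(1,0)} is exactly the paper's first step; the difference is that the paper then simply quotes the values of $\gamma_{(2,2,1)}(P_n)$ and $\gamma_{(2,2,2)}(P_n)$ from \cite{w-domination}, whereas you propose to re-derive them. That is legitimate but redundant, and one detail of your sketch is shaky as stated: for the lower bound in (v) you cannot just partition $V(P_n)$ into fixed consecutive blocks of length $7$ and claim each block carries weight at least $6$, because weight can migrate across the cut points; one needs sliding windows, discharging, or induction (note that in the extremal pattern every window of $7$ consecutive vertices has weight exactly $6$, which hints at the right formulation). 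Since these path values are citable, this is a minor point.

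The genuine gap is in (vi), and you concede it yourself: your lower bound consists of projecting $f$ to $g(x)=f(V(H_x))$ (the paper's $f'$, via Lemma \ref{lem-vertice<=2-secure}), observing $g$ is $(2,2,1)$-dominating, and then asserting that a ``complete list of forbidden short patterns'' plus ``a discharging or block-decomposition argument'' would yield the rate $10/11$ with exact boundary terms --- but none of this is carried out, and the exact formula $n-\lfloor n/11\rfloor$ with the $+1$ correction precisely at $n\equiv 1,2,5 \pmod{11}$ is substantially harder than an asymptotic density bound. The paper closes this gap differently: writing $n=11q+r$, it proceeds by induction on $q$. The base cases ($q=0$, $r\in\{6,\dots,10\}$, and $n=11,\dots,16$) are settled by exhibiting explicit optimal weight sequences (e.g., $02101210120$ for $P_{11}$), partly via computer search; the inductive step uses the fact that $02101210120$ is the \emph{only} weight-$10$ sequence on $P_{11}$, so any hypothetical function on $P_{11(q+1)+r}\circ H$ of weight below $10(q+1)+r$ places weight $k_2\ge 10$ on the first eleven copies, and shifting the surplus $k_2-10$ onto $H_{x_{13}}$ produces a secure $(1,0)$-dominating function on the sub-path product induced by $\{x_{12},\dots,x_n\}\times V(H)$ of weight below $10q+r$, contradicting the induction hypothesis; the matching upper bound comes from concatenating optimal sequences for $P_{11q+r}$ and $P_{11}$, using Proposition \ref{obs-subgraph-general-Secure} (spanning subgraph monotonicity) to pass from the disjoint union to $P_n$. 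So while your reduction and upper-bound construction point in the right direction, the decisive lower-bound machinery for (vi) is missing from your proposal, and without it the statement is unproven.
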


\begin{proof}

The proofs of (v) and (vii) are derived by combining  Theorem \ref{teo-equality-gamma(1,0)} with the values of $\gamma_{(2,2,1)}(P_n)$ and $\gamma_{(2,2,2)}(P_n)$ obtained in \cite{w-domination}. It remains to prove (vi).

Assume that $\gamma_{(1,0)}^s(H)=\gamma(H)=3$ and let $f(V_0,V_1)$ be a $\gamma_{(1,0)}^s(P_n\circ H)$-function which satisfies Lemma~\ref{lem-vertice<=2-secure}. Let  $f'(X_0,X_1,X_2)$ be the function defined on $P_n$ by $X_1=\{x\in V(P_n): f(V(H_x))=1\}$ and $X_2=\{x\in V(P_n): f(V(H_x))=2\}$. Notice that $\gamma_{(1,0)}^s(P_n\circ H)=\omega(f)=\omega(f')$.
Let $n=11q+r$ with $r\in \{0,\dots, 10\}$. With this notation in mind, we proceed by induction on $q$.

It is not difficult to check that the result follows for $q=0$ and $r \in \{6,\dots, 10\}$. For these cases, possible sequences of weights assigned by $f'$ to consecutive vertices of $P_r$ are  $021120$, $1200220$, $02200220$, $021012120$ and $0210121012$, respectively.
The result also follows for $q=1$ and $r=0$, i.e., $n=11$. In this case, the only possible sequence  of weights assigned by $f'$ to consecutive vertices of $P_{11}$ is $02101210120$. The certainty that the result holds for $q=1$ and $r \in \{3,4,6,\dots, 10\}$ comes from a computer search. For these cases, since $P_{11+r}$ can be obtained by connecting a leaf of $P_{11}$ with a leaf $P_r$, possible sequences of weights assigned by $f'$ to consecutive vertices of $P_{11+r}$ are obtained by concatenating the sequences of weights associated to $P_{11}$ and $P_r$, where the sequences for $r=3$ and $r=4$ are $120$, $0220$, respectively. In summary, for any 
$r \in \{0,3,4,6,\dots, 10\}$, we have that 
$\gamma_{(1,0)}^s(P_{11+r}\circ H)=10+r$. Analogously, among the possible sequences of weights assigned by $f'$ to consecutive vertices of $P_{12}$, $P_{13}$ and $P_{16}$,
a computer search gives, for instance,  $022101200220$, $0210210220120$ and $0211200220021120$, respectively. Thus, 
$\gamma_{(1,0)}^s(P_{12}\circ H)=12$, $\gamma_{(1,0)}^s(P_{13}\circ H)=13$ and $\gamma_{(1,0)}^s(P_{16}\circ H)=16$, which completes the base case.

Assume that $q\ge 2$ and the statement holds for any $q'$ such that
$1\le q'\le q$. We differentiate two cases.

\vspace{.2cm}
\noindent
Case $1.$  $r\not\in \{1,2,5\}$. In this case,   $$\gamma_{(1,0)}^s(P_{11q+r}\circ H)=10q+r.$$
Let $n=11(q+1)+r$, $k_1=\gamma_{(1,0)}^s(P_{n}\circ H)$, $P_{n}=x_1x_2\dots x_{n}$ and $k_2=f'(\{x_1,\dots, x_{11}\})$. 
Suppose that $k_1<10(q+1)+r$.
Since $02101210120$ is the only possible sequence of weights assigned by $f'$ to consecutive vertices of $P_{11}$, we have that $k_2\ge 10$, and 
$$
(k_2-10)+f'(\{x_{12},\dots , x_n\})=k_1-10<10q+r.
$$
Since the function $g$, defined as  $g(V(H_{x_{13}}))=f(V(H_{x_{13}}))+k_2-10$ and 
$g(V(H_{x_{i}}))=f(V(H_{x_{i}}))$ for every $i\in \{12\}\cup \{14,\dots, n\}$,  is a secure $(1,0)$-dominating function on the subgraph of  $P_{n}\circ H$ induced by $\{x_{12},\dots , x_n\}\times V(H)$, we can conclude that $\gamma_{(1,0)}^s(P_{11q+r}\circ H)\le \omega(g)=k_1-10<10q+r$, which contradicts the hypothesis. Thus,  
$\gamma_{(1,0)}^s(P_{n}\circ H)\ge  10(q+1)+r$.
To conclude the proof, we only need to observe that 
$P_{n}$ is obtained by connecting a leaf of $P_{11q+r}$ with a leaf of $P_{11}$, and so, by hypothesis, 
$\gamma_{(1,0)}^s(P_{n}\circ H)\le \gamma_{(1,0)}^s(P_{11q+r}\circ H)+\gamma_{(1,0)}^s(P_{11}\circ H)=10(q+1)+r$.
Therefore, the proof of this case is complete.

\vspace{.2cm}
\noindent
Case $2.$  $r\in  \{1,2,5\}$. This case is completely analogous to Case 1. The only difference is the induction hypothesis, which states that  $\gamma_{(1,0)}^s(P_{11q+r}\circ H)=10q+r+1.$  
Hence, we take $n=11(q+1)+r$ and following the procedure described above,  we deduce that   $\gamma_{(1,0)}^s(P_{n}\circ H)=10(q+1)+r+1$, which completes the proof.
\end{proof}

The next result concerns the case when $G$ is a cycle.

\begin{theorem}\label{teo-equality-gamma(1,0)-Cycles}
For any integer $n\ge 6$ and a nontrivial graph $H$, the following statements hold.
 
\begin{enumerate}
\item[{\rm (i)}]\mbox{\rm \cite{MR1991720}} If $\gamma_{(1,0)}^s(H)=1$, i.e., $H$ is a complete graph, then $\gamma_{(1,0)}^s(C_n\circ H)=\gamma_{(1,0,0)}^s(C_n)=
\left\lceil  \frac{3n}{7} \right\rceil$.
\\
\item[{\rm (ii)}]\mbox{\rm \cite{SecureLexicographicDMGT}} If $\gamma_{(1,0)}^s(H)\ge 2$ and $\gamma(H)=1$, then $\gamma_{(1,0)}^s(C_n\circ H)=  \left\lceil\frac{2n}{3}\right\rceil $.
\\
\item[{\rm (iii)}]\mbox{\rm \cite{SecureLexicographicDMGT}} If $\gamma_{(1,0)}^s(H)=\gamma(H)=2$, then $\gamma_{(1,0)}^s(C_n\circ H)=2\left\lfloor \frac{n+2}{3} \right\rfloor$.
\\
\item[{\rm (iv)}] If $\gamma_{(1,0)}^s(H)>\gamma(H)=2$, then $$\gamma_{(1,0)}^s(C_n\circ H)=\gamma_{(2,2,1)}(C_n)=\left\{ \begin{array}{ll}
             n-\lfloor\frac{n}{7}\rfloor+1 & \text{if } \,  n\equiv 1,2 \pmod 7,\\[5pt]
             n-\lfloor\frac{n}{7}\rfloor & \mbox{otherwise.}
                                  \end{array}\right.$$
                         
\item[{\rm (v)}] If  $\gamma_{(1,0)}^s(H)=\gamma(H)=3$, then $$\gamma_{(1,0)}^s(C_n\circ H)= \left\{ \begin{array}{ll}
             n-\left\lfloor \frac{n}{11}\right \rfloor +1 & \text{if } \,  n\equiv 1,2,5 \pmod {11},\\[5pt]
             n-\left\lfloor \frac{n}{11}\right \rfloor  & \mbox{otherwise.}
                                  \end{array}\right.$$
                                  
\item[{\rm (vi)}] If $\gamma_{(1,0)}^s(H)\geq 4$ and $\gamma(H)\geq 3$, then $\gamma_{(1,0)}^s(C_n\circ H)=\gamma_{(2,2,2)}(C_n)=n. $
\end{enumerate}
\end{theorem}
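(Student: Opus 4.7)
The plan is to split the theorem into its seven sub-cases; parts (i), (ii) and (iii) are cited from the literature, so only (iv), (v) and (vi) require fresh work. Parts (iv) and (vi) are immediate corollaries of Theorem \ref{teo-equality-gamma(1,0)} (v) and (vii), since those parts already assert the equalities $\gamma_{(1,0)}^s(G\circ H)=\gamma_{(2,2,1)}(G)$ and $\gamma_{(1,0)}^s(G\circ H)=\gamma_{(2,2,2)}(G)$; one just substitutes the values of $\gamma_{(2,2,1)}(C_n)$ and $\gamma_{(2,2,2)}(C_n)$ computed in \cite{w-domination}.

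The substantive case is (v). Here Theorem \ref{teo-equality-gamma(1,0)}(vi) only yields the sandwich $\gamma_{(2,2,1)}(C_n)\le \gamma_{(1,0)}^s(C_n\circ H)\le \gamma_{(2,2,2)}(C_n)$, whose two sides are not equal in general, so the middle quantity must be computed directly. For the upper bound, I would observe that $P_n\circ H$ is a spanning subgraph of $C_n\circ H$ and that $w=(1,0)$ trivially satisfies $w_l/l=0\le\delta(P_n\circ H)$; Proposition \ref{obs-subgraph-general-Secure} then gives $\gamma_{(1,0)}^s(C_n\circ H)\le\gamma_{(1,0)}^s(P_n\circ H)$, and Theorem \ref{teo-equality-gamma(1,0)-Paths}(vi) supplies exactly the claimed expression.

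For the lower bound I would adapt the inductive skeleton of the proof of Theorem \ref{teo-equality-gamma(1,0)-Paths}(vi) to the cyclic setting. Fix a $\gamma_{(1,0)}^s(C_n\circ H)$-function $f(V_0,V_1)$ provided by Lemma \ref{lem-vertice<=2-secure}, and define its projection $f'(X_0,X_1,X_2)$ on $C_n$ by $X_i=\{x\in V(C_n):f(V(H_x))=i\}$, so that $\omega(f)=\omega(f')$. Write $n=11q+r$ with $r\in\{0,\dots,10\}$ and induct on $q$. The base cases are settled by exhibiting explicit cyclic weight patterns and checking minimality by exhaustive inspection, the cyclic analogues of the path patterns $02101210120$, $022101200220$, $0210210220120$ and $0211200220021120$ used in the path proof. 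For the inductive step, given $n=11(q+1)+r$, I would locate $11$ consecutive vertices around the cycle whose $f'$-weight is at least $10$, excise them, identify the two endpoints of this arc into a single edge of $C_{n-11}$, and redistribute any surplus weight onto a vertex near the splice (in analogy with the shift $g(V(H_{x_{13}}))=f(V(H_{x_{13}}))+k_2-10$ of the path proof). This yields a secure $(1,0)$-dominating function on $C_{n-11}\circ H$, so the induction hypothesis gives $\omega(f')-10\ge\gamma_{(1,0)}^s(C_{n-11}\circ H)$, and rearranging produces the required lower bound, with the $+1$ correction appearing precisely when $r\in\{1,2,5\}$.

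The main obstacle is exactly this cyclic splice. On a path the two ends of the excised window are sealed by the ambient path vertices and any surplus shifts cleanly inward; on a cycle, however, the two endpoints of the arc become adjacent after contraction, so one must recheck the $w$-domination condition at the new adjacency and the secure defence condition for every $V_0$-vertex whose former defender lived inside the excised arc. I expect to dispose of this by a short finite case analysis on the values of $f'$ at the four vertices flanking the window (the admissible minimum length-$11$ patterns are very restricted, as the path proof already illustrates) combined with the local weight redistribution described above. Once the splice is justified, the cyclic induction closes and the three modular exceptions $r\in\{1,2,5\}$ emerge naturally, just as in the path case.
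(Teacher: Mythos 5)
Your treatment of (i)--(iii) (citations), of (iv) and (vi) (Theorem \ref{teo-equality-gamma(1,0)} plus the values of $\gamma_{(2,2,1)}(C_n)$ and $\gamma_{(2,2,2)}(C_n)$ from \cite{w-domination}), and your upper bound in (v) ($\gamma_{(1,0)}^s(C_n\circ H)\le\gamma_{(1,0)}^s(P_n\circ H)$ via Proposition \ref{obs-subgraph-general-Secure}, then Theorem \ref{teo-equality-gamma(1,0)-Paths}(vi)) all coincide with the paper. The gap is in your lower bound for (v): the cyclic splice that you defer to ``a short finite case analysis on the values of $f'$ at the four vertices flanking the window'' is precisely the hard content, and it is doubtful it goes through as described. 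In the path proof, both the inequality $k_2=f'(\{x_1,\dots,x_{11}\})\ge 10$ and the clean transfer of the surplus to $x_{13}$ rest on $x_1$ being a leaf, which forces $02101210120$ as the unique minimum pattern on an initial segment; on a cycle there is no distinguished endpoint, so while an averaging argument produces an arc of weight at least $10$, nothing forces that arc to carry a pattern whose excision leaves a secure $(1,0)$-dominating function on $C_{n-11}\circ H$: defenders of $V_0$-vertices flanking the window may lie inside the excised arc on \emph{both} sides, and a weight transfer to a single vertex near one end of the splice cannot repair defence failures at the other end, while the new adjacency created by the contraction must also be checked. You would additionally owe the cyclic base cases (all $n$ up to roughly $21$) their own exhaustive verification, which you assert but do not supply.

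The paper avoids the induction entirely, and you should too. It proves (v) by a dichotomy on the projected function $f'$ obtained from a $\gamma_{(1,0)}^s(C_n\circ H)$-function satisfying Lemma \ref{lem-vertice<=2-secure}: if two consecutive copies both receive weight $0$, then the edge between them can be deleted, giving $\gamma_{(1,0)}^s(C_n\circ H)=\gamma_{(1,0)}^s(P_n\circ H)$ (the reverse inequality is Proposition \ref{obs-subgraph-general-Secure}) and Theorem \ref{teo-equality-gamma(1,0)-Paths}(vi) finishes. Otherwise every $x_i\in X_0$ has both neighbours of positive weight, which (using $\gamma(H)=\gamma_{(1,0)}^s(H)=3$) forces $f'(x_{i-2})+f'(x_{i-1})\ge 3$ and $f'(x_{i+1})+f'(x_{i+2})\ge 3$; covering $V(C_n)$ by the triples $S_i=\{x_i,x_{i+1},x_{i+2}\}$ for $x_i\in X_0$, each of weight at least $3=|S_i|$, together with the remaining vertices, all of positive weight, yields $\omega(f')\ge n$. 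Since $\gamma_{(1,0)}^s(C_n\circ H)\le\gamma_{(1,0)}^s(P_n\circ H)$, this second alternative can occur only for the finitely many $n$ whose claimed value equals $n$, and in those cases it pins the value to exactly $n$; no splice, no base-case search. As written, your proof of (v) is incomplete, and I recommend replacing the inductive step by this dichotomy.
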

 
\begin{proof}

The proofs of (iv) and (vi) are derived by combining  Theorem \ref{teo-equality-gamma(1,0)} with the values of $\gamma_{(2,2,1)}(C_n)$ and $\gamma_{(2,2,2)}(C_n)$ obtained in \cite{w-domination}. It remains to prove (v).

Assume that $\gamma_{(1,0)}^s(H)=\gamma(H)=3$ and let $f(V_0,V_1)$ be a $\gamma_{(1,0)}^s(C_n\circ H)$-function which satisfies Lemma~\ref{lem-vertice<=2-secure}. Let  $f'(X_0,X_1,X_2)$ be the function defined on $C_n$ by $X_1=\{x\in V(C_n): f(V(H_x))=1\}$ and $X_2=\{x\in V(C_n): f(V(H_x))=2\}$. Notice that $\gamma_{(1,0)}^s(C_n\circ H)=\omega(f)=\omega(f')$.

Let $V(C_n)=\{x_0,\dots, x_{n-1}\}$, where consecutive vertices are adjacent and the addition of subscripts is taken modulo $n$.
If there exists $x_i\in V(C_n)$ such that $f(x_i)=f(x_{i+1})=0$, then $\gamma_{(1,0)}^s(C_n\circ H)=\gamma_{(1,0)}^s(P_n\circ H)$ and we derive the result by Theorem \ref{teo-equality-gamma(1,0)-Paths}. From now on, we assume that for any  $x_i\in X_0$ we have that $f'(x_{i-1})>0$ and $f'(x_{i+1})>0$, which implies that $f'(x_{i-2})+f(x_{i-1})\ge 3$ and $f'(x_{i+1})+f(x_{i+2})\ge 3$. Now, for any $x_i\in X_0$ we define $S_i=\{x_{i},x_{i+1},x_{i+2}\}$ and $S=V(C_n)\setminus \left(\cup_{x_i\in X_0}S_i\right)$. Notice that $f(S_i)\ge 3=|S_i|$ for every $x_i\in X_0$ and $f(x_j)>0$ for every $x_j\in S$. 
Hence, by Proposition \ref{obs-subgraph-general-Secure}, $$\gamma_{(1,0)}^s(P_n\circ H)\ge \gamma_{(1,0)}^s(C_n\circ H) =\omega(f')=\sum_{x_i\in X_0}f'(S_i)+f'(S)\ge n.$$
This implies that $n\in \{6,\dots, 10, 12, 13, 15\}$ and $\gamma_{(1,0)}^s(C_n\circ H)=n$. Therefore, the result follows.
\end{proof}

As a direct consequence of Theorems \ref{teo-equality-gamma(1,0)}, \ref{teo-equality-gamma(1,0)-Paths} and  \ref{teo-equality-gamma(1,0)-Cycles}  we derive the following result.

\begin{proposition}\label{Prop(2,1,1)-(1,1,0)s-paths}
The following statements hold for any integer $n\ge 4$.

\begin{itemize}
\item $\gamma_{(2,1,1)}(P_n)=\left\{ \begin{array}{ll}
             \frac{2n}{3}+1 & \text{if } \,  n\equiv 0 \pmod 3,\\[5pt]
             2\lceil\frac{n}{3}\rceil & \mbox{otherwise.}
                                  \end{array}\right.$
        \\
        \\                            
 \item $\gamma_{(2,1,1)}(C_n)=\lceil\frac{2n}{3}\rceil.$
\end{itemize}
\end{proposition}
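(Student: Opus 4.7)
The plan is to deduce both identities by chaining the structural theorems already established. Concretely, I will choose a single auxiliary graph $H$ that simultaneously satisfies the hypotheses of Theorem \ref{teo-equality-gamma(1,0)}(iii), Theorem \ref{teo-equality-gamma(1,0)-Paths}(iii) and Theorem \ref{teo-equality-gamma(1,0)-Cycles}(ii), namely the star $H=K_{1,3}$. This graph is nontrivial, has $\gamma(H)=1$ (the center is universal), and satisfies $\gamma_{(1,0)}^s(H)=3$: a quick case check on a star shows that no secure $(1,0)$-dominating set of size at most $2$ exists, while $\{c,\ell_1,\ell_2\}$ is secure. Hence $H$ meets the condition $\gamma_{(1,0)}^s(H)\ge 3$ and $\gamma(H)=1$ required by case~(iii) of Theorem \ref{teo-equality-gamma(1,0)}, as well as the weaker condition $\gamma_{(1,0)}^s(H)\ge 2$ and $\gamma(H)=1$ of Theorem \ref{teo-equality-gamma(1,0)-Cycles}(ii).

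For the path identity, I will first apply Theorem \ref{teo-equality-gamma(1,0)}(iii) with $G=P_n$ and $H=K_{1,3}$ to obtain
$$\gamma_{(2,1,1)}(P_n)=\gamma_{(1,0)}^s(P_n\circ H).$$
Then I will apply Theorem \ref{teo-equality-gamma(1,0)-Paths}(iii) to the same $H$ to evaluate the right-hand side as $\tfrac{2n}{3}+1$ when $n\equiv 0\pmod 3$ and $2\lceil n/3\rceil$ otherwise. Matching the two expressions yields the claimed formula for $\gamma_{(2,1,1)}(P_n)$.

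The cycle identity is handled symmetrically. Theorem \ref{teo-equality-gamma(1,0)}(iii) applied with $G=C_n$ gives $\gamma_{(2,1,1)}(C_n)=\gamma_{(1,0)}^s(C_n\circ H)$, and Theorem \ref{teo-equality-gamma(1,0)-Cycles}(ii) evaluates this as $\lceil 2n/3\rceil$, which is exactly the claim.

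There is no real obstacle in this argument; the only point that deserves a moment of attention is the existence of a graph meeting the hypotheses $\gamma(H)=1$ and $\gamma_{(1,0)}^s(H)\ge 3$, without which the chain of implications would be vacuous. The star $K_{1,3}$ gives the shortest verification, but any $K_{1,k}$ with $k\ge 3$ would serve the same purpose, so the proof is robust and essentially a corollary as announced in the paragraph preceding the statement.
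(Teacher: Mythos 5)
Your proof is correct and is essentially the paper's own argument: the paper derives this proposition as a direct consequence of Theorems \ref{teo-equality-gamma(1,0)}, \ref{teo-equality-gamma(1,0)-Paths} and \ref{teo-equality-gamma(1,0)-Cycles}, instantiated with any graph $H$ satisfying $\gamma(H)=1$ and $\gamma_{(1,0)}^s(H)\ge 3$, and your explicit choice $H=K_{1,3}$, together with the verification $\gamma_{(1,0)}^s(K_{1,3})=3$, simply makes that instantiation concrete. The only caveat---one the paper shares, since it also invokes these theorems without comment---is that Theorems \ref{teo-equality-gamma(1,0)-Paths} and \ref{teo-equality-gamma(1,0)-Cycles} are stated for $n\ge 6$ while the proposition asserts $n\ge 4$, so strictly speaking the cases $n\in\{4,5\}$ require a direct check (or the remark that the cited formulas from the literature also hold there).
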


\subsection{Weak Roman domination}

This subsection is devoted to study the weak Roman domination of lexicographic product graphs.  To this end, we need the following tools.

\begin{remark}{\rm \cite{Valveny2017}}\label{remark-gamma_r=1-non_K_n}
Given a noncomplete graph $G$, the following statements are equivalent.
\begin{itemize}
\item $\gamma_{(1,0,0)}^s(G)=2.$
\\
\item $\gamma(G)=1$ or $\gamma_{(1,0)}^s(G)=2.$
\end{itemize}
\end{remark}

\begin{lemma}\label{lem-vertice<=2-weak}
For any  graph $G$ with no isolated vertex and any nontrivial graph $H$ with $\gamma(H)=1$, there exists a $\gamma_{(1,0,0)}^s(G\circ H)$-function $f$ such that $f(V(H_u))\leq 2$, for every $u\in V(G)$.
\end{lemma}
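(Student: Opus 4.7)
The plan is to imitate the proof of Lemma~\ref{lem-vertice<=2-secure} line by line, using the hypothesis $\gamma(H)=1$ to fix a universal vertex $v^{*}\in V(H)$. Set $R_{f}=\{u\in V(G):f(V(H_{u}))\ge 3\}$ and take $f$ to be a $\gamma_{(1,0,0)}^{s}(G\circ H)$-function with $|R_{f}|$ minimum. If $R_{f}\neq\varnothing$, pick $u\in R_{f}$ and any $u'\in N(u)$, and define $f'$ by setting $f'(u,v^{*})=2$, $f'(u',v^{*})=t$ where $t:=\min\{2,\,f(V(H_{u'}))+f(V(H_{u}))-2\}$, $f'(u,y)=f'(u',y)=0$ for every $y\neq v^{*}$, and $f'(x,y)=f(x,y)$ for every $x\in V(G)\setminus\{u,u'\}$. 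Because $f(V(H_{u}))\ge 3$ forces $t\ge 1$, a routine weight count gives $\omega(f')\le\omega(f)$; moreover $f'(V(H_{u}))=2$ and $f'(V(H_{u'}))=t\le 2$, so $R_{f'}\subseteq R_{f}\setminus\{u\}$. Thus the proof reduces to showing that $f'$ is a secure $(1,0,0)$-dominating function, producing the desired contradiction.

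The verification exploits the fact that $v^{*}$ is universal in $H$: $(u,v^{*})$ has $f'$-value $2$ and dominates every vertex of $V(H_{u})\cup\bigcup_{z\in N(u)}V(H_{z})$, and analogously $(u',v^{*})$ has $f'$-value $t\ge 1$ and dominates $V(H_{u'})$ together with the copies attached to $u'$. For each $(1,0,0)$-domination check and each defender search, I would split on the position of the $0$-vertex $(x,y)$ of $f'$. If $x\in\{u\}\cup N(u)$, I will use $(u,v^{*})$ as defender. If $x\in\{u'\}\cup (N(u')\setminus(N(u)\cup\{u\}))$, I will use $(u',v^{*})$. Otherwise $x\notin\{u,u'\}\cup N(u)\cup N(u')$, and I will recycle the defender $(x^{*},y^{*})$ of $(x,y)$ given by $f$; in this last case $x^{*}\notin\{u,u'\}$ automatically, because $\{u,u'\}\cap(N(x)\cup\{x\})=\varnothing$, so $f'(x^{*},y^{*})=f(x^{*},y^{*})\ge 1$.

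The main hurdle is verifying that each of these shifted functions $f'_{(\cdot)\to(x,y)}$ is still $(1,0,0)$-dominating. The key estimate is that for any $0$-vertex $(x'',y'')$ of the shift, the universal-vertex coverage gives $f'(N(x'',y''))\ge 2$ whenever $x''\in N(u)$, since $V(H_{u})\subseteq N(x'',y'')$ and $f'(V(H_{u}))=2$; this absorbs the unit of weight lost in the shift. An analogous bound using $V(H_{u'})$ handles neighbors of $u'$, with the delicate subcase $t=1$ being resolved by noting that then $f(V(H_{u}))=3$ and $f(V(H_{u'}))=0$, so adding the new weight at $(u',v^{*})$ strictly increases $f(N(x'',y''))\ge 1$ and yields $f'(N(x'',y''))\ge 2$. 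For $0$-vertices $(x'',y'')$ outside $\{u,u'\}\cup N(u)\cup N(u')$, the support of $f'-f$ does not meet $N(x'',y'')$, so the inequalities inherited from $f$ and from $f_{(x^{*},y^{*})\to(x,y)}$ pass directly to $f'$ and $f'_{(x^{*},y^{*})\to(x,y)}$, closing the argument.
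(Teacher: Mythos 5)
Your proposal is correct and follows essentially the same route as the paper: choose a $\gamma_{(1,0,0)}^s(G\circ H)$-function minimizing $|R_f|$, concentrate the weight of $H_u$ as a $2$ at the universal vertex $(u,v^{*})$, transfer the surplus (capped at $2$) to the universal vertex of a neighbouring copy $H_{u'}$, and derive a contradiction with the minimality of $|R_f|$. The only differences are cosmetic — you use the cap $\min\{2,\,f(V(H_{u'}))+f(V(H_u))-2\}$ where the paper uses $\min\{2,\,f(V(H_{u'}))+1\}$ (both valid, both $\le 2$, both keeping $\omega(f')\le\omega(f)$), and you spell out the security verification, including the genuinely delicate subcase $t=1$ (where $f(V(H_{u'}))=0$ forces $f'(N(x'',y''))\ge 2$ for $0$-vertices adjacent only to $H_{u'}$), which the paper compresses into a single ``notice that''.
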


\begin{proof}
Given a secure $(1,0,0)$-dominating function $f$ on $G\circ H$, we define $R_f=\{x\in V(G): \, f(V(H_x))\ge 3\}$. Let $f(V_0,V_1,V_2)$ be a $\gamma_{(1,0,0)}^s(G\circ H)$-function such that $|R_f|$ is minimum among all $\gamma_{(1,0,0)}^s(G\circ H)$-functions.

Suppose that $|R_f|\geq 1$.  Let $u\in R_f$, $u'\in N(u)$ and $v$ be a universal vertex of $H$. Notice that the function $f'$ on $G\circ H$, defined by $f'(u,v)=f'(V(H_{u}))=2$, $f'(V(H_{u'}))=\min\{2,f(V(H_{u'}))+1\}$, and $f'(x,y)=f(x,y)$ for every $x\in V(G)\setminus \{u,u'\}$ and $y\in V(H)$, is a secure $(1,0,0)$-dominating function on $G\circ H$ with $\omega(f')\le \omega(f)$ and $|R_{f'}|<|R_f|$, which is a contradiction. 
Therefore, $R_f=\varnothing $, and so the result follows. 
\end{proof}

Our goal is to show how the weak Roman domination number of $G\circ H$ is related to   $\gamma_{w}^s(G)$ or $\gamma_{w}(G)$ for certain vectors $w$ of three components. By  Theorems  \ref{teo-s=r-lexicographic} and \ref{teo-equality-gamma(1,0)}, 
 the outstanding case is when  $\gamma_{(1,0)}^s(H)\ge 3$ and $\gamma_{(1,0,0)}^s(H)=2$, which is equivalent to $\gamma(H)=1$ and $\gamma_{(1,0)}^s(H)\ge 3$, by Remark \ref{remark-gamma_r=1-non_K_n}. 
In fact, we will present the result with the only assumption on $H$ of being a noncomplete graph with $\gamma(H)=1$. 

\begin{theorem}\label{Th-WRD-Lex-gammaH=1}
For any graph $G$ with no isolated vertex and any noncomplete graph $H$ with $\gamma(H)=1$, 
 $$\gamma_{(1,0,0)}^s(G\circ H)=\gamma_{(2,1,0)}(G).$$
\end{theorem}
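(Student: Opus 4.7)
I would split the proof into the two inequalities and handle each by the now-standard transfer between $G\circ H$ and $G$.

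For the upper bound $\gamma_{(1,0,0)}^s(G\circ H)\le \gamma_{(2,1,0)}(G)$, my plan is to start with a $\gamma_{(2,1,0)}(G)$-function $g(W_0,W_1,W_2)$ and a universal vertex $v$ of $H$ (which exists since $\gamma(H)=1$), and to define $g'(W_0',W_1',W_2')$ on $G\circ H$ by $W_i'=W_i\times\{v\}$ for $i\in\{1,2\}$, so that $\omega(g')=\omega(g)$. That $g'$ is a $(1,0,0)$-dominating function is immediate from the two cases ``$x\in W_0$, in which case $g'(N(x,y))\ge g(N_G(x))\ge 2$'' and ``$x\in W_1\cup W_2$ with $y\ne v$, in which case $(x,v)\in N(x,y)$ via universality and carries positive weight''. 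For the secure property at $(x,y)\in W_0'$, I would take $(x,v)$ as defender when $x\in W_1\cup W_2$, and take a defender $(x',v)$ with $x'\in N_G(x)$ maximising $g(x')$ when $x\in W_0$. The verification in each case rests on the uniform estimate $g'(N(x'',y''))\ge 2$ for every zero-vertex $(x'',y'')$, which I would check by splitting on $x''\in W_0$, $x''\in W_1$ with $y''\ne v$, and $x''\in W_2$ with $y''\ne v$; since at most a net $-1$ can be lost from such a neighbourhood sum after a swap, $(1,0,0)$-domination is preserved.

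For the lower bound $\gamma_{(2,1,0)}(G)\le \gamma_{(1,0,0)}^s(G\circ H)$, I would take a $\gamma_{(1,0,0)}^s(G\circ H)$-function $f$ satisfying Lemma~\ref{lem-vertice<=2-weak} and define $f'(X_0,X_1,X_2)$ on $G$ by $X_i=\{x\in V(G):\,f(V(H_x))=i\}$, so that $\omega(f')=\omega(f)$. Then I would verify that $f'$ is a $(2,1,0)$-dominating function. For $x\in X_0$, I would use the noncompleteness of $H$ to pick non-adjacent $y_1,y_2\in V(H)$: the defender for $(x,y_1)$ must be external (some $(x',y')$ with $x'\in N_G(x)$, since $V(H_x)$ has no positive weight), and applying the dominating property of $f_{(x',y')\to(x,y_1)}$ at $(x,y_2)$ forces $f'(N_G(x))-1\ge 1$, i.e.\ $f'(N_G(x))\ge 2$.

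The delicate step is $x\in X_1$. Letting $(x,v_1)$ be the unique weighted vertex of $H_x$, I would split into two sub-cases. If $v_1$ has a non-neighbour $y_2\in V(H)\setminus\{v_1\}$, then $(x,y_2)\in V_0$ has no positive $f$-value in $\{x\}\times N_H(y_2)$, so plain $(1,0,0)$-domination already gives $f'(N_G(x))\ge 1$. Otherwise $v_1$ is universal in $H$; since $H$ is noncomplete, there is a non-universal vertex $y^*\in V(H)\setminus\{v_1\}$, and applying the secure property at $(x,y^*)\in V_0$ yields a defender. If the defender lies in $N_G(x)\times V(H)$ we get $f'(N_G(x))\ge 1$ directly; if instead the defender is $(x,v_1)$, I would choose a non-neighbour $y'$ of $y^*$ (distinct from $v_1$, because $v_1$ is universal), observe that after the swap $(x,y')$ is a zero-vertex with $(x,y^*)\notin N(x,y')$, and conclude once again that $f'(N_G(x))\ge 1$.

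The main obstacle is precisely the sub-case where the $X_1$-weight sits on a universal vertex of $H$, because $(1,0,0)$-domination alone then imposes no constraint on $f'(N_G(x))$; the argument must invoke the secure property at a cleverly chosen non-universal vertex of $H$ and then exploit a non-neighbour of it, using the noncompleteness of $H$ in two separate places. Once this sub-case is dispatched, the lower bound closes, and combining it with the upper bound completes the theorem.
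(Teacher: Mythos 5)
Your proposal is correct and takes essentially the same route as the paper's own proof: the lower bound projects a $\gamma_{(1,0,0)}^s(G\circ H)$-function normalized by Lemma~\ref{lem-vertice<=2-weak} onto $G$ via $f'(X_0,X_1,X_2)$, and the upper bound lifts a $\gamma_{(2,1,0)}(G)$-function to the universal vertex $v$, exactly as the paper does. The only difference is expository: where the paper justifies $f'(N(x))\ge 2$ for $x\in X_0$ and $f'(N(x))\ge 1$ for $x\in X_1$ by citing $\gamma_{(1,0,0)}^s(H)=2$ (Remark~\ref{remark-gamma_r=1-non_K_n}), you verify these inequalities directly from the secure condition --- including the subcase where the weight on $H_x$ sits on a universal vertex --- which is a sound unpacking of the same argument.
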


\begin{proof}
Let $H$ be a noncomplete graph with $\gamma(H)=1$ and  $f(V_0,V_1,V_2)$ a $\gamma_{(1,0,0)}^s(G\circ H)$-function which satisfies Lemma \ref{lem-vertice<=2-weak}.  Let  $f'(X_0,X_1,X_2)$ be the function defined on $G$ by $X_1=\{x\in V(G): f(V(H_x))=1\}$ and $X_2=\{x\in V(G): f(V(H_x))=2\}$.
Since $\gamma_{(1,0,0)}^s(H)=2$, if $x\in X_0$, then  $f'(N(x))=f(N(V(H_x))\setminus V(H_x))\ge 2$ and if $x\in X_1$, then  $f'(N(x))=f(N(V(H_x))\setminus V(H_x))\ge 1$. Hence,  $f'$ is a  $(2,1,0)$-dominating function on $G$, which implies that $\gamma_{(1,0,0)}^s(G\circ H)=\omega(f)=\omega(f')\ge \gamma_{(2,1,0)}(G)$.

Now, we show that $\gamma_{(1,0,0)}^s(G\circ H)\le\gamma_{(2,1,0)}(G)$. Let $v$ be a universal vertex of $H$. For any $\gamma_{(2,1,0)}(G)$-function $g(W_0,W_1,W_2)$, the function $g'(W_0',W_1',W_2')$, defined by $W_1'=W_1\times\{v\}$ and $W_2'=W_2\times\{v\}$, is a $(1,0,0)$-dominating function on $G\circ H$. Now, for any $(x,y)\in W_0'\setminus W_0\times V(H)$, we can see that  $g'_{(x,v)\rightarrow (x,y)}$ is a $(1,0,0)$-dominating function on $G\circ H$. Furthermore, for every $x\in W_0$ there exists $x'\in W_2\cap N(x)$ or two vertices $x',x''\in W_1\cap N(x)$, and so  $g'_{(x',v)\rightarrow (x,y)}$ is a $(1,0,0)$-dominating function on $G\circ H$ for every $(x,y)\in W_0\times V(H)$. Therefore, $g'$ is a secure $(1,0,0)$-dominating function on $G\circ H$, and as a consequence, $\gamma_{(1,0,0)}^s(G\circ H)\le\omega(g')=\omega(g)=\gamma_{(2,1,0)}(G)$. 
\end{proof}

We next present some particular cases of Theorem \ref{Th-WRD-Lex-gammaH=1}.

\begin{corollary}
Let  $n$ and $r$ be two  integers  such that $n\geq r\geq 2$. If $H$ is a noncomplete graph with 
$\gamma(H)=1$, then the following statements hold.
\begin{itemize}
\item $\gamma_{(1,0,0)}^s(K_n\circ H)=2$.
         \\                           
\item $\gamma_{(1,0,0)}^s(K_{1,n-1}\circ H)=2$.
\\
\item $\gamma_{(1,0,0)}^s(K_{n,r}\circ H)=\left\{ \begin{array}{ll}
             3 & \text{if }r=2,\\[5pt]
             4 & \mbox{otherwise.}
                                  \end{array}\right.$
\end{itemize}
\end{corollary}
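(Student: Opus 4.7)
The plan is to invoke Theorem~\ref{Th-WRD-Lex-gammaH=1}, which under the hypotheses on $H$ gives $\gamma_{(1,0,0)}^s(G\circ H)=\gamma_{(2,1,0)}(G)$. Since each of $K_n$, $K_{1,n-1}$ and $K_{n,r}$ (with $r\geq 2$) has no isolated vertex, the corollary will follow once I establish the values $\gamma_{(2,1,0)}(K_n)=\gamma_{(2,1,0)}(K_{1,n-1})=2$, $\gamma_{(2,1,0)}(K_{n,2})=3$, and $\gamma_{(2,1,0)}(K_{n,r})=4$ for $r\geq 3$. So the whole proof reduces to computing $\gamma_{(2,1,0)}$ on four elementary graphs.

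For $K_n$ and $K_{1,n-1}$, placing weight $2$ on any vertex (the center in the star case) and $0$ elsewhere yields a $(2,1,0)$-dominating function of weight $2$: every other vertex lies in $V_0$ and sees the weight-$2$ vertex as a neighbour, and $w_2=0$ so nothing is required of the weighted vertex. The matching lower bound is immediate, since any function of weight at most $1$ leaves some $v\in V_0$ with $f(N(v))\leq 1<2=w_0$. For $K_{n,r}$ with parts $A,B$ of sizes $n,r$ and $r=2$, writing $B=\{b_1,b_2\}$ and picking $a\in A$, the function with $f(a)=f(b_1)=f(b_2)=1$ has weight $3$ and is $(2,1,0)$-dominating: each vertex of $B$ gets $f(N)=1\geq w_1$, each other vertex of $A$ gets $f(N)=2\geq w_0$. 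For $r\geq 3$, assigning weight $2$ to one vertex of each part yields a $(2,1,0)$-dominating function of weight $4$, since every remaining vertex lies in $V_0$ and has a weight-$2$ neighbour in the opposite part.

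The main task is then the lower bounds for $K_{n,r}$, which I would handle by a short case analysis on how the weight is distributed between the two parts and between the values $1$ and $2$. For $r=2$, every function of weight $2$ fails to satisfy $f(N(v))\geq 2$ either at the remaining vertex of $B$ (when all the weight lies in $B$ or on the side of $A$) or at the vertices of $A$ (when all the weight lies in $B$ but is split). For $r\geq 3$, any function of weight $3$ concentrates insufficient mass in at least one part: either some $v\in A\cap V_0$ sees $f(N(v))=f(B)\leq 2$ with $w_0=2$ failing when $f(B)\leq 1$, or some $v\in B\cap V_0$ (which exists because $|B|\geq 3$) sees $f(N(v))=f(A)<2$. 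The verification is routine; this case enumeration is the only, and minor, obstacle, and with it in hand the three bullets follow directly by combining the computed values with Theorem~\ref{Th-WRD-Lex-gammaH=1}.
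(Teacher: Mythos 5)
Your overall strategy is exactly the paper's: the paper's entire proof consists of invoking Theorem~\ref{Th-WRD-Lex-gammaH=1} and asserting the values $\gamma_{(2,1,0)}(K_n)=\gamma_{(2,1,0)}(K_{1,n-1})=2$, $\gamma_{(2,1,0)}(K_{n,2})=3$ and $\gamma_{(2,1,0)}(K_{n,r})=4$ for $r\ge 3$, without verification. Your upper-bound constructions and the lower bounds for $K_n$ and $K_{1,n-1}$ are correct. However, your lower-bound case analyses for $K_{n,r}$ contain a genuine flaw: you track only the $w_0$-condition $f(N(v))\ge 2$ and never use the $w_1$-condition $f(N(v))\ge 1$ for vertices of weight $1$, and some weight distributions are excluded \emph{only} by the latter. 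Concretely, for $r=2$, if the weight $2$ is split as $f(b_1)=f(b_2)=1$ inside $B$, then every vertex of $A$ lies in $V_0$ and is satisfied, since $f(N(a))=f(B)=2\ge 2$; the function fails because $b_1,b_2\in V_1$ see $f(A)=0<1=w_1$, so your attribution of the failure to ``the vertices of $A$'' is wrong (similarly, weight concentrated in $A$ fails inside $A$, not at the remaining vertex of $B$, since then $f(N(b_i))=f(A)=2$).

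More seriously, for $r\ge 3$ your dichotomy is false in a specific subcase: take $n=r=3$ and spread weight $3$ as $1+1+1$ over all of $A$ (or all of $B$). Then $A\cap V_0=\varnothing$, so your first branch cannot fire, and every $v\in B\cap V_0$ sees $f(N(v))=f(A)=3\ge 2$, so your second branch does not fire either --- in particular the parenthetical claim that some $v\in B\cap V_0$ exists ``because $|B|\ge 3$'' is wrong when $f(B)=3$ is spread over a part of size exactly $3$. This function is ruled out only because each weight-$1$ vertex $a$ violates $f(N(a))=f(B)=0\ge 1=w_1$. The repair is easy: add to your enumeration the observation that any vertex of weight $1$ in $K_{n,r}$ requires positive weight on the opposite side, which kills the all-ones-in-one-part distributions (and the $1+1$-in-$B$ distribution when $r=2$); with that, all four values of $\gamma_{(2,1,0)}$ are as claimed and the corollary follows exactly as in the paper.
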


\begin{proof}
 Theorem \ref{Th-WRD-Lex-gammaH=1} leads to  the result, as $\gamma_{(2,1,0)}(K_n)=\gamma_{(2,1,0)}(K_{1,n-1})=2$, $\gamma_{(2,1,0)}(K_{n,2})=3$ and  $\gamma_{(2,1,0)}(K_{n,r})=4$ whenever $r\ge 3$.
\end{proof}

To conclude this section, we would present the following result which shows that the study of the cases $G\cong P_n$ and $G\cong C_n$ is complete. 

\begin{theorem}{\rm \cite{SecureLexicographicDMGT}}
For any integer $n\ge 3$ and any noncomplete graph with $\gamma(H)=1$,
 $$\gamma_{(1,0,0)}^s(P_n\circ H)= 2 \left\lceil\frac{n}{3}\right\rceil  \quad \text{and} \quad  \gamma_{(1,0,0)}^s(C_n\circ H)=\left\lceil\frac{ 2n}{3}\right\rceil .$$ 
\end{theorem}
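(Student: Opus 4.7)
The plan is to leverage Theorem~\ref{Th-WRD-Lex-gammaH=1}, which asserts that $\gamma_{(1,0,0)}^s(G\circ H)=\gamma_{(2,1,0)}(G)$ whenever $H$ is a noncomplete graph with $\gamma(H)=1$. Applied with $G=P_n$ and $G=C_n$, this reduces the theorem to evaluating the two $w$-domination numbers $\gamma_{(2,1,0)}(P_n)$ and $\gamma_{(2,1,0)}(C_n)$.

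To determine these values, I would exploit the fact that they are already implicit in the paper, albeit inside a different product identity. Concretely, fix any noncomplete graph $H_0$ with $\gamma(H_0)=1$ and $\gamma_{(1,0)}^s(H_0)=2$; the star $H_0=K_{1,2}$ works, since its central vertex is universal but any single vertex fails the secure condition. Then Theorem~\ref{teo-equality-gamma(1,0)}(ii) gives
$$\gamma_{(1,0)}^s(G\circ H_0)=\gamma_{(2,1,0)}(G)$$
for every $G$ with no isolated vertex, while Theorem~\ref{teo-equality-gamma(1,0)-Paths}(ii) and Theorem~\ref{teo-equality-gamma(1,0)-Cycles}(ii) (the latter being applicable because $\gamma_{(1,0)}^s(H_0)\ge 2$) supply the explicit closed forms
$$\gamma_{(1,0)}^s(P_n\circ H_0)=2\left\lceil\tfrac{n}{3}\right\rceil, \qquad \gamma_{(1,0)}^s(C_n\circ H_0)=\left\lceil\tfrac{2n}{3}\right\rceil,$$
valid for $n\ge 6$. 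Combining the two identities yields $\gamma_{(2,1,0)}(P_n)=2\lceil n/3\rceil$ and $\gamma_{(2,1,0)}(C_n)=\lceil 2n/3\rceil$, and feeding this back into Theorem~\ref{Th-WRD-Lex-gammaH=1} produces the claimed formulas.

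For the small cases $n\in\{3,4,5\}$, which fall outside the range of Theorems~\ref{teo-equality-gamma(1,0)-Paths} and \ref{teo-equality-gamma(1,0)-Cycles}, I would argue by hand. The upper bound comes from the standard construction that places weight $2$ on every third vertex of $P_n$ or $C_n$, which one checks is a $(2,1,0)$-dominating function on $G$; lifting this to $G\circ H$ using a universal vertex of $H$ in the second coordinate (as in the upper-bound half of the proof of Theorem~\ref{Th-WRD-Lex-gammaH=1}) gives a secure $(1,0,0)$-dominating function of the required weight. The matching lower bound is a short case analysis that uses only the constraint that each vertex of weight $0$ in a $(2,1,0)$-dominating function must see total weight at least $2$ among its at most two neighbours.

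The main obstacle is essentially bookkeeping: verifying that the chosen $H_0$ simultaneously satisfies the hypotheses of Theorem~\ref{teo-equality-gamma(1,0)}(ii) and of Theorem~\ref{teo-equality-gamma(1,0)-Cycles}(ii), and checking the three small cases $n\in\{3,4,5\}$ carefully. No new structural idea is required beyond the reductions already developed in this section.
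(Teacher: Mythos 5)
Your proposal is essentially correct, but it is worth noting that it cannot coincide with ``the paper's own proof'' because the paper gives none: this theorem is imported verbatim from \cite{SecureLexicographicDMGT} to close the subsection. What you have produced is a re-derivation of the cited result from the machinery of this paper, and the core of it works: Theorem \ref{Th-WRD-Lex-gammaH=1} applies to $G=P_n$ and $G=C_n$ for all $n\ge 3$ (no isolated vertices), reducing everything to the values of $\gamma_{(2,1,0)}(P_n)$ and $\gamma_{(2,1,0)}(C_n)$; and your witness $H_0=K_{1,2}$ does satisfy $\gamma(H_0)=1$ and $\gamma_{(1,0)}^s(H_0)=2$, so combining Theorem \ref{teo-equality-gamma(1,0)}(ii) with Theorems \ref{teo-equality-gamma(1,0)-Paths}(ii) and \ref{teo-equality-gamma(1,0)-Cycles}(ii) correctly gives $\gamma_{(2,1,0)}(P_n)=2\left\lceil n/3\right\rceil$ and $\gamma_{(2,1,0)}(C_n)=\left\lceil 2n/3\right\rceil$ for $n\ge 6$. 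However, the detour through $H_0$ is both unnecessary and aesthetically awkward, since the two theorems you invoke come from the very same source \cite{SecureLexicographicDMGT} as the statement being proved. The shorter and cleaner route, which the paper itself uses elsewhere, is to quote the values $\gamma_{(2,1,0)}(P_n)=2\left\lceil n/3\right\rceil$ and $\gamma_{(2,1,0)}(C_n)=\left\lceil 2n/3\right\rceil$ directly from \cite{w-domination}, exactly as is done in the proofs of Theorems \ref{SecureTotalLexicographic-Paths}(i) and \ref{SecureTotalLexicographic-Cycles}(i); then only $n=3$ (trivial) needs a hand check.

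One concrete slip in your small-case handling: the construction ``weight $2$ on every third vertex'' does not achieve the claimed optimum on $C_4$, where it costs $4$ while $\gamma_{(2,1,0)}(C_4)=\left\lceil 8/3\right\rceil=3$. There you need a mixed pattern, e.g.\ three consecutive vertices of weight $1$: the unique $0$-vertex then sees total weight $2$ in its neighbourhood, and each $1$-vertex has a positive neighbour. More generally, for cycles with $n\not\equiv 0\pmod 3$ the optimal $(2,1,0)$-dominating functions are built from the repeating block $1,1,0$ rather than $2,0,0$ (your construction is fine for $P_3$, $P_4$, $P_5$, $C_3$ and $C_5$, but not for $C_4$). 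Since you explicitly deferred the small cases to a careful check, this is repairable within your plan, but as written the upper bound for $C_4$ would be left unproved.
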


\subsection{Secure total domination and total weak Roman domination}
It was shown in \cite{TPlexicographic-2019} that for any graph $G$ with no isolated vertex and any nontrivial graph $H$ the secure total domination number of $G\circ H$ equals the total weak Roman domination number. 

\begin{theorem}\label{teo-st=tr-lexicographic}{\rm \cite{TPlexicographic-2019}} 
For any graph $G$ with no isolated vertex and any nontrivial graph $H$,
$$\gamma_{(1,1,1)}^s(G\circ H)=\gamma_{(1,1)}^s(G\circ H).$$
\end{theorem}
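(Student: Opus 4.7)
The plan is to prove the equality via two opposite inequalities. The direction $\gamma_{(1,1,1)}^s(G\circ H)\le \gamma_{(1,1)}^s(G\circ H)$ is immediate: any secure $(1,1)$-dominating function takes values in $\{0,1\}\subseteq\{0,1,2\}$, and under this restriction the defining conditions of secure $(1,1)$-domination and secure $(1,1,1)$-domination coincide, since the move $f_{u\to v}$ preserves $\{0,1\}$-valuedness. Hence every optimal secure $(1,1)$-dominating function is automatically a secure $(1,1,1)$-dominating function of the same weight.

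For the reverse inequality the plan is to establish the structural claim that there exists a $\gamma_{(1,1,1)}^s(G\circ H)$-function $f$ with $f(z)\in\{0,1\}$ for every $z\in V(G\circ H)$; such an $f$ is automatically a secure $(1,1)$-dominating function, yielding $\gamma_{(1,1)}^s(G\circ H)\le \omega(f)=\gamma_{(1,1,1)}^s(G\circ H)$. I would select, among all $\gamma_{(1,1,1)}^s(G\circ H)$-functions, one $f$ minimizing $|\{z:f(z)=2\}|$, and show that this set is empty. If a vertex $(x,y_0)$ has weight $2$, I consider two cases: \emph{(A)} some vertex $(x,y_1)$ of $H_x$ has $f(x,y_1)=0$, in which case I redistribute by setting $f'(x,y_0)=f'(x,y_1)=1$ and $f'=f$ elsewhere, obtaining another optimal function with strictly fewer weight-$2$ vertices; \emph{(B)} every vertex of $H_x$ has positive $f$-value, in which case setting $f''(x,y_0)=1$ and $f''=f$ elsewhere strictly decreases the weight while still producing a secure $(1,1,1)$-dominating function. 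Both alternatives contradict the choice of $f$, so the set of weight-$2$ vertices must be empty.

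The heart of the argument is verifying that $f'$ (resp.\ $f''$) remains a secure $(1,1,1)$-dominating function. For the $(1,1,1)$-domination condition, neighborhoods of vertices outside $H_x$ reached through an edge of $G$ see the same aggregate weight $f(V(H_x))$, while neighborhoods inside $H_x$ that lose a unit of weight originally contained $(x,y_0)$ with weight $2$, preserving the threshold. For the security condition, every $v$ lying in the zero-set of the new function inherits a witness $u$ from $f$ whenever $u\neq(x,y_0)$; when the original witness must be $(x,y_0)$, one substitutes $(x,y_1)$ in Case A (a valid neighbour of $v$ automatically when $v$ lies outside $H_x$) or any other positively weighted vertex of $H_x$ in Case B.

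I expect the main obstacle to be the security verification in Case A when $v$ lies inside $H_x$ and the candidate substitute $(x,y_1)$ is not adjacent to $v$ in $H$. Handling this requires a careful preliminary choice of $y_1$, combined with the slack provided by the doubled original contribution of $(x,y_0)$ to the involved neighborhoods; when even this is not enough, one argues that a further refinement of $f$ (for instance, minimizing also the number of zero-vertices inside each $H_x$, or exchanging the role of $y_0$) rules out the offending configuration, ultimately forcing the set of weight-$2$ vertices to be empty on the optimal $f$.
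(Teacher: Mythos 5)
Your easy direction is fine: a secure $(1,1)$-dominating function is $\{0,1\}$-valued, the moves $f_{u\rightarrow v}$ preserve $\{0,1\}$-valuedness, and on $\{0,1\}$-valued functions the $(1,1)$- and $(1,1,1)$-conditions coincide, so $\gamma_{(1,1,1)}^s(G\circ H)\le\gamma_{(1,1)}^s(G\circ H)$. Your Case B also goes through (all zero vertices then lie outside $H_x$, hence are adjacent to every vertex of $H_x$, and the slack argument applies because any neighbourhood sum containing $(x,y_0)$ under $f$, or under $f_{u\rightarrow v}$ with $u\neq(x,y_0)$, is at least $2$). But in Case A there is a genuine gap, and it sits exactly where you flagged it. The slack argument works only while $(x,y_0)$ still carries weight $2$; once you invoke $(x,y_0)$ itself as a witness, the function $f'_{(x,y_0)\rightarrow v}$ differs from the $(1,1,1)$-dominating function $f_{(x,y_0)\rightarrow v}$ by moving one unit from $(x,y_0)$ to $(x,y_1)$, and there is no slack left. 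Concretely, if $v=(x,y_v)$ with $y_v\sim y_0$, $y_v\not\sim y_1$, and $f(N(x,y_v))=2$ with both units coming from $(x,y_0)$ (all other in-copy neighbours of $y_v$ and all copies $H_{x''}$ with $x''\sim x$ carrying weight zero), then $f'_{(x,y_0)\rightarrow v}(N(x,y_v))=0$; similar failures occur at third vertices $(x,y)$ with $y\sim y_0$, $y\not\sim y_1$ and $f(N(x,y))=2$. In this configuration $(x,y_1)$ is not adjacent to $v$, there are no positively weighted external candidates, and your proposal offers only the assertion that "a further refinement of $f$ rules out the offending configuration" — with no argument. Since deriving a contradiction from the minimality of the number of weight-$2$ vertices requires the exchange (or some exchange) to succeed in every configuration, this unresolved case leaves the reverse inequality, which is the entire content of the theorem, unproven.

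For context: the paper you are working from does not prove this statement at all — it quotes it from \cite{TPlexicographic-2019} — and the proof there is devoted precisely to the in-copy security cases you deferred, handled by a careful case analysis exploiting that all vertices of a copy $H_x$ share the same external neighbourhood (compare Lemma \ref{lem-st-lexicographic}, which implements the same weight-redistribution idea). So your architecture (two inequalities, reduction to a $\{0,1\}$-valued optimal function by local exchanges within copies) is the right one and matches the known approach in spirit, but as written the proof is incomplete: you must either show the offending configuration is incompatible with optimality plus your minimality criteria, or design an alternative exchange (e.g., shifting weight into an adjacent copy) and verify security for it.
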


According to Theorem \ref{teo-st=tr-lexicographic}, we can restrict ourselves to study the 
secure total domination number of $G\circ H$. To this end,  we shall need the following lemma. 

\begin{lemma}\label{lem-st-lexicographic}{\rm \cite{TPlexicographic-2019}} 
For any graph $G$ with no isolated vertex and any nontrivial graph $H$, there exists a $\gamma_{(1,1)}^s(G\circ H)$-function $f$ satisfying that $f(V(H_u))\leq 2$  for every $u\in V(G)$.
\end{lemma}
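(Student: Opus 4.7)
The plan is to mimic the proof of Lemma~\ref{lem-vertice<=2-secure}, adapting the construction to account for the total condition present in secure $(1,1)$-domination. Given a secure $(1,1)$-dominating function $f$ on $G\circ H$, define $R_f = \{x\in V(G) : f(V(H_x))\ge 3\}$, and among all $\gamma_{(1,1)}^s(G\circ H)$-functions select one with $|R_f|$ minimum. Arguing by contradiction, assume $|R_f|\ge 1$. Pick $u\in R_f$ and, since $G$ has no isolated vertices, some $u'\in N_G(u)$. Since $f$ takes values in $\{0,1\}$ and $f(V(H_u))\ge 3$, there exist two distinct $v_1,v_2\in V(H)$ with $f(u,v_1)=f(u,v_2)=1$.

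Next, define $f'$ following the template of Lemma~\ref{lem-vertice<=2-secure}: set $f'(u,v_1)=f'(u,v_2)=1$ and $f'(u,y)=0$ for every $y\in V(H)\setminus\{v_1,v_2\}$; set $f'(V(H_{u'}))=\min\{2,\,f(V(H_{u'}))+f(V(H_u))-2\}$ by placing $1$'s on an appropriate set of vertices of $H_{u'}$; and keep $f'(x,y)=f(x,y)$ for every $x\in V(G)\setminus\{u,u'\}$. A direct computation then shows $\omega(f')\le\omega(f)$ and $|R_{f'}|<|R_f|$, since $f'(V(H_u))=2$ and $f'(V(H_{u'}))\le 2$ while all other copies are untouched.

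The crux is to verify that $f'$ is still a secure $(1,1)$-dominating function, which would contradict the minimality of $|R_f|$. The key observations to exploit are: (a) since $u'\in N_G(u)$, every vertex of $V(H_u)$ is adjacent in $G\circ H$ to every vertex of $V(H_{u'})$; (b) since $f(V(H_u))\ge 3$, one has $f'(V(H_{u'}))\ge 1$, so the vertices of $V(H_u)$ (in particular $(u,v_1)$ and $(u,v_2)$) still have a positive neighbor, meeting the total requirement; (c) whenever $x\in N_G(u)$, we have $V(H_u)\subseteq N(x,y)$ for every $y$, so $f(N(x,y))\ge f(V(H_u))$, which absorbs the decrease $f(V(H_u))-2$ and yields $f'(N(x,y))\ge 2$; an analogous accounting handles $x\in N_G(u')$ using $V(H_{u'})\subseteq N(x,y)$, while for $x\notin N_G[u]\cup N_G[u']$ nothing in the neighbourhood of $(x,y)$ has changed.

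The main obstacle is the security check. For any $(x,y)$ with $f'(x,y)=0$, one must exhibit a defender $(a,b)\in N(x,y)$ such that $f'_{(a,b)\to(x,y)}$ is still $(1,1)$-dominating. For vertices in $V(H_u)\setminus\{(u,v_1),(u,v_2)\}$, the defender $(u,v_1)$ (or, if unavailable because $v_1$ is not adjacent to $y$ in $H$, a vertex of $V(H_{u'})$ with $f'=1$) works, using again that $f'(V(H_{u'}))\ge 1$ and that $V(H_u)$ still carries weight $2$. For vertices outside $V(H_u)$, one reuses the defender that $f$ already provided: if that defender lies in $V(G)\setminus\{u,u'\}$, the move is unchanged under $f'$; if it lies in $V(H_u)$, the slack from $f(V(H_u))\ge 3$ (one positive vertex survives the move) together with the extra mass transferred to $H_{u'}$ suffices to re-establish both domination and totality. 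Piecing these cases together, $f'$ is a secure $(1,1)$-dominating function with $|R_{f'}|<|R_f|$, contradicting the choice of $f$ and hence forcing $R_f=\varnothing$.
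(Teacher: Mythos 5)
First, note that the paper does not prove this lemma at all: it is quoted from \cite{TPlexicographic-2019}, so you are attempting something the present paper delegates to the literature. Your plan of transplanting the proof of Lemma~\ref{lem-vertice<=2-secure} is natural, and your weight count and the claim $|R_{f'}|<|R_f|$ are fine, as is the plain $(1,1)$-domination check (every vertex of $H_u$ sees $V(H_{u'})$, which carries $f'$-weight at least $1$ since $f(V(H_u))\ge 3$). The genuine gap is in the security check, precisely at your parenthetical fallback: for a vertex $(u,y)$ with $f'(u,y)=0$ and $y\notin N_H(v_1)\cup N_H(v_2)$, you propose the defender ``a vertex of $V(H_{u'})$ with $f'=1$.'' This fails in the extremal case $f(V(H_u))=3$, $f(V(H_{u'}))=0$, where your formula gives $f'(V(H_{u'}))=\min\{2,0+3-2\}=1$: if moreover no copy $H_t$ with $t\in N_G(u)\setminus\{u'\}$ carries positive weight (e.g.\ $\deg_G(u)=1$), then after the move $f'_{(u',b)\rightarrow (u,y)}$ the copy $H_{u'}$ is emptied, and $(u,y)$ itself has no positive neighbour at all: its in-copy neighbours $\{u\}\times N_H(y)$ miss $v_1,v_2$, and every adjacent copy has weight zero. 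So the moved function violates the total condition $f(N(v))\ge 1$, and no other defender is available. This is exactly where the total requirement makes the $(1,1)$ case harder than the $(1,0)$ case you are imitating; a weight-$1$ vertex or a just-defended vertex must still be totally dominated, and your sketch never confronts this.

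The hole is closable, but only with arguments you do not supply, and they constrain the ``arbitrary'' choices you make at the outset. For instance, $u'$ should be chosen, when possible, as a neighbour of $u$ whose copy already carries positive weight (so that $f'(V(H_{u'}))=2$ and a guard survives any single move); and in the residual case ($f(V(H_u))=3$, all neighbouring copies of weight $0$) one must extract structure from the security of $f$ itself: there, defending $(u,y)$ under $f$ forces the swapped support $\{a_j,a_k,y\}$ to be a total dominating set of $H$, whence every vertex of $H$ is adjacent to at least two of the three positive vertices $a_1,a_2,a_3$ of $H_u$, and by pigeonhole every pair $\{a_i,a_j\}$ dominates $H$ --- which is what rules out the bad vertex $y$ and dictates how $v_1,v_2$ may be kept. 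Without this (or some equivalent) analysis, the contradiction with the minimality of $|R_f|$ is not established, because $f'$ has not been shown secure; asserting that ``the slack from $f(V(H_u))\ge 3$ together with the extra mass transferred to $H_{u'}$ suffices'' is precisely the point at issue, not a proof of it. As written, the proposal is an incomplete adaptation: correct skeleton, but the decisive case of the security verification is wrong as stated.
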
 

The following result shows that the secure total domination number of $G\circ H$ equals  $\gamma_{w}^s(G)$ or $\gamma_{w}(G)$ for certain vectors $w$ of three components. As we can expect, the decision on whether  the components of $w$ take specific values will depend on the value of $\gamma_{(1,1)}^s(H)$ and/or $\gamma(H)$. 

\begin{theorem}\label{SecureTotalLexicographic}
For a graph $G$ with no isolated vertex and a nontrivial graph $H$, the following statements hold.
 
\begin{enumerate}
\item[{\rm (i)}] If $\gamma_{(1,1)}^s(H)=2$, then $\gamma_{(1,1)}^s(G\circ H)=\gamma_{(1,1,0)}^s(G)$.
\\
\item[{\rm (ii)}] If $\gamma(H)=1$ and $\gamma_{(1,1)}^s(H)\ge 3$, then $\gamma_{(1,1)}^s(G\circ H)=\gamma_{(1,1,1)}^s(G)$.
\\
\item[{\rm (iii)}] If $\gamma(H)=2<\gamma_{(1,1)}^s(H)$, then $\gamma_{(1,1)}^s(G\circ H)=\gamma_{(2,2,1)}(G)$.
\\
\item[{\rm (iv)}] If $\gamma(H)\geq 3$, then $\gamma_{(1,1)}^s(G\circ H)=\gamma_{(2,2,2)}(G)$.
\end{enumerate}
\end{theorem}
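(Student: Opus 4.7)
The plan is to adapt the template used in the proof of Theorem~\ref{teo-equality-gamma(1,0)}. Invoke Lemma~\ref{lem-st-lexicographic} to fix a $\gamma_{(1,1)}^s(G\circ H)$-function $f$ with $f(V(H_u))\le 2$ for every $u\in V(G)$, and let $f'(X_0,X_1,X_2)$ be the column function on $G$ defined by $X_i=\{x\in V(G):f(V(H_x))=i\}$, so that $\omega(f)=\omega(f')$. For the lower bound in each case it suffices to show that $f'$ is a (secure) $w$-dominating function on $G$ for the appropriate~$w$.

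To bound $f'(N(x))$ from below I combine three mechanisms. The total domination of $f$ on $G\circ H$, applied at a vertex of $V(H_x)$ where the $H$-side contribution vanishes, gives $f'(N(x))\ge 1$ for every $x\in X_0\cup X_1$. The security of $f$ applied to a defender move $(x',y')\to(x,y)$ yields $f'(N(x))\ge 2$ for every $x\in X_0$: post-move the only positive vertex in $V(H_x)$ is $(x,y)$, and since $y\notin N_H(y)$ the $G$-side must carry the $(1,1)$-domination requirement at $(x,y)$, giving $f'(N(x))-1\ge 1$. The hypothesis $\gamma_{(1,1)}^s(H)\ge 3$ (cases (ii)--(iv)) rules out two weights in $V(H_x)$ being a secure total dominating set of $H$, forcing an external defender for some $(x,y)\in V_0$ and hence $f'(N(x))\ge 1$ for $x\in X_2$; and the hypothesis $\gamma(H)\ge 2$ (cases (iii), (iv)), sharpened to $\gamma(H)\ge 3$ in case~(iv), produces for a well-chosen~$y$ a vertex $y'$ not dominated by the post-move positive vertices of $V(H_x)$ whose total domination forces $f'(N(x))\ge 2$ on $X_1$ (cases (iii), (iv)) and on $X_2$ (case (iv)). Assembling these inequalities yields the $(1,1,0)$-, $(1,1,1)$-, $(2,2,1)$- and $(2,2,2)$-domination of $f'$ in cases (i)--(iv) respectively.

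For the upper bounds I construct a secure $(1,1)$-dominating function $g'$ on $G\circ H$ from a $\gamma_{w}(G)$- or $\gamma_{w}^s(G)$-function $g(W_0,W_1,W_2)$ of the target $w$. Case~(i) uses a $\gamma_{(1,1)}^s(H)$-function with support $\{v_1,v_2\}$, necessarily an edge of $H$, and sets $W_1'=W_1\times\{v_1\}\cup W_2\times\{v_1,v_2\}$. Case~(ii) places $g(x)$ entirely at $(x,v)$ for $v$ a universal vertex of $H$, so that the secure $(1,1,1)$-domination of $g$ lifts to $g'$ by universality of $v$. Case~(iv) applies Corollary~\ref{CorollarySecureSumaUno} to reduce to $\gamma_{(2,2)}(G\circ H)\le\gamma_{(2,2,2)}(G)$, which follows from $g(N_G(x))\ge 2$ for every $x$ under the construction $W_1'=W_1\times\{v_1\}\cup W_2\times\{v_1,v_2\}$ with an arbitrary $\{v_1,v_2\}\subseteq V(H)$. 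Case~(iii) uses the same construction with $\{v_1,v_2\}$ a dominating set of $H$ (provided by $\gamma(H)=2$) and verifies the secure property by direct case analysis on defenders.

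The main obstacle is the upper bound in case~(iii). Unlike case~(iv), the route through $\gamma_{(2,2)}(G\circ H)$ is unavailable: when the total domination number of $H$ exceeds~$2$ (for instance $H=2K_2$), $\gamma_{(2,2)}(G\circ H)$ can strictly exceed $\gamma_{(2,2,1)}(G)$, since the construction fails to $(2,2)$-dominate vertices $(x,v_i)\in W_2\times\{v_1,v_2\}$ when $v_1v_2\notin E(H)$. The security of $g'$ must therefore be verified directly. The delicate subcase is a vertex $x\in W_0$ with $N_G(x)\cap W_2=\varnothing$: here $g(N_G(x))\ge 2$ forces $|N_G(x)\cap W_1|\ge 2$, and for a defender $(u,v_1)\to(x,y)$ with $u\in N_G(x)\cap W_1$ one must check that no vertex $(x''',v_i)$ with $x'''\in W_2\cap N_G(u)$ loses all its positive neighbors in the modified function, even when $v_1v_2\notin E(H)$. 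The resolution exploits both structural properties of $\gamma_{(2,2,1)}(G)$-functions and the redundancy provided by the two $W_1$-neighbors of $x$, ensuring a valid defender choice in every configuration.
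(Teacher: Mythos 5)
Your overall architecture is the paper's: Lemma~\ref{lem-st-lexicographic}, the column projection $f'(X_0,X_1,X_2)$, and lifted constructions for the upper bounds; and your lower-bound mechanisms are sound --- in fact your derivation of $f'(N(x))\ge 2$ on $X_0$ from the post-move requirement at $(x,y)$, and on $X_1,X_2$ from a vertex $y$ with $N_H[y]$ disjoint from the column support, is cleaner than the paper's terse appeal to $\gamma_{(1,1)}^s(H_x)\ge 3$. One incompleteness first: in cases (i) and (ii) the right-hand sides are the \emph{secure} parameters $\gamma_{(1,1,0)}^s(G)$ and $\gamma_{(1,1,1)}^s(G)$, while ``assembling these inequalities'' yields only the plain domination conditions, hence only $\omega(f')\ge\gamma_{(1,1,0)}(G)$, resp.\ $\gamma_{(1,1,1)}(G)$, which is weaker. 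You must additionally verify that $f'$ is secure: for $x\in X_0$ pick $(x',y')\in M_f(x,y)$ (necessarily $x'\in N_G(x)$, as column $x$ carries no weight) and check that $f'_{x'\rightarrow x}$ inherits each condition from $f_{(x',y')\rightarrow (x,y)}$ being a $(1,1)$-dominating function on $G\circ H$; this defender-transfer step is exactly what the paper does in its Cases 1 and 2 and is missing from your sketch.

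The genuine gap is case (iii), at precisely the ``delicate subcase'' you flag: your claimed resolution (``a valid defender choice in every configuration'', from the redundancy of two $W_1$-neighbours plus structural properties of $\gamma_{(2,2,1)}(G)$-functions) is false, and --- for what it is worth --- the corresponding assertion in the paper's own Case 3 fails at the same spot. Take $H=2K_2$ or $H=C_6$ (both satisfy $\gamma(H)=2<\gamma_{(1,1)}^s(H)$, every $\gamma(H)$-set is non-adjacent, and $\gamma_t(H)\ge 3$), and let $G$ be the path $x_1x_2x_3x_4x_5$ with three leaves attached to each of $x_1$ and $x_5$. A short case analysis shows $\gamma_{(2,2,1)}(G)=6$ with a \emph{unique} minimum function: $g(x_1)=g(x_5)=2$, $g(x_2)=g(x_4)=1$, zero elsewhere. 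In the lift $W_1'=(W_1\times\{v_1\})\cup(W_2\times\{v_1,v_2\})$ the only defenders of a vertex $(x_3,y)$ are $(x_2,v_1)$ and $(x_4,v_1)$, and both fail: after $(x_2,v_1)\rightarrow(x_3,y)$ the vertex $(x_1,v_1)$ has no positive neighbour, since $v_1v_2\notin E(H)$, the weight-$0$ intra-column neighbours of $v_1$ give nothing, $x_3\notin N_G(x_1)$, and the unique external unit in $N_G(x_1)$ sat on column $x_2$; symmetrically for $(x_4,v_1)$. Moreover no choice of $g$ or of positions can repair this: by your own (correct) lower-bound mechanisms, any weight-$6$ secure $(1,1)$-dominating function on $G\circ H$ with columns of weight at most $2$ (Lemma~\ref{lem-st-lexicographic}) projects onto this unique $g$, and after any defence of column $x_3$ the two positive vertices remaining in column $x_1$ (or $x_5$) would have to totally dominate $H$ from inside the column, impossible since $\gamma_t(H)\ge 3$. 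Hence $\gamma_{(1,1)}^s(G\circ H)\ge 7>6=\gamma_{(2,2,1)}(G)$, so equality in (iii) itself fails for such $H$; the subcase you isolated is not merely delicate but fatal, and any repair must restrict $H$ (roughly, one needs a dominating pair that also totally dominates, e.g.\ $\gamma_t(H)=2$) or change the right-hand side, rather than argue harder about defender choices.
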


\begin{proof}
Let $f(V_0,V_1)$ be a $\gamma_{(1,1)}^s(G\circ H)$-function which satisfies Lemma \ref{lem-st-lexicographic}.  Let $f'(X_0,X_1,X_2)$ be the function defined on $G$ by $X_1=\{x\in V(G): f(V(H_x))=1\}$ and $X_2=\{x\in V(G): f(V(H_x))=2\}$. With this notation in mind, we differentiate the following four cases.

\vspace{.2cm}
\noindent
Case $1.$ $\gamma_{(1,1)}^s(H)=2$. If $x\in X_0\cup X_1$, then  $f'(N(x))=f(N(V(H_x))\setminus V(H_x))\ge  1$, which implies that  $f'$ is a  $(1,1,0)$-dominating function on $G$. Now, for any  $(x,y)\in X_0\times  V(H)$,  there exists $(x',y')\in M_f(x,y)$ with $x'\in N(x)\cap(X_1\cup X_2)$. Hence, for any $u\in X_0\cup X_1\cup \{x'\}$ we have that $f'_{x'\rightarrow x}(N(u))=f_{(x',y')\rightarrow (x,y)}(N(V(H_u))\setminus V(H_u))\ge 1$, which implies that 
$f'$ is a secure $(1,1,0)$-dominating function on $G$. Therefore, $\gamma_{(1,1)}^s(G\circ H)=\omega(f)=\omega(f')\ge \gamma_{(1,1,0)}^s(G)$.

On the other side, 
for any $\gamma_{(1,1,0)}^s(G)$-function  $g(W_0,W_1,W_2)$ and any two universal vertices $v_1,v_2$ of $H$, the function $g'(W_0',W_1')$, defined by $W_1'=(W_1\times \{v_1\})\cup (W_2\times \{v_1,v_2\})$, is a $(1,1)$-dominating function on $G\circ H$. Now,  for any  $(x,y)\in W_0'\setminus W_0\times V(H)$, we can see that $g'_{(x,v_1)\rightarrow (x,y)}$ is a $(1,1)$-dominating function on $G\circ H$.
Furthermore, for every $x\in W_0$ there exists $x'\in M_{g}(x)$, and so $g'_{(x',v_1)\rightarrow (x,y)}$ is a $(1,1)$-dominating function on $G\circ H$
for every  $(x,y)\in W_0\times V(H)$.
Therefore, $\gamma_{(1,1)}^s(G\circ H)\leq \omega(g')=\omega(g)=\gamma_{(1,1,0)}^s(G)$.


\vspace{.2cm}
\noindent
Case $2.$ $\gamma(H)=1$ and $\gamma_{(1,1)}^s(H)\ge 3$. 
 Since $f(V(H_x)\le 2$ and $\gamma_{(1,1)}^s(H_x)\ge 3$ for any $x\in V(G)$, we have that $f'(N(x))=f(N(V(H_x))\setminus V(H_x))\ge 1$. 
 Thus, $f'$ is a  $(1,1,1)$-dominating function on $G$. 
 Now, for any  $(x,y)\in X_0\times  V(H)$,  there exists $(x',y')\in M_f(x,y)$ with $x'\in N(x)\cap(X_1\cup X_2)$. Hence, for any $u\in V(G)$ we have that $f'_{x'\rightarrow x}(N(u))=f_{(x',y')\rightarrow (x,y)}(N(V(H_u))\setminus V(H_u))\ge 1$, which implies that    
$f'$ is a secure $(1,1,1)$-dominating function on $G$. Therefore, $\gamma_{(1,1)}^s(G\circ H)=\omega(f)=\omega(f')\ge \gamma_{(1,1,1)}^s(G)$.

Now we show that 
$\gamma_{(1,1)}^s(G\circ H)\le \gamma_{(1,1,1)}^s(G)$.
Let $v$ be the universal vertex of $H$, which is unique, as $\gamma_{(1,1)}^s(H)\ge 3$.  For any $\gamma_{(1,1,1)}^s(G)$-function  $g(W_0,W_1,W_2)$,  the function $g'(W_0',W_1',W'_2)$, defined by $W_1'=W_1\times \{v\}$ and $W_2'=W_2\times \{v\}$, is a $(1,1,1)$-dominating function on $G\circ H$. Now,  for any  $(x,y)\in W_0'\setminus W_0\times V(H)$, we can see that $g'_{(x,v)\rightarrow (x,y)}$ is a $(1,1,1)$-dominating function on $G\circ H$.
Furthermore, for every $x\in W_0$ there exists $x'\in M_{g}(x)$, and so $g'_{(x',v)\rightarrow (x,y)}$ is a $(1,1,1)$-dominating function on $G\circ H$
for every  $(x,y)\in W_0\times V(H)$.
Therefore, $g'$ is a secure $(1,1,1)$-dominating function on $G\circ H$, and
by Theorem~\ref{teo-st=tr-lexicographic} we deduce that $\gamma_{(1,1)}^s(G\circ H)=\gamma_{(1,1,1)}^s(G\circ H)\leq \omega(g')=\omega(g)=\gamma_{(1,1,1)}^s(G)$.


\vspace{.2cm}
\noindent
Case $3.$ $\gamma(H)=2<\gamma_{(1,1)}^s(H)$. Assume first that $x\in X_0\cup X_1$.  Since $\gamma_{(1,1)}^s(H_x)\ge 3$, we have that $f'(N(x))=f(N(V(H_x))\setminus V(H_x))\ge 2$. Analogously, for any  $x\in X_2$ we deduce that  $f'(N(x))=f(N(V(H_x))\setminus V(H_x))\geq 1$. Therefore,  $f'$ is a  $(2,2,1)$-dominating function on $G$  and, as a consequence, $\gamma_{(1,1)}^s(G\circ H)=\omega(f)=\omega(f')\ge \gamma_{(2,2,1)}(G)$.

On the other side, for any $\gamma_{(2,2,1)}(G)$-function  $g(W_0,W_1,W_2)$ and any $\gamma(H)$-set  $\{v_1,v_2\}$, the function $g'(W_0',W_1')$, defined by $W_1'=(W_1\times \{v_1\})\cup (W_2\times \{v_1,v_2\})$, is a  $(1,1)$-dominating function on $G\circ H$. Now, for any $(x,y)\in W_0'\setminus  W_2\times V(H)$ and $x'\in N(x)\cap (W_1\cup W_2)$ we have that $g'_{(x',v_1)\rightarrow (x,y)}(N(u,v))\ge 1$ for every $(u,v)\in V(G\circ H)$. 
Moreover, for any $(x,y)\in W_0'\cap (W_2\times V(H))$  we have that $g'_{(x,v_1)\rightarrow (x,y)}(N(u,v))\ge 1$ or $g'_{(x,v_2)\rightarrow (x,y)}(N(u,v))\ge 1$ for every $(u,v)\in V(G\circ H)$. 
Therefore, $g'$ is a secure $(1,1)$-dominating function on $G\circ H$, which implies that  $\gamma_{(1,1)}^s(G\circ H)\le \omega(g')=\omega(g)=\gamma_{(2,2,1)}(G)$.
 
\vspace{0,2cm}
\noindent
Case $4.$ $\gamma(H)\ge 3$.
In this case, for every $x\in V(G)$, there exists  $y\in V(H)$ such that $f(N[(x,y)]\cap V(H_x))=0$. Hence,  $f'(N(x))=f(N(x,y)\setminus V(H_{x}))\ge 2$ for every $x\in V(G)$. Therefore,  $f'$ is a $(2,2,2)$-dominating function on $G$, and so  $\gamma_{(1,1)}^s(G\circ H)=\omega(f)=\omega(f')\ge \gamma_{(2,2,2)}(G)$.

Now, for any $\gamma_{(2,2,2)}(G)$-function  $g(W_0,W_1,W_2)$ and any $v\in V(H)$, the function $g'(W_0',W_1',W_2')$, defined by $W_2'=W_2\times \{v\}$ and $W_1'=W_1\times \{v\}$, is a secure $(2,2,2)$-dominating function on $G\circ H$. Therefore, $\gamma_{(2,2,2)}(G\circ H)\le \omega(g')$, and by Theorems \ref{PreliminaryBounds-w-SecureSegundTh} and \ref{teo-st=tr-lexicographic} we deduce that $\gamma_{(1,1)}^s(G\circ H)=\gamma_{(1,1,1)}^s(G\circ H)\le \gamma_{(2,2,2)}(G\circ H)\le \omega(g')=\omega(g)= \gamma_{(2,2,2)}(G)$.
According to the cases above, the result follows.
\end{proof}

The following result is a particular case of Theorem \ref{SecureTotalLexicographic}.

\begin{corollary}
The following statements hold for a nontrivial graph $H$ and any integers $n,r$ such that $n\geq r\geq 2$.

\begin{itemize}
\item $\gamma_{(1,1)}^s(K_n\circ H)=\left\{ \begin{array}{ll}
             2 & \text{if } \,  \gamma(H)=1,\\[5pt]
             3 & \mbox{otherwise.}
                                  \end{array}\right.$
\\ 
\\                                   
\item $\gamma_{(1,1)}^s(K_{1,n-1}\circ H)=\left\{ \begin{array}{ll}
             2 & \text{if } \,  \gamma_{(1,1)}^s(H)=2,\\[5pt]
             4 & \text{if } \,  \gamma(H)\geq 3,\\[5pt]
             
             3 & \mbox{otherwise.}
                                  \end{array}\right.$
\\
\\
\item $\gamma_{(1,1)}^s(K_{n,r}\circ H)=\left\{ \begin{array}{ll}
             3 & \text{if } \,  \gamma(H)=1 \text{ and } r=2,\\[5pt]
             4 & \mbox{otherwise.}
                                  \end{array}\right.$
\end{itemize}
\end{corollary}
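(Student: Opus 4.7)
The proof is a direct application of Theorem~\ref{SecureTotalLexicographic}, so the plan is to compute, for each of $G \in \{K_n, K_{1,n-1}, K_{n,r}\}$, the four specific parameters that appear on the right-hand side of the four cases of that theorem, namely $\gamma_{(1,1,0)}^s(G)$, $\gamma_{(1,1,1)}^s(G)$, $\gamma_{(2,2,1)}(G)$ and $\gamma_{(2,2,2)}(G)$, and then read off the answer according to which case $H$ falls into. A useful preliminary observation is that $\gamma_{(1,1)}^s(H)=2$ forces $\gamma(H)=1$: if $\{u,v\}$ is a secure total dominating set of size two, then moving $u$ to any $w\notin\{u,v\}$ must keep $v$ totally dominating $u$, which forces $v$ to be universal, and by symmetry $u$ is universal as well. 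Consequently Case~(i) of Theorem~\ref{SecureTotalLexicographic} is included in ``$\gamma(H)=1$'', so the three corollary statements can be organized strictly around the values of $\gamma(H)$ together with the auxiliary split $\gamma_{(1,1)}^s(H)=2$ vs.\ $\gamma_{(1,1)}^s(H)\ge 3$.

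For $G=K_n$ (with $n\ge 3$) the four parameters are $2,2,3,3$: two adjacent vertices of weight $1$ give a secure $(1,1,0)$- or $(1,1,1)$-dominating function, while weight $3$ is necessary and sufficient for $(2,2,1)$- and $(2,2,2)$-domination. For the star $K_{1,n-1}$ the dependence on the center is pivotal: since every leaf has the center as its unique neighbor, $f(\text{center})$ must be at least $w_i$ whenever some leaf receives weight $i$. This yields $\gamma_{(1,1,0)}^s(K_{1,n-1})=2$ (center plus one leaf), $\gamma_{(1,1,1)}^s(K_{1,n-1})=3$ (take $f(\text{center})=2$ and one leaf of weight $1$, necessary because moving center weight $1$ to a leaf collapses the total domination), $\gamma_{(2,2,1)}(K_{1,n-1})=3$ and $\gamma_{(2,2,2)}(K_{1,n-1})=4$. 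Substitution into the four cases of Theorem~\ref{SecureTotalLexicographic} immediately gives the two first items of the corollary.

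The main technical step is the computation for $K_{n,r}$, which is where the $r=2$ vs.\ $r\ge 3$ dichotomy arises. The lower bound argument is: if $r\ge 3$, then for any secure $(1,1,0)$-dominating $f$ and any $b\in B$ with $f(b)=0$, moving some $a\in A$ to $b$ decreases $f(A)$ by one, and the other (at least one) uncovered vertex $b'\in B$ still needs $f(A)-1\ge 1$, forcing $f(A)\ge 2$; symmetrically $f(B)\ge 2$, hence $\omega(f)\ge 4$. For $r=2$ only the $f(A)\ge 2$ constraint survives, and the assignment $f(u_1)=f(v_1)=f(v_2)=1$ is a secure $(1,1,0)$- and $(1,1,1)$-dominating function of weight $3$; a routine check gives $\gamma_{(2,2,1)}(K_{n,r})=\gamma_{(2,2,2)}(K_{n,r})=4$ in all cases $n\ge r\ge 2$ (achieved by placing weight $2$ on one vertex per side). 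Feeding these values into Theorem~\ref{SecureTotalLexicographic}, the only configuration producing a value below $4$ is $\gamma(H)=1$ together with $r=2$, in which case the answer is $3$; every other combination of $\gamma(H)$ and $\gamma_{(1,1)}^s(H)$ hits a parameter equal to $4$, proving the last item of the corollary.
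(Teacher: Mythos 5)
Your overall route coincides with the paper's: compute $\gamma_{(1,1,0)}^s$, $\gamma_{(1,1,1)}^s$, $\gamma_{(2,2,1)}$ and $\gamma_{(2,2,2)}$ for each of $K_n$, $K_{1,n-1}$ and $K_{n,r}$, then read off the answer from the four cases of Theorem \ref{SecureTotalLexicographic}; the paper merely lists these values as facts, so your sketched justifications are a welcome addition. In one respect you are more careful than the paper: the observation that $\gamma_{(1,1)}^s(H)=2$ forces $\gamma(H)=1$ is indeed needed to make the corollary's case splits consistent with case (i) of the theorem (e.g.\ to rule out the value $3$ for $K_{n,2}\circ H$ when $\gamma(H)=2$), and the paper leaves it implicit. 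Your one-line justification of it is imprecise, though: the correct forcing is that for each $w\in V_0$ the mover $z\in\{u,v\}$ must satisfy $w\in N(z)$, while after the move $w$ itself needs a positive neighbour, so $w$ is also adjacent to the vertex of $\{u,v\}$ that did not move; hence every vertex outside $\{u,v\}$ is adjacent to both $u$ and $v$, and both are universal. You also correctly supply $\gamma_{(2,2,1)}(K_{n,2})=\gamma_{(2,2,2)}(K_{n,2})=4$, which the paper's fact list omits even though the $r=2$, $\gamma(H)\geq 2$ cases require it.

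One concrete step fails: your witness for $\gamma_{(1,1,0)}^s(K_{1,n-1})=2$, namely weight $1$ on the center $c$ and weight $1$ on one leaf $x$, is not a secure $(1,1,0)$-dominating function when $n\geq 3$. For any leaf $y\neq x$ with $f(y)=0$, the only positive neighbour of $y$ is $c$, and under $f_{c\rightarrow y}$ the vertex $x$ lies in $V_1$ with $f_{c\rightarrow y}(N(x))=f_{c\rightarrow y}(c)=0<1=w_1$, so $M_f(y)=\varnothing$. The value $2$ is nevertheless correct: assign weight $2$ to the center; then for each leaf $y$ the function $f_{c\rightarrow y}$ leaves weight $1$ on both $c$ and $y$, and this is $(1,1,0)$-dominating. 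With this repair (and granting the routine case analysis you compress into the $K_{n,r}$ lower bound, e.g.\ disposing of distributions such as $f(A)=2$, $f(B)=1$), your proof goes through and yields exactly the paper's table of values.
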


\begin{proof}
The result follows from Theorem \ref{SecureTotalLexicographic} by considering the following facts.
\begin{itemize}
\item $\gamma_{(1,1,0)}^s(K_n)=\gamma_{(1,1,1)}^s(K_n)=2$ and  $\gamma_{(2,2,1)}(K_n)=\gamma_{(2,2,2)}(K_n)=3$. 
\\
\item  $\gamma_{(1,1,0)}^s(K_{1,n-1})=2$, $\gamma_{(1,1,1)}^s(K_{1,n-1})=\gamma_{(2,2,1)}(K_{1,n-1})=3$ and $\gamma_{(2,2,2)}(K_{1,n-1})=4$.
\\
\item If $r=2$, then $\gamma_{(1,1,0)}^s(K_{n,r})=\gamma_{(1,1,1)}^s(K_{n,r})=3$, while if $r\ge 3$, then $\gamma_{(1,1,0)}^s(K_{n,r})=\gamma_{(1,1,1)}^s(K_{n,r})=\gamma_{(2,2,1)}(K_{n,r})=\gamma_{(2,2,2)}(K_{n,r})=4$.
\end{itemize}
\end{proof}

By combining Theorem \ref{SecureTotalLexicographic} and some known results we derive the following results.

\begin{theorem}\label{SecureTotalLexicographic-Paths}
The following statements hold for any integer $n\ge 4$ and any nontrivial graph $H$.
\begin{enumerate}
\item[{\rm (i)}] If $\gamma_{(1,1)}^s(H)=2$, then $\gamma_{(1,1)}^s(P_n\circ H)=2\left\lceil \frac{n}{3}\right\rceil$.\\
\item[{\rm (ii)}] If $\gamma(H)=1$ and $\gamma_{(1,1)}^s(H)\ge 3$, then $\gamma_{(1,1)}^s(P_n\circ H)=\gamma_{(1,1,1)}^s(P_n)\stackrel{\mbox{\rm \cite{TWRDF2018}}}{=}\left\lceil \frac{5(n-2)}{7}\right\rceil +2.$ \\
\item[{\rm (iii)}] If $\gamma(H)=2<\gamma_{(1,1)}^s(H)$, then $\gamma_{(1,1)}^s(P_n\circ H)=\gamma_{(2,2,1)}(P_n)\stackrel{\mbox{\rm \cite{w-domination}}}{=} \left\{ \begin{array}{ll}
             n-\lfloor\frac{n}{7}\rfloor+1 & \text{if } \,  n\equiv 1,2 \pmod 7,\\[5pt]
             n-\lfloor\frac{n}{7}\rfloor & \mbox{otherwise.}
                                  \end{array}\right.$
\item[{\rm (iv)}] If $\gamma(H)\geq 3$, then $\gamma_{(1,1)}^s(P_n\circ H)=\gamma_{(2,2,2)}(P_n)\stackrel{\mbox{\rm \cite{w-domination}}}{=}\left\{ \begin{array}{ll}
n & if \, n\equiv 0\pmod 4,\\[5pt]
n+1 & if \,  n\equiv 1,3\pmod 4,\\[5pt]
n+2  & if \,  n\equiv 2\pmod 4.
\end{array}\right. $
\end{enumerate}
\end{theorem}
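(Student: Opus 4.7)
Parts (ii)--(iv) follow by directly applying the corresponding cases of Theorem~\ref{SecureTotalLexicographic}: each hypothesis on $H$ identifies $\gamma_{(1,1)}^s(P_n\circ H)$ with one of $\gamma_{(1,1,1)}^s(P_n)$, $\gamma_{(2,2,1)}(P_n)$, or $\gamma_{(2,2,2)}(P_n)$, and the closed-form values of these three quantities on $P_n$ are exactly the formulas displayed in the statement, taken from \cite{TWRDF2018} and \cite{w-domination}. So the only case requiring genuine work is (i), which by Theorem~\ref{SecureTotalLexicographic}(i) reduces to proving $\gamma_{(1,1,0)}^s(P_n)=2\lceil n/3 \rceil$.

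For the upper bound, I would write $n=3k+r$ with $r\in\{0,1,2\}$ and exhibit the explicit function on $P_n=x_1x_2\cdots x_n$ obtained by concatenating $k$ copies of the block $(0,2,0)$ and then appending the tail $()$, $(2)$, or $(0,2)$ according to $r=0,1,2$; the total weight is $2\lceil n/3 \rceil$. Verification is routine: every zero-vertex sees a weight-$2$ neighbour, so the $(1,1,0)$-dominating condition holds; and to defend a zero-vertex $v$ I transfer one unit from an adjacent weight-$2$ vertex $u$, which leaves $v$ at weight $1$ with $u$ itself as a weight-$1$ companion, while any other vertex $w\ne v$ adjacent to $u$ either has weight $2$ (hence no constraint) or was originally a zero-vertex with $f(N(w))\ge 2$ thanks to $u$, so that $f(N(w))\ge 1$ after the transfer.

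For the lower bound, let $f$ be any secure $(1,1,0)$-dominating function on $P_n$. I establish (A) if $1<i<n$, then $f(x_{i-1})+f(x_i)+f(x_{i+1})\ge 2$, and (B) $f(x_1)+f(x_2)\ge 2$ together with its symmetric counterpart $f(x_{n-1})+f(x_n)\ge 2$. Both (A) and (B) follow from a three-case split on the weight at $x_i$ (resp.\ at the boundary vertex): weight $\ge 2$ is trivial; weight $1$ triggers $f(N(x_i))\ge 1$ from the dominating condition; and weight $0$ invokes the security property, since any defender $u$ transferring one unit leaves $x_i$ at weight $1$, requiring $f(N(x_i))-1\ge 1$ and hence $f(N(x_i))\ge 2$. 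Summing the resulting estimates after partitioning $V(P_n)$ into $k$ interior $3$-blocks when $n=3k$; into $\{x_1,x_2\}$, $k-1$ interior $3$-blocks, and $\{x_{n-1},x_n\}$ when $n=3k+1$; and into $k$ interior $3$-blocks followed by $\{x_{n-1},x_n\}$ when $n=3k+2$, yields $\omega(f)\ge 2\lceil n/3 \rceil$, matching the upper bound. The main obstacle is organising this partition so that every $3$-block used for (A) has a genuinely interior centre; in the residue $n\equiv 1\pmod 3$ this forces two boundary $2$-blocks flanking the interior $3$-blocks, which is why the three residues must be handled separately.
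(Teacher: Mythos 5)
Your proposal is correct, and for parts (ii)--(iv) it coincides with the paper: both arguments reduce immediately to the corresponding cases of Theorem \ref{SecureTotalLexicographic} together with the cited values of $\gamma_{(1,1,1)}^s(P_n)$, $\gamma_{(2,2,1)}(P_n)$ and $\gamma_{(2,2,2)}(P_n)$. For part (i), however, you take a genuinely different route to the key identity $\gamma_{(1,1,0)}^s(P_n)=2\lceil n/3\rceil$. The paper never computes this quantity from scratch: it sandwiches it between two ordinary $w$-domination numbers, using Theorem \ref{PreliminaryBounds-w-Secure}\,(ii) with $k=1$ to get $\gamma_{(2,1,0)}(P_n)\le\gamma_{(1,1,0)}^s(P_n)$ and Corollary \ref{CorollarySecureSumaUno} to get $\gamma_{(1,1,0)}^s(P_n)\le\gamma_{(2,2,0)}(P_n)$, and then quotes $\gamma_{(2,1,0)}(P_n)=\gamma_{(2,2,0)}(P_n)=2\lceil n/3\rceil$ from \cite{w-domination}. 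You instead prove the identity directly: the $(0,2,0)$-block construction for the upper bound, and for the lower bound the window inequalities $f(x_{i-1})+f(x_i)+f(x_{i+1})\ge 2$ for interior $i$ and $f(x_1)+f(x_2)\ge 2$ (with its mirror at the other end), combined with the three-residue partition. Your case analysis is sound: in the weight-$0$ case the defender $u\in N(x_i)$ forces $f_{u\rightarrow x_i}(N(x_i))=f(N(x_i))-1\ge 1$, hence $f(N(x_i))\ge 2$, and your construction does verify as secure, since the two consecutive zeros at block junctions each see a weight-$2$ vertex and any legal move leaves every zero vertex with neighbourhood weight at least $1$. What the paper's route buys is brevity, given the general machinery of Section \ref{NewSecureDomination} and the values already computed in \cite{w-domination}; what yours buys is self-containedness, since it avoids any dependence on those external values and establishes Proposition \ref{(110)s-caminos} directly rather than as a byproduct of the theorem. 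It is also worth noting that your window-counting lower bound is close in spirit to the argument the paper itself deploys for the cycle analogue in the proof of Theorem \ref{SecureTotalLexicographic-Cycles}, so your approach fits naturally into the paper's toolkit even though it departs from the proof actually given here.
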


\begin{proof}
As indicated in the statements, we only need to prove (i). In this case, by  Theorem \ref{SecureTotalLexicographic} we know that  
$\gamma_{(1,1)}^s(P_n\circ H)=\gamma_{(1,1,0)}^s(P_n)$. Now, by Theorem \ref{PreliminaryBounds-w-Secure} (ii) and Corollary \ref{CorollarySecureSumaUno} we deduce that $\gamma_{(2,1,0)}(P_n)\le \gamma_{(1,1,0)}^s(P_n)\le \gamma_{(2,2,0)}(P_n)$. Moreover, as shown in \cite{w-domination}, $\gamma_{(2,1,0)}(P_n)=\gamma_{(2,2,0)}(P_n)=2\left\lceil \frac{n}{3}\right\rceil$, which completes the proof.
\end{proof}

By the result above and Theorem \ref{SecureTotalLexicographic} we deduce the following result.

\begin{proposition}\label{(110)s-caminos}
For any integer $n\ge 4$,
$$\gamma_{(1,1,0)}^s(P_n)=2\left\lceil \frac{n}{3}\right\rceil.$$
\end{proposition}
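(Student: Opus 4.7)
My plan is to derive the proposition by stringing together two results already established in the paper, using $\gamma_{(1,1)}^s(P_n\circ H)$ as an intermediate quantity for a suitably chosen $H$.

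First, I would fix any nontrivial graph $H$ with $\gamma_{(1,1)}^s(H)=2$; for concreteness one could take $H=K_2$, since $\gamma_{(1,1)}^s(K_2)=2$. Then Theorem~\ref{SecureTotalLexicographic}(i) applies and yields the identity
\[
\gamma_{(1,1)}^s(P_n\circ H)=\gamma_{(1,1,0)}^s(P_n).
\]
Next, the same hypothesis on $H$ places us in case (i) of Theorem~\ref{SecureTotalLexicographic-Paths}, which asserts that $\gamma_{(1,1)}^s(P_n\circ H)=2\lceil n/3\rceil$. Combining these two equalities immediately gives $\gamma_{(1,1,0)}^s(P_n)=2\lceil n/3\rceil$, as claimed.

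Alternatively, I could bypass the intermediate parameter by invoking the sandwich used inside the proof of Theorem~\ref{SecureTotalLexicographic-Paths}(i): Theorem~\ref{PreliminaryBounds-w-Secure}(ii) and Corollary~\ref{CorollarySecureSumaUno} yield
\[
\gamma_{(2,1,0)}(P_n)\le \gamma_{(1,1,0)}^s(P_n)\le \gamma_{(2,2,0)}(P_n),
\]
and the known identity $\gamma_{(2,1,0)}(P_n)=\gamma_{(2,2,0)}(P_n)=2\lceil n/3\rceil$ from \cite{w-domination} collapses the sandwich to the desired value. Either route is essentially a one-line deduction, so there is no significant obstacle; the only thing to verify is that the chosen $H$ satisfies the hypothesis $\gamma_{(1,1)}^s(H)=2$, which is routine (and can even be omitted by going through the second route, which makes no reference to $H$ at all).
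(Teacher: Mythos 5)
Your proposal is correct and takes essentially the same route as the paper: the paper deduces the proposition precisely by combining Theorem~\ref{SecureTotalLexicographic}\,(i) with Theorem~\ref{SecureTotalLexicographic-Paths}\,(i) (your first route, where $H=K_2$ indeed satisfies $\gamma_{(1,1)}^s(K_2)=2$). Your second route, the sandwich $\gamma_{(2,1,0)}(P_n)\le\gamma_{(1,1,0)}^s(P_n)\le\gamma_{(2,2,0)}(P_n)=2\left\lceil \frac{n}{3}\right\rceil$ via Theorem~\ref{PreliminaryBounds-w-Secure}\,(ii) and Corollary~\ref{CorollarySecureSumaUno}, is exactly the argument the paper uses inside the proof of Theorem~\ref{SecureTotalLexicographic-Paths}\,(i), so it is the same reasoning with the detour through $P_n\circ H$ removed.
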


The following result concerns the case when $G$ is a cycle.

\begin{theorem}\label{SecureTotalLexicographic-Cycles}
The following statements hold for any integer $n\ge 4$ and any nontrivial graph $H$.
\begin{enumerate}
\item[{\rm (i)}] If $\gamma_{(1,1)}^s(H)=2$, then $\gamma_{(1,1)}^s(C_n\circ H)=\gamma_{(1,1,0)}^s(C_n)=
\left\{ \begin{array}{ll}
\left\lceil \frac{2n}{3}\right\rceil & if \, n=4,7,\\[5pt]
2\left\lceil \frac{n}{3}\right\rceil & \text{otherwise}.
\end{array}\right. $
\\
\item[{\rm (ii)}] If $\gamma(H)=1$ and $\gamma_{(1,1)}^s(H)\ge 3$, then $\gamma_{(1,1)}^s(C_n\circ H)=\gamma_{(1,1,1)}^s(C_n)\stackrel{\mbox{\rm \cite{TWRDF2018}}}{=}\left\lceil \frac{5n}{7}\right\rceil.$ \\
\item[{\rm (iii)}] If $\gamma(H)=2<\gamma_{(1,1)}^s(H)$, then $$\gamma_{(1,1)}^s(C_n\circ H)=\gamma_{(2,2,1)}(C_n)\stackrel{\mbox{\rm \cite{w-domination}}}{=} \left\{ \begin{array}{ll}
             n-\lfloor\frac{n}{7}\rfloor+1 & \text{if } \,  n\equiv 1,2 \pmod 7,\\[5pt]
             n-\lfloor\frac{n}{7}\rfloor & \mbox{otherwise.}
                                  \end{array}\right.$$
\item[{\rm (iv)}] If $\gamma(H)\geq 3$, then $\gamma_{(1,1)}^s(C_n\circ H)=\gamma_{(2,2,2)}(C_n)\stackrel{\mbox{\rm \cite{w-domination}}}{=}n. $
\end{enumerate}
\end{theorem}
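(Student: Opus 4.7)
The plan is to apply Theorem~\ref{SecureTotalLexicographic} in each of the four cases to reduce $\gamma_{(1,1)}^s(C_n\circ H)$ to a (secure) $w$-domination number of $C_n$, and then appeal to known formulas. Parts (ii), (iii) and (iv) are then immediate, since the values $\gamma_{(1,1,1)}^s(C_n)$, $\gamma_{(2,2,1)}(C_n)$ and $\gamma_{(2,2,2)}(C_n)$ are already recorded in \cite{TWRDF2018} and \cite{w-domination}. The substantive work is therefore part~(i), where the reduction yields $\gamma_{(1,1)}^s(C_n\circ H)=\gamma_{(1,1,0)}^s(C_n)$ and the closed form for $\gamma_{(1,1,0)}^s(C_n)$ must be established.

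To bound $\gamma_{(1,1,0)}^s(C_n)$, I would first combine Theorem~\ref{PreliminaryBounds-w-Secure}(ii) with Corollary~\ref{CorollarySecureSumaUno} to obtain the sandwich
\[
\gamma_{(2,1,0)}(C_n)\le \gamma_{(1,1,0)}^s(C_n)\le \gamma_{(2,2,0)}(C_n),
\]
and plug in the values $\gamma_{(2,1,0)}(C_n)=\lceil 2n/3\rceil$ and $\gamma_{(2,2,0)}(C_n)=2\lceil n/3\rceil$ from \cite{w-domination}. When $n\not\equiv 1\pmod 3$ the two bounds coincide at $2\lceil n/3\rceil$, settling those cases. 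For the exceptional lengths $n\in\{4,7\}$, I would exhibit explicit secure $(1,1,0)$-dominating functions of weight $\lceil 2n/3\rceil$: on $C_4=v_1v_2v_3v_4v_1$ take $f(v_1)=f(v_2)=f(v_3)=1$ and $f(v_4)=0$, using $v_1\to v_4$ as the defender; on $C_7=v_1v_2\cdots v_7v_1$ take $f(v_i)=1$ for $i\in\{1,2,4,5,6\}$ and $f(v_i)=0$ otherwise, using $v_4\to v_3$ and $v_6\to v_7$ as defenders. Routine checks show these are $(1,1,0)$-dominating and that each defender shift preserves $(1,1,0)$-domination, and the matching lower bound is $\gamma_{(2,1,0)}(C_n)=\lceil 2n/3\rceil$.

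The main obstacle is the remaining case $n\equiv 1\pmod 3$ with $n\ge 10$, where the sandwich leaves a gap of one and it must be shown that no secure $(1,1,0)$-dominating function achieves weight $\lceil 2n/3\rceil=(2n+1)/3$. My approach is a structural analysis: any secure $(1,1,0)$-dominating function of this weight is in particular a minimum-weight $(2,1,0)$-dominating function on $C_n$, and for $n\equiv 1\pmod 3$ such minimizers are forced to consist of the periodic block $110$ repeated around the cycle together with a single irregular segment of length four that absorbs the remainder. Enumerating the few inequivalent placements of that irregular segment, I would locate a vertex $v\in V_0$ whose unique candidate defenders $u\in N(v)\cap V_1$ each have a neighbor $w\in V_1$ with $f(N(w))=1$; shifting one unit from $u$ then drops the constraint at $w$ below~$1$, so $f_{u\to v}$ fails to be $(1,1,0)$-dominating. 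For $n\in\{4,7\}$ this obstruction disappears because the irregular segment fills (almost) the whole cycle, which is why those two lengths are genuine exceptions; for $n\ge 10$ the periodic region separates $v$ from the irregular segment far enough to make the obstruction unavoidable. This establishes $\gamma_{(1,1,0)}^s(C_n)\ge 2\lceil n/3\rceil$ in the outstanding case and completes part~(i).
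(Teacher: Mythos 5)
Your overall architecture coincides with the paper's: the same reduction via Theorem \ref{SecureTotalLexicographic} disposes of (ii)--(iv) by citation, and for (i) the same sandwich $\gamma_{(2,1,0)}(C_n)\le\gamma_{(1,1,0)}^s(C_n)\le\gamma_{(2,2,0)}(C_n)$ (Theorem \ref{PreliminaryBounds-w-Secure}(ii) plus Corollary \ref{CorollarySecureSumaUno}) settles $n\not\equiv 1\pmod 3$; the paper simply declares $n=4,7$ easy, whereas your explicit certificates on $C_4$ and $C_7$ are correct as I checked them. Where you genuinely diverge is the hard case $n\equiv 1\pmod 3$, $n\ge 10$. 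The paper works directly with an arbitrary secure $(1,1,0)$-dominating function $f$ on $C_n$: if two consecutive vertices have weight $0$ it passes to the path and invokes Proposition \ref{(110)s-caminos}; otherwise it shows every three consecutive vertices carry weight at least $2$ and finishes by a short case analysis ($V_2\ne\varnothing$, anchored at a weight-two vertex, versus $V_2=\varnothing$, with a local analysis around a defended vertex). Your route --- classify the minimum-weight $(2,1,0)$-dominating functions on $C_n$ and exhibit an undefendable zero in each --- is a different decomposition that avoids the path proposition entirely, at the price of needing a full structure lemma rather than window counting alone.

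That structure lemma is the crux, and as written it has two gaps. First, your ``in particular a minimum-weight $(2,1,0)$-dominating function'' does not follow from the parameter inequality of Theorem \ref{PreliminaryBounds-w-Secure}(ii); you need the pointwise fact that a secure $(1,1,0)$-dominating function is itself $(2,1,0)$-dominating, which holds because for $v\in V_0$ with defender $u$ one has $f_{u\rightarrow v}(N(v))=f(N(v))-1\ge 1$, whence $f(N(v))\ge 2$. Second, and more seriously, your claim that minimizers are ``$110$ repeated plus one irregular segment of length four'' is asserted without proof and overlooks that $(2,1,0)$-dominating functions take values in $\{0,1,2\}$: the block $200$ also achieves window weight $2$, so $200$-periodic or mixed patterns are a priori competitors. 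The claim is in fact true, but proving it takes real work: every window of three consecutive vertices has weight at least $2$ (middle weight $0$ forces the ends to sum to $\ge 2$; middle weight $1$ forces $\ge 1$; middle weight $2$ is trivial), so weight $(2n+1)/3$ forces exactly one window of weight $3$ and all others of weight exactly $2$; comparing consecutive windows yields $f(x_{i+3})=f(x_i)$ away from the heavy window, and since $\gcd(3,n)=1$ the step-three orbit is a single cycle carrying one $+1$ and one $-1$ jump, so $f$ takes exactly two consecutive values, and $\{1,2\}$ is excluded by the weight count --- leaving the $\{0,1\}$-valued $110$-periodic pattern with one $1110$ defect. Granting this, your obstruction does work: every zero flanked on both sides by full $110$ blocks is undefendable, and such a zero exists exactly when $(n-4)/3\ge 2$, i.e.\ $n\ge 10$, which also explains why $n=4,7$ escape. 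So the plan is sound and completable, but the classification step you treat as known is precisely what must be proved.
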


\begin{proof}
As indicated in the statements, we only need to prove (i). In this case, by Theorem \ref{SecureTotalLexicographic} we know that $\gamma_{(1,1)}^s(C_n\circ H)=\gamma_{(1,1,0)}^s(C_n)$. As shown in \cite{w-domination}, $\gamma_{(2,1,0)}(C_n)=\left\lceil \frac{2n}{3}\right\rceil$ and $\gamma_{(2,2,0)}(C_n)=2\left\lceil \frac{n}{3}\right\rceil$. Moreover, by Theorem \ref{PreliminaryBounds-w-Secure}  (ii) and Corollary \ref{CorollarySecureSumaUno} we deduce that $\gamma_{(2,1,0)}(C_n)\leq \gamma_{(1,1,0)}^s(C_n)\leq \gamma_{(2,2,0)}(C_n)$.  Therefore, $\gamma_{(1,1,0)}^s(C_n)\le 2\left\lceil \frac{n}{3}\right\rceil$ and, if $n\equiv 0,2\pmod 3$, then $\gamma_{(1,1,0)}^s(C_n)=2\left\lceil \frac{n}{3}\right\rceil$.

 From now on, we consider that $n\equiv 1\pmod 3$ with $n\geq 10$, as the cases $n=4$ and $n=7$ are very  easy to check. Let $f(V_0,V_1,V_2)$ be a $\gamma_{(1,1,0)}^s(C_n)$-function and let $V(C_n)=\{x_0,\ldots , x_{n-1}\}$, where consecutive vertices are adjacent and the addition of subscripts is taken modulo $n$. If there exists $x_i\in V(C_n)$ such that $f(x_i)=f(x_{i+1})=0$, then $\gamma_{(1,1,0)}^s(C_n)=\gamma_{(1,1,0)}^s(P_n)$ and we derived the result by Proposition \ref{(110)s-caminos}. Hence, we assume that for any $x_i\in V_0$ we have that $f(x_{i-1})>0$ and $f(x_{i+1})>0$. From this fact, and considering that $f$ is a secure $(1,1,0)$-dominating function, we deduce that  $f(\{x_j,x_{j+1}, x_{j+2}\})\geq 2$ for any $x_j\in V(C_n)$. Now, we consider the following two cases.

\vspace{.2cm}

\noindent
Case $1$. $V_2\neq \emptyset$. Without loss of generality, we suppose that $x_0\in V_2$ and let $S_i=\{x_{3i+1}, x_{3i+2}, x_{3i+3}\}$ for $i\in \{0, \ldots ,\frac{n-4}{3}\}$. Notice that $f(S_i)\geq 2$ for every  $i\in \{0, \ldots ,\frac{n-4}{3}\}$, which implies that
$$\gamma_{(1,1,0)}^s(C_n)=\omega(f)\geq \sum_{i=0}^{\frac{n-4}{3}}f(S_i)+f(x_0)\geq \frac{2(n-1)}{3}+2=2\left\lceil \frac{n}{3}\right\rceil.$$
This implies that $\gamma_{(1,1,0)}^s(C_n)=2\left\lceil \frac{n}{3}\right\rceil.$

\vspace{.2cm}

\noindent
Case $2$. $V_2= \emptyset$. In this case, if there exist four consecutive vertices with weight one, namely without loss of generality $x_1,x_2,x_3,x_4$,  then we deduce that
$$\gamma_{(1,1,0)}^s(C_n)=\omega(f)\geq \sum_{i=5}^{n}f(x_i)+f(\{x_1,x_2,x_3,x_4\})\geq \frac{2(n-4)}{3}+4=2\left\lceil \frac{n}{3}\right\rceil.$$

From now on we assume that for any group of four consecutive vertices, at least one vertex has weight equal to zero.
Moreover, notice that $|V_0|\geq 2$ as $n\geq 10$ and $\gamma_{(1,1,0)}^s(C_n)\le 2\left\lceil \frac{n}{3}\right\rceil.$
 Without loss of generality, let $x_0\in V_0$. and suppose that $x_{1}\in M_f(x_0)$. This implies that $f(x_1)=f(x_2)=f(x_3)=1$ and $f(x_4)=0$, and so $f(x_5)=f(x_6)=1$.

If $f(x_7)=1$, then we have that
$$\gamma_{(1,1,0)}^s(C_n)=\omega(f)\geq \sum_{i=8}^{n}f(x_i)+f(\{x_1,x_2, \ldots ,x_7\})\geq \frac{2(n-7)}{3}+6=2\left\lceil \frac{n}{3}\right\rceil,$$

Now, assume that $f(x_7)=0$. Thus, $n\geq 13$ and $x_8\in M_f(x_7)$, which implies that 
$f(x_8)=f(x_9)=f(x_{10})=1$. Hence,
$$\gamma_{(1,1,0)}^s(C_n)=\omega(f)\geq \sum_{i=11}^{n}f(x_i)+f(\{x_1,x_2,\ldots , x_{10}\})\geq \frac{2(n-10)}{3}+8=2\left\lceil \frac{n}{3}\right\rceil,$$ 
which completes the proof.
\end{proof}

\end{document}